\documentclass[11pt]{amsart}

\usepackage[top=3cm, bottom=3cm, left=2.6cm, right=2.6cm]{geometry}

\usepackage{tikz}
\usetikzlibrary{arrows, positioning}

\usepackage{graphicx,psfrag}
\usepackage{amsfonts,amsthm,amsmath,amssymb,latexsym, amscd, euscript,enumerate}
\usepackage{graphicx,color}
\usepackage[all]{xy}
\usepackage{epsfig}
\usepackage{labelfig}
\usepackage{mathrsfs}

\usepackage{epstopdf}
\theoremstyle{plain}
\newtheorem{thm}{Theorem}[section]

\newtheorem{prop}[thm]{Proposition}
\newtheorem{lem}[thm]{Lemma}

\newtheorem{rem}[thm]{Remark}

\newtheorem{definition}[thm]{Definition}

\title[Quadratic differentials and the dual pants graph]{Quadratic differentials and random walks on the dual graph of a pants decomposition}

\thanks{The first author is grateful to the Institute for Advanced Study and its unique environment. He acknowledges support from the James D. Wolfensohn Fund.}
\thanks{The second author was partially supported by a PSC-CUNY grant 68118-00 56.}
	\thanks{The third author was partially supported by National Science Foundation award DMS-2521870, Simons Collaboration Grant award 00012869, and 
 PSC-CUNY grant 67366-00 55. The third author is grateful to the Institute for Advanced Study and AMIAS
Member Fund for their support.}

\author{Charles Bordenave, Xinlong Dong and Dragomir \v Sari\' c}
\date{\today}                                          

\begin{document}

\begin{abstract}
Let $X$ be an infinite Riemann surface with an upper-bounded geodesic pants decomposition. 
The vertices of the corresponding dual graph $\mathcal{G}$ are pairs of pants and edges are cuffs with conductances equal to their lengths. We prove that the geodesic flow on $X$ is ergodic if and only if the random walk on $\mathcal{G}$ is recurrent. 
 This yields explicit criteria 
 for deciding, in terms of cuff-length growth,  whether the geodesic flow is ergodic. We provide concrete and new families of Riemann surfaces with an explicit understanding of the phase transitions from recurrent to non-recurrent geodesic flows. In addition, we show that rough isometry of surfaces does not preserve the ergodicity of the geodesic flow while rough isometry of their dual graphs does.

The above equivalence result uses
a characterization of the measured geodesic laminations on $X$ that arise as straightened horizontal foliations of finite-area holomorphic quadratic differentials. The conditions on the measured laminations are translated into the conditions on the existence of a square summable flow function on $\mathcal{G}$.
 \end{abstract}

\maketitle

\section{Introduction}

Let $X=\mathbb{H}/\Gamma$ be a Riemann surface with the Fuchsian group $\Gamma$ not finitely generated. 
The Riemann surface $X$ supports a unique conformal hyperbolic metric, and $X$ can be decomposed into a union of countably many geodesic pairs of pants with possibly adding at most countably many funnels and geodesic half-planes (see \cite{AR,BS}). The hyperbolic metric on $X$ is determined by the Fenchel-Nielsen parameters: the lengths of the boundary geodesics (called {\it cuffs}) and the twists on them.

\vspace{5pt}
{\em The type problem. } A Riemann surface $X$ that does not support a Green's function is closest to compact Riemann surfaces ($X\in O_G$). In fact, $X\in O_G$ is equivalent to many other natural properties shared by compact surfaces such as: the Brownian motion is recurrent, the Poincar\' e series for $\Gamma$ is divergent, the geodesic flow on the unit tangent bundle $T^1X$ is ergodic for the Liouville measure, the Dirichlet problem has a solution at infinity, etc (see \cite{AhlforsSario,Nicholls1,tsuji,Sullivan}). The classical type problem is to determine whether an explicitly given Riemann surface $X$ satisfies $X\in O_G$ (see \cite{AhlforsSario}). 

We consider the Fenchel-Nielsen parameters for the geodesic pants decomposition of $X$, and find a necessary and sufficient condition for $X\in O_G$. This condition is expressed in terms of a random walk on the dual graph $\mathcal{G}$ to the pants decomposition. More precisely, let $\mathcal{G} = (V,E,\ell_X)$ denote the conductance graph whose vertices $V$ are pairs of pants of the pants decomposition and whose edges $e \in E$ are the cuffs with lengths $(\ell_X(e))_{e \in E}$. The graph $\mathcal G$ (see Figure \ref{fig:graph}) may have self loops (an edge between a vertex to itself) and multiple edges (two edges between the same pair of vertices).  
\begin{figure}[h]
\leavevmode \SetLabels
\endSetLabels
\begin{center}
\AffixLabels{\centerline{\epsfig{file=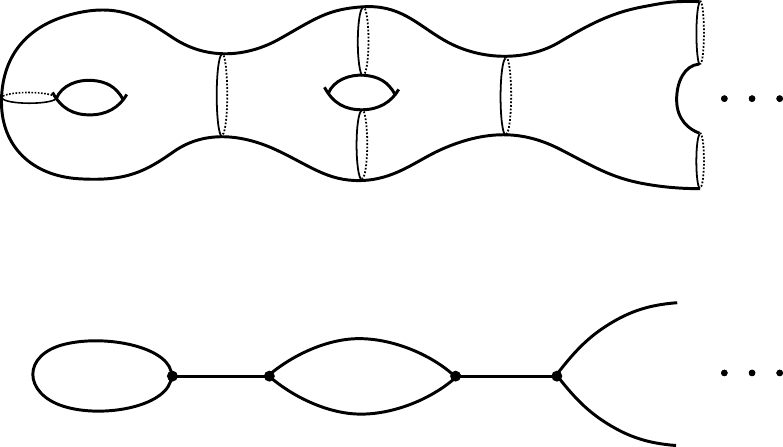,width=6.0cm,angle=0} }}
\vspace{-20pt}
\end{center}
\caption{The dual graph to the pants decomposition.} 
\label{fig:graph}
\end{figure}

For $(x,y) \in V^2$, we set $\ell_X(x,y) = \ell_X(y,x) = \sum_{e \in x \cap y} \ell_X(e) $ where the sum is over all edges $e$ between $x$ and $y$ (if $x = y$, we count $\ell_{X} (e)$ twice by convention). By construction $\ell_{X} (x,y) = 0$ unless $x$ and $y$ are adjacent in $\mathcal G$, and  $\sum_{y \in V} \ell_X(x,y) > 0$ for all $x \in V$. The random walk on $\mathcal G$  is the Markov chain on $V$ whose transition probability is given for all $x,y \in V$ by
$$
P(x,y)=\frac{\ell_X(x,y)}{\sum_{z \in V}\ell_X(x,z)}.
$$

Our main result relates $X\in O_G$ to the recurrence of the random walk on $\mathcal{G}$ weighted by the cuff lengths. In this way, recurrence of Brownian motion on $X$ and ergodicity of the geodesic flow on $T^1X$  are translated into an explicit combinatorial–geometric condition.

\begin{thm}
\label{thm:graph}
Let $X$ be an infinite Riemann surface with an upper-bounded geodesic pants decomposition and associated dual graph $\mathcal{G} = (V,E,\ell_X)$. 
The Brownian motion on $X$ is recurrent if and only if the random walk on $\mathcal{G}$ is recurrent.
\end{thm}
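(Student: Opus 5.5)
The plan is to use the characterization of $O_G$ via extremal length and reduce both sides to modulus computations. On the surface side, $X\in O_G$ holds exactly when the extremal distance from a compact subsurface $K$ (say, a finite union of pants) to the ideal boundary is infinite, equivalently when the Dirichlet capacity of $K$ relative to infinity vanishes. On the graph side, recurrence of the random walk with conductances $\ell_X(e)$ is equivalent, by Nash-Williams/Royden type results, to the vanishing of the effective conductance from a finite vertex set to infinity. This is in turn equivalent to the absence of a unit flow to infinity with finite energy $\sum_e \theta(e)^2/\ell_X(e)$. I will therefore compare the capacity of $K$ in $X$ with the effective conductance of the corresponding vertex set in $\mathcal G$, up to multiplicative constants that depend only on the upper bound $M$ for the cuff lengths.

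\textbf{Step 1: surface recurrent implies graph recurrent.} I will transfer graph functions to surface test functions. Given a finitely supported $u$ on $V$ with $u=1$ on the vertices of $K$, I build a function $F$ on $X$. Each cuff $e$ has a standard collar $C_e$ of width $w(\ell)\asymp\log(1/\ell)$, and its modulus (height over circumference) is comparable to $\ell^{-1}\log(1/\ell)$ for small $\ell$ and bounded for $\ell\le M$. Set $F$ equal to $u(x)$ on the complement of the collars in pants $x$. Across the collar $C_e$ joining $x$ and $y$, let $F$ interpolate linearly in the distance coordinate. The Dirichlet energy is then $\sum_e (u(x)-u(y))^2\,\mathrm{mod}^{-1}$, with $\mathrm{mod}^{-1}\asymp \ell_X(e)/\log(1/\ell_X(e))\lesssim \ell_X(e)$. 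Hence the surface capacity is at most a constant times the graph capacity. This direction gives graph recurrent $\Rightarrow$ $X\in O_G$, and the upper bound $M$ is used to control the thick parts.

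\textbf{Step 2, the main obstacle: the reverse inequality.} Thin collars carry surface energy $\ell/\log(1/\ell)$, which is much smaller than $\ell$, so Step 1 alone cannot be reversed. I would instead use the dual formulation and follow the abstract: produce a finite-area holomorphic quadratic differential, or a harmonic measured foliation, from a square-summable flow on $\mathcal G$. Suppose $\mathcal G$ is transient. Take a unit flow $\theta$ to infinity with $\sum_e\theta(e)^2/\ell_X(e)<\infty$, and define a measured lamination $\lambda$ that crosses each cuff $e$ with transverse measure $\theta(e)$. Inside each pair of pants, the measure is distributed by arcs between cuffs, which is possible by Kirchhoff's law at the vertex. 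I then invoke the paper's characterization of which measured laminations are straightened horizontal foliations of finite-area quadratic differentials, expecting a condition of the form $\sum_e \theta(e)^2/\ell(e)<\infty$ together with a bounded-geometry condition. This yields a quadratic differential $\varphi$ with $\|\varphi\|_1<\infty$ whose horizontal foliation leaves every compact set with positive flux. A nonzero finite-area quadratic differential of this kind gives a harmonic function with finite Dirichlet integral and nonzero flux, or equivalently shows that extremal length to the ideal boundary is finite, so $X\notin O_G$.

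The delicate part is building the square-summable flow condition into $\|\varphi\|_1$. A lamination crossing a thin collar of cuff length $\ell$ with weight $\theta$ contributes area roughly $\theta^2\cdot(\text{collar height})/\ell$ only if leaves cross transversally. To keep the cost at $\theta^2/\ell$, the leaves must twist so as to run nearly parallel to the core in the thin collar. I would try first to show that the characterization theorem delivers exactly the weight $\theta(e)^2/\ell(e)$, which matches the graph energy. Assuming this, combining with Step 1 and the equivalence of $O_G$ with recurrence of Brownian motion proves the theorem.
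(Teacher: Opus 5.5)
Your Step 1 rests on a wrong picture of the collar, and the error matters. In the logarithmic coordinates of the paper, the standard collar $\mathcal{C}(\alpha)$ is a flat cylinder. Its circumference is $\ell=\ell_X(\alpha)$ and its height is $2\vartheta$, where $\cos\vartheta=\tanh(\ell/2)$. So $2\vartheta\in[c(M),\pi)$, and the modulus (height over circumference) is $\asymp 1/\ell$, not $\ell^{-1}\log(1/\ell)$. The hyperbolic width $\asymp\log(1/\ell)$ is not the conformal height, because the circumference grows like $\ell\cosh\rho$; indeed the boundary curves have length at least $2$ by Lemma \ref{lem:bdry_collar_projection}. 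The interpolation must therefore be linear in the angular coordinate. That choice costs $\ell\,(u(x)-u(y))^2/(2\vartheta)\asymp\ell\,(u(x)-u(y))^2$, which is exactly Lemma \ref{lem:collar-modulus}. Linear interpolation in hyperbolic distance instead costs $\asymp(u(x)-u(y))^2/\log^2(1/\ell)$ for small $\ell$. That is far larger than $\ell\,(u(x)-u(y))^2$ and would break the comparison. With this correction, Step 1 is a valid proof of ``graph recurrent $\Rightarrow X\in O_G$''; note that your heading states the converse. This argument differs from the paper's, which proves the same direction contrapositively. The paper uses the harmonic measure $g$ of $X\setminus P_x$ and the lamination $\mu_g\in ML_{int}(X)$ from \cite[Theorem 4.2]{Saric23}. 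It then applies the estimate of Proposition \ref{prop:upper-bdd} to turn $\mu_g$ into a finite-energy flow.

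The genuine gap is in Step 2. Theorem \ref{thm:realization} requires $\sum_n [\,i(\mu,\alpha_n)^2/\ell_X(\alpha_n)+\ell_X(\alpha_n)\,i(\mu,\beta_n)^2\,]<\infty$, and you neither identify nor control the twist term. Your heuristic also points the wrong way. With $k_n$ extra twists that term is of order $\ell_X(\alpha_n)k_n^2\theta(e_n)^2$, whereas with the correct collar geometry a straight crossing already costs only $O(\theta(e_n)^2/\ell_X(\alpha_n))$. The missing step is the paper's construction. Smooth the train track in the same direction on each side of every cuff, and give the cuff edges the smallest weights compatible with the switch conditions. Then $i(\mu_\theta,\alpha_n)\le\theta(e_n)$ and $i(\mu_\theta,\beta_n)\le 2\theta(e_n)$, so $\ell_X(\alpha_n)\,i(\mu_\theta,\beta_n)^2\le 4M^2\theta(e_n)^2/\ell_X(\alpha_n)$. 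You must also handle the source pants $P_x$, where Kirchhoff's law fails. Finally, you have to actually prove that leaves of transverse measure at least $1$ escape to infinity before applying \cite[Theorem 1.1]{Saric23}; the paper does this with the orientation induced by the flow and Poincar\'e recurrence on an exhaustion.

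Alternatively, the obstacle you describe is an artifact of the miscomputed modulus. Collars have conformal conductance $\asymp\ell$, so Step 1 can be reversed directly. Given a test function $F$, let $u(x)$ be its average over the thick part of $P_x$. Use collars slightly narrower than the standard ones, so that the thick parts have uniformly bounded geometry; the standard collars of a pants with short cuffs nearly touch along the seams. Cauchy--Schwarz on the cylinder gives $\ell_e(m_1-m_2)^2\le\pi\,\mathcal{D}_{\mathcal{C}_e}(F)$, where $m_1,m_2$ are the averages of $F$ over the two boundary curves. A uniform Poincar\'e--trace inequality on the thick parts then compares $m_i$ with $u(x)$ and $u(y)$. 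Summing gives $\sum_e\ell_e\,(u(e^+)-u(e^-))^2\le C(M)\,\mathcal{D}_X(F)$. Together with the corrected Step 1, this proves the theorem without quadratic differentials.
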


For surfaces with positive injectivity radius, Theorem \ref{thm:graph} is proved in \cite{Saric-ft}. We note that the conclusion of Theorem \ref{thm:graph} is false if we remove the assumption the pants decomposition has an upper-bounded geodesic lengths. The twists of cuffs can play an important role without this assumption. For example, given a surface with one topological end and an arbitrary rate of increase of boundary lengths, there are choices of twists that make the Brownian motion recurrent and other choices that make the Brownian motion transient (see \cite{HPS}).

It is interesting to compare Theorem \ref{thm:graph} with Lyons and Sullivan \cite{LyonsSullivan}. There, the authors consider a discrete locally finite cover of a connected manifold. They define a Markov chain on this cover with the property that the Brownian motion on the manifold is recurrent if and only if the Markov chain is recurrent, see with \cite[Theorem 5-6]{LyonsSullivan}. This classical result holds in much more general setting than Theorem \ref{thm:graph}. The caveat of \cite{LyonsSullivan} is that the jump transitions of the Markov chain on the cover are not explicit  and are not local. Theorem \ref{thm:graph} shows that for hyperbolic surfaces, it is possible to design such Markov chain with very explicit and local jump transitions.

Kanai \cite{Kanai} and Varopoulos \cite{Var} proved that two Riemannian manifolds with bounded Ricci curvature and positive injectivity radius that are roughly isometric simultaneously have recurrent Brownian motions or do not. We give a short proof of their theorem (see Theorem \ref{thm:kv}). 
Theorem \ref{thm:graph} allows us to understand rough isometries between Riemann surfaces without lower bounds on the lengths of closed geodesics. Somewhat unexpectedly, we establish (see Theorem \ref{thm:kanai_ce})

\begin{thm}
\label{thm:rough-isom}
Rough isometries of Riemann surfaces without a lower bound on cuff lengths do not preserve recurrence of Brownian motions.
\end{thm}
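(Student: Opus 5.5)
We will construct two Riemann surfaces $X_1$ and $X_2$, each with an upper-bounded geodesic pants decomposition, together with a rough isometry $X_1\to X_2$, such that the Brownian motion on $X_1$ is recurrent and on $X_2$ it is transient. Both directions of Theorem \ref{thm:graph} will be used, so recurrence on each surface reduces to recurrence of the random walk on its dual graph with conductances given by cuff lengths. The key observation is that the hyperbolic metric barely sees short cuffs, while the conductances see them linearly. A collar around a cuff of length $\ell\to 0$ has width about $2\log(1/\ell)$, but it can be crossed along seams, whose lengths are controlled by the Fenchel--Nielsen data. We should therefore arrange that pinching a cuff changes distances by only a bounded amount, while changing its conductance by an unbounded factor.

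\emph{Construction.} Take a fixed combinatorial graph $\mathcal{G}_0$ that is a ladder-like ``tube''. For instance, let $X$ have one end and be built from a chain of pairs of pants, with level $n$ consisting of $k_n$ pants. Start from a surface $X_2$ whose dual graph is transient, say with all cuffs of length $1$ and a tree-like or $\mathbb{Z}^3$-like dual graph. Then obtain $X_1$ from $X_2$ by shrinking selected cuffs $e$ to lengths $\ell(e)\ll 1$, chosen so that the weighted graph becomes recurrent. By the Nash-Williams criterion, it is enough to find disjoint cutsets $\Pi_n$ with $\sum_n \big(\sum_{e\in\Pi_n}\ell_{X_1}(e)\big)^{-1}=\infty$. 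So we choose the cuffs in the cutsets $\Pi_n$ to have total length about $1/n$.

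A cleaner alternative is to keep all cuffs bounded and not shrink them all. Instead, near each short cuff we insert a short ``bypass'' cuff configuration so that the surfaces stay roughly isometric, while the conductances differ. The first attempt, however, is the simple one above: replace each cuff of length $1$ by a cuff of length $\ell_n$.

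\emph{Rough isometry.} The main obstacle is proving that $X_1$ and $X_2$ are roughly isometric even though the collars in $X_1$ become very long. A pinched cuff has a collar of width $\sim 2\log(1/\ell_n)$, and paths crossing it are forced to cross this distance. If the distortion must be bounded, we cannot simply pinch cuffs that separate the surface into levels. So we take $\ell_n$ decreasing only along cuffs that can be bypassed. Concretely, each cutset cuff $e$ in $X_1$ is paired with a parallel route of bounded length through neighbouring pants whose cuffs have length $\asymp1$; a genus handle attached alongside will do. Distances then change by at most a uniform additive constant, because any geodesic crossing a long collar can be rerouted through the handle at bounded cost. The map sending each pair of pants of $X_2$ to the corresponding one of $X_1$, realized via vertices of the dual graphs, is then a quasi-isometry with dense image.

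We then need the bypass cuffs to contribute little to the conductance across the cutset. This is arranged by making the bypass route itself go through cuffs whose lengths, summed over the cutset, remain small. One way to do this is to make the bypass pass through many short cuffs in series: series resistance adds, while the metric detour stays bounded if the bypass pants are thin. If a bounded-length bypass does not suffice, we accept a logarithmic detour and use only a sparse sequence of levels, so that the map is still a rough isometry at the scale of the dual graph. Verifying the Nash-Williams sum on $X_1$ and transience on $X_2$ (for example, by comparison with a fixed transient graph with unit conductances) is routine. Theorem \ref{thm:graph} then finishes the proof.
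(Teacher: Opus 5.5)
There is a genuine gap. You never actually obtain the rough isometry, and the mechanisms you propose for it conflict with the recurrence you need.

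Your opening claim is false. You say a thin collar ``can be crossed along seams, whose lengths are controlled by the Fenchel--Nielsen data.'' In fact any path crossing a cuff $\alpha$ of length $\ell$ must traverse the whole standard collar, of width $2\sinh^{-1}(1/\sinh(\ell/2))\approx 2\log(1/\ell)$, and seams are no exception. Your Nash--Williams step forces $\ell(e)\to 0$ on the cutsets $\Pi_n$. So if $X_1$ is obtained from $X_2$ only by shrinking these cuffs, the pants-to-pants correspondence has unbounded additive distortion: adjacent pants across a pinched cuff are at distance $O(1)$ in $X_2$ and $\gtrsim\log(1/\ell(e))$ in $X_1$. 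Taking only a sparse sequence of levels does not help, because the error is local to each pinched level.

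The bypass repair fails either way.
\begin{enumerate}[(i)]
\item A bounded-length bypass through pants with cuffs of length $\asymp 1$ gives each pinched edge of the dual graph a parallel bounded-length path of conductance-$\asymp 1$ edges. By Rayleigh monotonicity (Theorem~\ref{thm:R}), $\mathcal G_1$ then dominates a network roughly isometric to $\mathcal G_2$ with unit conductances, so $X_1$ is transient too.
\item Routing the bypass through short cuffs reintroduces long collars, so ``thin pants'' and ``bounded detour'' are incompatible.
\item A logarithmic detour simply reproduces the distortion you were trying to avoid.
\end{enumerate}
Changing cuff lengths on essentially the same combinatorial pants structure cannot produce the counterexample; the topology must change in the opposite way, by removing handles rather than adding them.

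The missing idea is to exploit the gap between the metric length $\asymp\log(1/\ell)$ of a thin collar and its resistance $1/\ell$. You replace each thin collar by a piece of comparable metric length whose resistance is only linear in that length. The paper proceeds as follows.
\begin{enumerate}[(i)]
\item Take the Cantor tree surface $X$ with $\ell_X(\alpha_e)=2^{-n}$ for $e\in E_n$. It is recurrent by Theorem~\ref{thm:pandazis-extended}, or directly by Nash--Williams since $\ell_X(E_n)=1$.
\item Build $Y$ by replacing each collar around $\alpha_e$, $e\in E_n$, which has width $\asymp n$, by a linear chain of $n$ two-holed tori with all cuffs of length $1$. Each pair of pants is also replaced by one with unit cuffs.
\item Both surfaces are roughly isometric to the same metric graph whose level-$n$ edges have length $\asymp n$, so $X$ and $Y$ are roughly isometric.
\item By the series law, the dual graph of $Y$ reduces to a binary tree with resistance $\asymp n$ on level-$n$ edges. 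This is transient since $\sum_n n2^{-n}<\infty$, and Theorem~\ref{thm:graph} gives $Y\notin O_G$.
\end{enumerate}
Read in your direction, this means starting from a transient unit-cuff surface containing long chains of handles and collapsing each chain of $n$ handles into a single cuff of length $\approx e^{-cn}$, not attaching extra handles.
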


Nevertheless, we prove that rough isometries of the pair of pants decomposition preserves the type problem for Riemann surfaces  with upper-bounded geodesic pants decomposition (see \cite[Chapter 2]{LyonsPeres} for the definition of rough embeddings of graphs). 

\begin{thm}
\label{thm:rough-isom2}
Let $X,X'$ be two infinite Riemann surfaces which have an upper-bounded geodesic pants decompositions $\mathcal{G}, \mathcal{G}'$. If $\mathcal G$ and $\mathcal G'$ are roughly isometric, then the Brownian motion on $X'$ is recurrent if and only if the Brownian motion on $X$ is recurrent.
\end{thm}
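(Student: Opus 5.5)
By Theorem \ref{thm:graph}, the Brownian motion on $X$ is recurrent if and only if the random walk on $\mathcal G=(V,E,\ell_X)$ is recurrent, and likewise for $X'$ and $\mathcal G'$. It therefore suffices to show that recurrence of the random walk is invariant under rough isometries between the two conductance graphs. The plan is to invoke the standard theorem for networks (see \cite[Chapter 2]{LyonsPeres}): if two networks have \emph{bounded geometry}, then a rough isometry between them preserves transience. Bounded geometry here means uniformly bounded degrees and conductances bounded above and below by positive constants. The whole proof then reduces to putting $\mathcal G$ and $\mathcal G'$ in a form where this theorem applies.

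First, degrees. Every pair of pants has at most three cuffs, so every vertex of $\mathcal G$ has degree at most $3$, counting self loops twice and multiple edges separately. Collapsing multiple edges into a single edge with conductance $\ell_X(x,y)$ does not change the random walk. Conductances are bounded above by the upper-bounded hypothesis, $\ell_X(e)\le M$.

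Second, the lower bound, which is the main obstacle. No lower bound on cuff lengths is assumed, and Theorem \ref{thm:rough-isom} suggests that small cuffs really matter. I expect, however, that in this theorem the rough isometry is between the graphs as weighted networks, with the conductances $\ell_X$ part of the structure, as in the rough-embedding framework of \cite[Chapter 2]{LyonsPeres}. That framework compares energies of flows: a rough embedding $\phi:\mathcal G\to\mathcal G'$ must satisfy $d'(\phi x,\phi y)\le \alpha d(x,y)+\beta$, and each edge $e$ must be sent to a path whose resistances sum to at most $\alpha\, r(e)$, where $r=1/\ell$. Each edge of $\mathcal G'$ must also be used by at most $\alpha$ such paths. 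Under this notion the argument follows the proof of Theorem 2.17 in \cite{LyonsPeres}. Suppose the walk on $\mathcal G$ is transient, so by the Royden/T.~Lyons criterion there is a unit flow $\theta$ from a vertex to infinity with finite energy $\sum_e r(e)\theta(e)^2<\infty$. Push $\theta$ forward along the image paths to obtain a flow $\theta'$ on $\mathcal G'$. By Cauchy--Schwarz and the bounded multiplicity of image paths, its energy is at most $C\sum_e r(e)\theta(e)^2$. Since $\phi$ is a rough isometry, hence proper, $\theta'$ is a unit flow to infinity, so $\mathcal G'$ is transient. Applying the same argument to the rough inverse gives the converse.

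If instead "roughly isometric" is meant purely for the underlying unweighted graphs, I would first verify whether recurrence reduces to the unweighted graph under upper-bounded cuffs. Raising conductances makes the walk more transient, so with $\ell\le M$, recurrence of the unweighted graph implies recurrence of $\mathcal G$; the reverse direction needs a lower bound on conductances. I would handle this by reading the hypothesis as rough isometry of networks in the sense of \cite{LyonsPeres}, which is what the weighted energy estimate above uses. Combined with Theorem \ref{thm:graph}, this yields the theorem.
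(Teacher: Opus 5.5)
Your proposal is correct and follows the paper's route: Theorem \ref{thm:graph} reduces the claim to the dual conductance graphs, and you then invoke the network version of Kanai's theorem from \cite[\S 2.6]{LyonsPeres}, which the paper simply cites and you additionally re-derive by pushing forward a finite-energy unit flow along the image paths. You read the hypothesis as rough equivalence of weighted networks, with rough embeddings in both directions; this is the intended reading and it is genuinely needed, since for the unweighted graphs the statement fails (e.g.\ Cantor tree surfaces with cuff lengths $2^{-n}$ versus constant cuff lengths have the same underlying tree but opposite types).
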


Theorem \ref{thm:rough-isom2} is in sharp contrast with Theorem \ref{thm:rough-isom}. Rough isometries at the level of the pair of pants decomposition gives a finer notion of equivalence classes for Riemann surfaces than rough isometries of the surfaces.

 Theorem \ref{thm:graph} reduces the study of the type problem of Riemann surfaces with upper-bounded cuff lengths pants decomposition to the study of the type problem for the random walk on a conductance graph. This is an extremely classical question in probability theory which has been extensively studied. There are multiple tools to determine transience or recurrence of such Markov chain such as Royden's criterion, Rayleigh's monotonicity,  the Nash-Williams criterion, network reduction or rough isometries, we refer to \cite[Chapter 2]{LyonsPeres} for a modern exposition.

 \begin{figure}[h]
\leavevmode \SetLabels
\endSetLabels
\begin{center}
\AffixLabels{\centerline{\epsfig{file=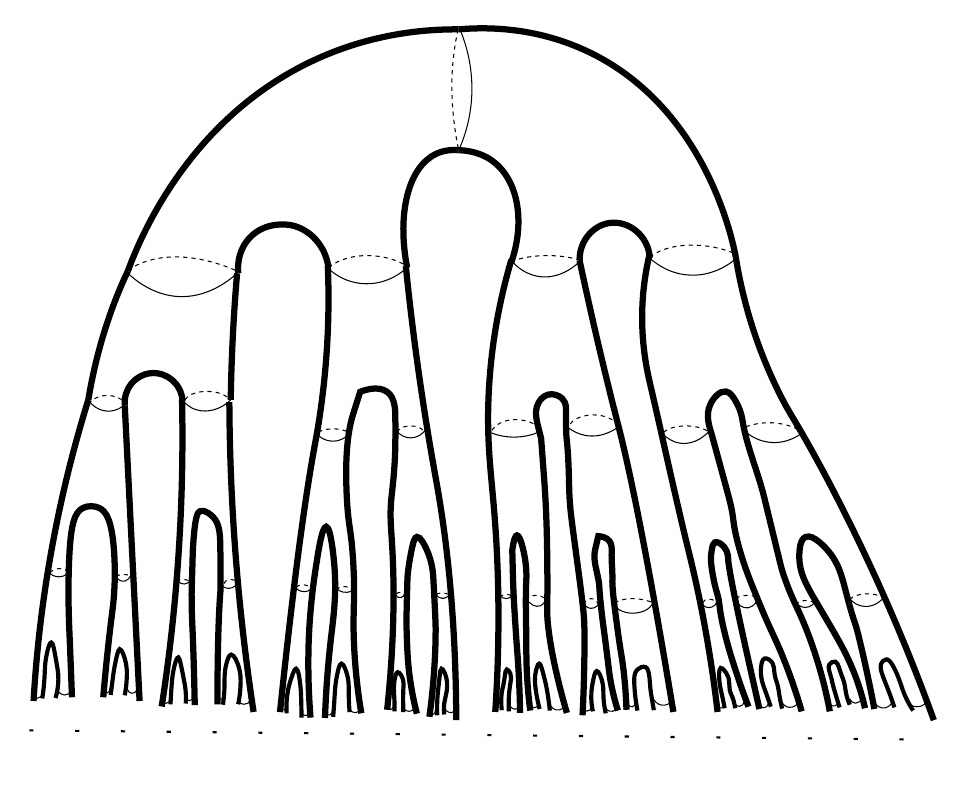,width=6.0cm,angle=0} }}
\vspace{-20pt}
\end{center}
\caption{The pants decomposition of the Cantor tree surface.} 
\label{fig:cantor}
\end{figure}

Theorem \ref{thm:graph} provides us with means to find explicit families of Riemann surfaces that are and are not in $O_G$ which complements known results. We will illustrate this on two families of surfaces in Section \ref{sec:typeex}, many more examples could be investigated. One example is a planar Riemann surface $X_C$ obtained by gluing pairs of pants such that the end space is a Cantor set, see Figure \ref{fig:cantor} for an illustration. The associated conductance graph $\mathcal G_C = (V,E,\ell_{X_C})$ is an infinite $3$-regular tree. We pick some reference cuff as an origin and, for integer $n \geq 1$, we denote by $E_n \subset E$ the subset of cuffs at graph distance $n-1$ from this reference cuff. Then the Riemann surface $X_C \in O_G$ (see \cite[Theorem 10.3]{BHS}) if for some $C>0$ and all $e \in E_n$
$$
\ell_{X_C}(e)\leq C\frac{n}{2^n}.
$$

In the opposite direction, Pandazis \cite{pan} proved that $X_C \notin O_G$ if for some $r>1$, $C_1>0$, $C_2>0$ and all $e \in E_n$
$$
C_1\frac{n^r}{2^n}\leq \ell_{X_C}(e)\leq C_2\frac{1}{n^2}.
$$
Using Theorem \ref{thm:graph}, we can give better streamlined proofs and improve significantly the above results. We will notably prove the following theorem:

 \begin{thm}
 	\label{thm:pandazis-extended}
 	Let $X_C$ be the Cantor tree surface as shown in Figure \ref{fig:cantor} such that $\sup_{e \in E} \ell_{X_C}(e)  < \infty$. If there exists a positive sequence $(\psi(n))_{n \in \mathbb N}$ with $\sum_n \psi(n)^{-1} < \infty$ such that  
  $$
  \hbox{ for all $n \geq 1$ and $e \in E_n$, }  \quad  \frac{\psi(n)}{2^n} \leq \ell_{X_C}(e) 
  $$  
    then $X \notin O_G$. Conversely, $X \in O_G$ if  $\sum_n \psi(n)^{-1} = \infty$ and 
       $$
  \hbox{ for all $n \geq 1$ and $e \in E_n$, }  \quad  \ell_{X_C}(e) \leq \frac{\psi(n)}{2^n}.
  $$    
In particular, if some $C \geq 1$, for all $n \geq 1$, $e ,  f \in E_n$, $$ \frac 1 C   \leq   \frac{\ell_{X_C}(e)}{\ell_{X_C}(f)} \leq C,$$ then $X \in O_G$ if and only if $\sum_n \ell_{X_C}(E_n) ^{-1} = \infty$, where $\ell_{X_C}(E_n)  = \sum_{e \in E_n} \ell_{X_C} (e)$.
 \end{thm}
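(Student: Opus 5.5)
By Theorem \ref{thm:graph}, whose hypothesis holds because $\sup_e \ell_{X_C}(e)<\infty$, the question reduces to recurrence or transience of the random walk on the $3$-regular tree $\mathcal G_C$ with conductances $c(e)=\ell_{X_C}(e)$. I will therefore work entirely with electrical networks: the walk is transient if and only if the effective resistance from a root vertex to infinity is finite (see \cite[Chapter 2]{LyonsPeres}). Fix as root the edge $e_0$ (the reference cuff) together with one of its endpoints. Then $E_n$ consists of $|E_n|\asymp 2^{n}$ edges, and each edge of $E_n$ has exactly two children in $E_{n+1}$, up to a bounded number of initial levels and a constant factor coming from the two sides of $e_0$.

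\emph{Transience.} Assume $\ell(e)\ge \psi(n)/2^n$ for $e\in E_n$ with $\sum\psi(n)^{-1}<\infty$. I will build a unit flow $\theta$ from the root to infinity that splits equally at each vertex, so $\theta(e)\asymp 2^{-n}$ on $E_n$. Its energy is
\[
\sum_n\sum_{e\in E_n}\frac{\theta(e)^2}{\ell(e)}\lesssim \sum_n 2^n\cdot 2^{-2n}\cdot\frac{2^n}{\psi(n)}=\sum_n\psi(n)^{-1}<\infty .
\]
Finite energy gives transience, hence $X_C\notin O_G$ by Theorem \ref{thm:graph}.

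\emph{Recurrence.} Assume $\ell(e)\le\psi(n)/2^n$ with $\sum\psi(n)^{-1}=\infty$. The sets $E_n$ are disjoint cutsets separating the root from infinity, so the Nash-Williams criterion gives
\[
R_{\mathrm{eff}}\ge\sum_n\Big(\sum_{e\in E_n}\ell(e)\Big)^{-1}\ge \sum_n\Big(2^n\cdot\frac{\psi(n)}{2^n}\Big)^{-1}=\sum_n \psi(n)^{-1}=\infty
\]
(up to the constant in $|E_n|\asymp 2^n$). Infinite resistance gives recurrence, so $X_C\in O_G$.

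\emph{The comparable-length case.} Suppose $\ell(e)/\ell(f)\in[C^{-1},C]$ for $e,f\in E_n$. Set $\psi(n)=\ell(E_n)$. Then every $e\in E_n$ satisfies $\ell(e)\asymp \ell(E_n)/|E_n|\asymp\psi(n)/2^n$, with constants depending only on $C$. If $\sum\ell(E_n)^{-1}<\infty$, apply the first part with $\psi/C'$; if the sum diverges, apply the second part with $C'\psi$. Rescaling $\psi$ by a constant does not affect convergence of $\sum\psi^{-1}$, and the constant enters each estimate above only as a bounded factor, so both parts still apply and the equivalence follows.

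The only real care needed is the bookkeeping: checking that $|E_n|=2^{n}$ up to a fixed constant with the given indexing of $E_n$ (distance $n-1$ from the reference cuff), and that the equal-splitting flow is well defined from the chosen root. Neither is a genuine obstacle. The substantive input is Theorem \ref{thm:graph}, which may be assumed.
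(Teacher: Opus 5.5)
Your proof is correct, and the recurrence half is exactly the paper's argument: Nash--Williams with $\Pi_n=E_n$. (Strictly speaking, $E_1=\{e_0\}$ is not a cutset for a root that is an endpoint of $e_0$, since a path can leave through the other two edges at that vertex. Start the sum at $n=2$; nothing changes.)

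For transience you take a different route from the paper.

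\emph{Your argument.} You exhibit the equal-splitting unit flow, with $\theta(e)\asymp 2^{-n}$ on $E_n$. You then invoke the finite-energy (Royden/T.~Lyons) criterion directly under the lower bound $\ell(e)\geq\psi(n)/2^n$.

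\emph{The paper's argument.} It first uses Rayleigh monotonicity to replace the conductances by the level-constant values $\psi(n)/2^n$. It then observes that the distance process $D_t$ from the root is itself a Markov chain on $\mathbb N$ with conductances $\psi(n+1)$. Finally it quotes the classical birth--death criterion: this chain is transient if and only if $\sum_n\psi(n)^{-1}<\infty$.

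\emph{Comparison.} Your version needs neither a symmetrization step nor a projection. It is also the same flow mechanism the paper later uses, via random paths, for the $\mathbb Z^d$ case. The paper's projection instead gives an exact one-dimensional reduction. For the level-constant network that reduction yields both directions at once, which makes the sharpness of the condition $\sum\psi^{-1}$ transparent.

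The two arguments are really the same computation. On a level-constant tree your equal-splitting flow is the unit current flow, and its energy is, up to constants, the effective resistance $\sum_n\psi(n)^{-1}$.
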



It is interesting to study the extent to which Theorem \ref{thm:pandazis-extended} provides a classification of $O_G$. Let $(\psi(n))_n$ be a positive sequence such that $\sum_n \psi(n)^{-1} = \infty$. One question raised by Ilia Binder and Michael Yampolsky is whether some percentage of the lengths $\ell_{X_C}(e)$, $e \in E_n$, can be made larger than $\psi(n) / 2^n$ and remain in $O_G$. In this direction, we prove the following.
\begin{thm}
\label{prop:shorten}
Let $X_C$ be a Cantor tree surface as shown in Figure \ref{fig:cantor} such that $\sup_{e \in E} \ell_{X_C}(e)  <  \infty$. Let $(\psi(n))_n$ be a positive sequence such that $\sum_n \psi(n)^{-1} = \infty$. For some $\alpha>0$ and all $n \in \mathbb N$, let $K(n) = \lceil n ( \log n )^\alpha \rceil$.  Finally, for $n \geq 1$, we denote by $A_n \subset E_n$ the set of $e \in  E_n$ such that 
$$
\ell_{X_C}( e)   > \frac{\psi(n)}{2^n}.
$$
The following holds: 
\begin{enumerate}[(i)]
\item If $\alpha \leq 1 $ and $ \psi(n) + |A_n| \leq K(n)$ for all $n$ large enough, then $X \in O_G$.

\item If $\alpha >  1$, there exists $A \subset E$ with $|A \cap E_n| \leq K(n)$ such that if $\ell_{X_C}(e) = 1$ for all $e \in A$ then $X \notin O_G$.

\item There exists $A \subset E$  with $|A \cap E_n| \sim (1/3) |E_n |$ such that, if $\limsup_n \psi(n+1)/\psi(n) < \infty$ and $A_n \subset A \cap E_n$, then $X \in O_G$.
\end{enumerate}
\end{thm}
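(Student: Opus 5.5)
By Theorem \ref{thm:graph}, all three items reduce to recurrence or transience of the random walk on the $3$-regular tree $\mathcal G_C$ with conductances $c(e)=\ell_{X_C}(e)$. The two tools will be the Nash-Williams criterion, for recurrence, and a finite-energy unit flow to infinity via Royden/Lyons, for transience, exactly as in the proof of Theorem \ref{thm:pandazis-extended}. Since $\sup_e \ell_{X_C}(e)<\infty$, raising conductances to a bounded value only moves us toward transience, and all estimates reduce to controlling resistances $r(e)=1/c(e)$ level by level.

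For (i), I would use Rayleigh monotonicity: replace every $e\in A_n$ by the maximal conductance $M=\sup\ell_{X_C}$, and every other $e\in E_n$ by $\psi(n)/2^n$. It suffices to show that this larger network is recurrent. Nash-Williams with the cutsets $E_n$ is too crude, since $M|A_n|$ may dominate. Instead, I would cut the tree above each $A$-edge as follows. For each generation $n$, the edges of $A_n$ lie below at most $|A_n|$ vertices, so one can choose disjoint cutsets $\Pi_n$ consisting of $E_n\setminus(\text{descendants of } A)$ together with a controlled set of edges. A cleaner route is a short-circuit argument. Take a ray-packet bound: the effective resistance to infinity is at least $\sum_n (\sum_{e\in\Pi_n} c(e))^{-1}$. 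Choose $\Pi_n=E_n$ but regroup levels in dyadic blocks, so that the conductance of level $n$ is at most $\psi(n)+M|A_n|\le (1+M)K(n)$. Since $\sum 1/(n(\log n)^\alpha)=\infty$ for $\alpha\le 1$, Nash-Williams gives recurrence. The hypothesis $\psi(n)+|A_n|\le K(n)$ is exactly tailored to this, so I expect (i) to be straightforward.

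For (ii), with $\alpha>1$, I would choose $A$ to be the edge set of a sparse subtree $T\subset\mathcal G_C$ whose generation sizes are $|T\cap E_n|\le K(n)$ and which branches so that a uniform flow on $T$ has finite energy. Concretely, I would let $T$ branch once at generations $n_k$ chosen so that $|T\cap E_n|\approx n(\log n)^{\alpha}$. The unit flow splitting equally at branch points then has energy $\sum_n |T\cap E_n|^{-1}\approx\sum_n 1/(n(\log n)^\alpha)<\infty$. Since the conductances on $T$ equal $1$, the flow lies in $\ell^2$, and transience follows for any choice of the remaining lengths by monotonicity. The one check needed is that binary branching can realize a growth profile $\lceil n(\log n)^\alpha\rceil$ within a $3$-regular tree; this holds because the profile grows subexponentially, so we branch only when needed.

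For (iii), I would take $A$ to be the set of edges that go to a designated child of each vertex, for instance the "left" child under a fixed planar order. Its density in $E_n$ is $\approx 1/3$, counting the root edges appropriately. The key point is that every infinite ray eventually uses non-$A$ edges with positive frequency, or else it follows the $A$-edges as the unique all-left ray at each vertex. This is the main obstacle: proving recurrence despite the bounded conductances on $A$. My plan is to collapse each maximal left-path, which contracts the $A$-edges, a change that can only decrease resistance, so contraction is the worst case. The contracted tree at level $n$ has about $2^{n}\cdot(2/3)$ edges with conductance $\le\psi(n)/2^n$ summing to $\approx\psi(n)$, though levels get merged. I would then use $\limsup\psi(n+1)/\psi(n)<\infty$ to show that the merged levels' conductance is $\lesssim\sum_{m\ge n}\psi(m)\,(\text{geometric weights})\lesssim C\psi(n)$, because an $A$-chain of length $j$ starting at level $n$ carries $2^{-j}$ of the mass. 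Nash-Williams then gives divergence of $\sum 1/\psi(n)$.
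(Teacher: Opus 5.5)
\textbf{Parts (i) and (ii).} These are fine and follow the paper. In (i) you end up with exactly its argument: Nash--Williams with $\Pi_n=E_n$ and $\ell(E_n)\le\psi(n)+M|A_n|\le(1+M)K(n)$. Your earlier remark that the cutsets $E_n$ are ``too crude'' is wrong, and no regrouping of levels is needed. In (ii), your doubling subtree with a uniform unit flow of energy $\sum_n|T\cap E_n|^{-1}$ is the paper's construction. The only difference is that you compute the flow energy directly, where the paper reduces by the series law to a birth--death chain.

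\textbf{Part (iii) has a genuine gap, and it lies in the choice of $A$.} The left-child edges have density $1/2$ in $E_n$, not $1/3$. More seriously, they form infinite rays: from every vertex, the all-left ray uses only $A$-edges. So no cutset consists of short edges, and long cuffs on $A$ can carry flow to infinity.

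Your way around this is the bound $\sum_{j\ge 0}\psi(n+j)2^{-j}\lesssim C\psi(n)$ for the merged levels. That bound needs $\psi$ to grow at ratio less than $2$, which $\limsup_n\psi(n+1)/\psi(n)<\infty$ does not give. Separately, contracting infinite rays is not resistance-monotone in general: contracting a transient ray to a point makes it recurrent.

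In fact the conclusion is false for this $A$. Put $\ell=1$ on left edges and $\ell(e)=\psi(n)/2^n$ on right edges of $E_n$. Take $k_{i+1}=3k_i$ and define $\psi$ as follows:
\begin{itemize}
\item $\psi(n)=2^n$ for $n\in(k_i,\tfrac54 k_i]$;
\item $\psi$ is multiplied by $4$ at each level from $\tfrac12 k_{i+1}$ to $k_{i+1}$, rising from $1$ to $2^{k_{i+1}}$;
\item $\psi=1$ otherwise.
\end{itemize}
Then $\sup\ell\le 1$, $A_n\subset A\cap E_n$, $\limsup\psi(n+1)/\psi(n)=4$, and $\sum_n\psi(n)^{-1}=\infty$, since the stretches where $\psi=1$ have length $k_i/4$.

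Now route a unit flow from the root. On the levels $(k_i,\tfrac54k_i]$, where all conductances equal $1$, split equally at every vertex; elsewhere go straight down left edges. Let $m_i$ be the number of carrying vertices at the start of the $i$-th block. That block costs at most $1/m_i$ and ends with $m_{i+1}=m_i2^{k_i/4}$ carrying vertices. Afterwards the flow rides $m_{i+1}$ left rays at cost $1/m_{i+1}$ per level, up to depth $k_{i+1}$. The total energy is $\lesssim\sum_i\bigl(1/m_i+k_{i+1}/m_{i+1}\bigr)<\infty$. So the walk is transient and $X\notin O_G$, although all hypotheses of (iii) hold.

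\textbf{The missing idea.} Choose $A$ with no two edges adjacent, that is, a matching $M$. Any infinite simple path uses adjacent edges in $E_{2n}$ and $E_{2n+1}$, and these cannot both lie in $M$. Hence $\Pi_n=(E_{2n}\cup E_{2n+1})\setminus M$ are disjoint cutsets made entirely of short edges, with $\ell(\Pi_n)\le\psi(2n)+\psi(2n+1)\le C\psi(2n)$. Bounded ratio is exactly what this step needs, and Nash--Williams along the even (or odd) indices gives recurrence. For a perfect matching, $|M_n|+|M_{n+1}|=2^n$, which yields $|M_n|\sim\frac13|E_n|$.
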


Theorem \ref{prop:shorten}  could be considered as a negative result: unexpected behavior may occur when the ratio $\max_{e,f\in E_n} \ell_{X_C} (e)/ \ell_{X_C} (f)$ diverges. Statement (ii) implies that few long cuffs can destroy parabolicity while statement (iii) implies that many long cuffs could not be sufficient to destroy recurrence. 

Note that Theorem \ref{thm:rough-isom2} implies that Theorem \ref{thm:pandazis-extended} and Theorem \ref{prop:shorten}  do extend to any infinite Riemann surface which has a pant decomposition which is roughly equivalent to the pant decomposition considered in Figure \ref{fig:cantor}.

In Section \ref{sec:typeex}, we also illustrate Theorem \ref{thm:graph} on surfaces which admit a pair of pants decomposition graph $\mathcal G$ which is roughly isometric to the standard $\mathbb Z^d$-lattice. We notably provide a new proof of a classical theorem of Lyons-Sullivan \cite{LyonsSullivan} for the type problem for the topological $\mathbb Z^d $-covers.  We also prove analogs of Theorem \ref{thm:pandazis-extended} and Theorem \ref{prop:shorten}  in this case also. We postpone the precise statements to Section \ref{sec:typeex}.



\vspace{5pt}
{\em Finite-area holomorphic quadratic differential. } The connection of $X\in O_G$ to the random walk on $\mathcal{G}$ in Theorem \ref{thm:graph} is uncovered through the finite-area holomorphic quadratic differentials on $X$. 
Holomorphic quadratic differentials are fundamental objects in Teichm\"uller theory (for example, see \cite{GL,Markovic,Sagman,Wolf}), hyperbolic geometry (see \cite{AhlforsSario,Saric23}), and surface dynamics (see \cite{DHV}). On compact Riemann surfaces, the classical theorem of Hubbard and Masur \cite{HubbardMasur} asserts that every measured foliation (or, every measured lamination) is realized uniquely as the horizontal foliation of a finite-area holomorphic quadratic differential. For infinite Riemann surfaces, however, this correspondence fails in general: not every measured lamination arises from an integrable quadratic differential (see \cite{Saric-heights}), and the obstruction is closely tied to the large-scale geometry of the surface.


A finite-area holomorphic quadratic differential on $X$ induces a measured geodesic lamination by straightening the leaves of its horizontal foliation. While this construction always yields an injective map into the space of measured laminations (see Marden-Strebel \cite{MardenStrebel} and \cite{Saric-heights}), the image $ML_{int}(X)$ consists only of those laminations that satisfy additional quantitative constraints (see \cite[Theorem 1.6]{Saric23} and \S \ref{sec:qd_ml}). Our objective is to characterize this image purely in terms of hyperbolic geometry, expressed through intersection numbers with the cuffs of a pants decomposition.

Let $X$ be an infinite Riemann surface and let $\{\alpha_n\}_n$ be an arbitrary ordering of  the cuffs (i.e., boundary geodesics) of a geodesic pants decomposition of $X$. We denote by $\{\beta_n\}_n$ the family of simple closed geodesics that are pairwise disjoint, each $\beta_k$ intersects $\alpha_k$ in a minimal number of points and does not intersect $\alpha_n$ for $n\neq k$, and the length of $\beta_k$ is minimal among these choices (see Figure \ref{fig:1}). Given a measured lamination $\mu$ on $X$, we denote by $i(\mu ,\alpha_n)$ and $i(\mu ,\beta_n)$ its intersection numbers with $\alpha_n$ and $\beta_n$ respectively, which are purely hyperbolic geometry data.
Our result gives a complete characterization of the image of the space of finite-area holomorphic quadratic differentials in terms of such data.

\begin{thm}
\label{thm:realization}
Let $X$ be an infinite Riemann surface equipped with an upper-bounded geodesic pants decomposition. A measured geodesic lamination $\mu$ on $X$ belongs to $ML_{\mathrm{int}}(X)$ if and only if
$$
\sum_{n=1}^{\infty}\Big{[}\frac{i(\mu,\alpha_n)^2}{\ell_X(\alpha_n)}+i(\mu,\beta_n)^2\ell_X(\alpha_n)\Big{]} <\infty .
$$
\end{thm}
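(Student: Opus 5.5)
We plan to use the characterization of $ML_{int}(X)$ from \cite[Theorem 1.6]{Saric23}. That result says a measured lamination $\mu$ is realized by a finite-area holomorphic quadratic differential if and only if it is realized by a finite-area \emph{measurable} (Borel) quadratic differential, i.e.\ a partial measured foliation of finite area, whose straightening is $\mu$. Equivalently, $\mu\in ML_{int}(X)$ if and only if there is a Borel family of disjoint ``strips'' realizing $\mu$ with finite total Euclidean area. Both directions of Theorem \ref{thm:realization} then reduce to local estimates on each standard collar and each pair of pants, summed over the decomposition. The upper bound $\ell_X(\alpha_n)\le M$ makes every pair of pants uniformly controlled away from its short cuffs.

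\emph{Sufficiency.} Assume the series converges. We build a measurable finite-area foliation piece by piece. In the standard collar $C_n$ around $\alpha_n$, of width $\asymp\log(1/\ell_X(\alpha_n))$:
\begin{itemize}
\item Arcs of $\mu$ crossing $\alpha_n$ carry total transverse measure $i(\mu,\alpha_n)$.
\item They twist around $\alpha_n$ by an amount measured by $i(\mu,\beta_n)$, roughly $i(\mu,\beta_n)\approx i(\mu,\alpha_n)\,|t_n| + (\text{mass of leaves parallel to }\alpha_n)$.
\end{itemize}
Uniformize $C_n$ as a flat annulus of circumference $\ell_X(\alpha_n)$ and height $\asymp 1/\ell_X(\alpha_n)$; its conformal modulus is $\asymp 1/\ell_X(\alpha_n)$. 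Realize the crossing arcs as straight lines of slope chosen to absorb the twist, and realize the weight on $\alpha_n$ itself as closed leaves. A direct Euclidean area computation (area $=\int|\text{horizontal measure}|\,|\text{vertical measure}|$) then gives a collar area of
$$\lesssim \frac{i(\mu,\alpha_n)^2}{\ell_X(\alpha_n)}+i(\mu,\beta_n)^2\ell_X(\alpha_n).$$
This is the standard extremal-length computation for a cylinder, and it is where the two terms come from.

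Outside the collars, each pair of pants minus its collars has bounded geometry. There the arcs of $\mu$ form at most three families of homotopy classes of arcs, with weights bounded by the $i(\mu,\alpha)$'s of its cuffs. They can be realized by a foliation of area $\lesssim \sum i(\mu,\alpha)^2\lesssim \sum i(\mu,\alpha)^2/\ell_X(\alpha)$, using $\ell_X\le M$. The pieces must be glued matching transverse measures on the collar boundaries; this is possible with measurable foliations since we do not need holomorphy. Summing over all pieces, the finite total area gives $\mu\in ML_{int}(X)$ by \cite[Theorem 1.6]{Saric23}.

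\emph{Necessity.} Let $\varphi$ be a finite-area holomorphic differential with straightened horizontal lamination $\mu$. For each $n$ we lower-bound the $\varphi$-area in $C_n$ and in a neighborhood of $\beta_n$ by the two terms, then use disjointness of collars, with bounded overlap for the $\beta_n$ neighborhoods. For the first term, use the length–area inequality on $C_n$: every core curve of $C_n$ has $\varphi$-vertical length at least $i(\mu,\alpha_n)$. Integrating over the family of core curves, whose extremal length is $\asymp \ell_X(\alpha_n)$, gives $\|\varphi\|_{C_n}\gtrsim i(\mu,\alpha_n)^2/\ell_X(\alpha_n)$. For the second term, use the family of arcs crossing $C_n$ homotopic to the portion of $\beta_n$ in the collar, together with bounded pieces in the adjacent pants. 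Their $\varphi$-heights are at least roughly $i(\mu,\beta_n)$ minus contributions from the bounded parts. The extremal length of this family is $\asymp 1/\ell_X(\alpha_n)$, so the area is $\gtrsim i(\mu,\beta_n)^2\ell_X(\alpha_n)$, after controlling the error terms by the already-summable $i(\mu,\alpha)^2$ terms.

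\emph{Main obstacle.} The main obstacle is this last point. The intersection $i(\mu,\beta_n)$ is a global quantity, while the height of a crossing arc in the collar sees only the twist part. We must show that the discrepancy is controlled by the intersections with the neighboring cuffs $i(\mu,\alpha_m)$ (with bounded constants thanks to the upper bound on cuff lengths), and hence absorbed by the first sum. The analogous matching issue in the sufficiency gluing is handled the same way.
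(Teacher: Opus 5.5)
Your proof works, but by a different route from the paper for necessity, and one step needs the repair described below.

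\textbf{Sufficiency.} This is the paper's strategy at a much coarser resolution: foliate each collar with the twisted crossing leaves, join collars by strips in the pants, sum Dirichlet integrals, and invoke \cite[Theorem 1.6]{Saric23}. That theorem needs a finite Dirichlet integral of a proper partial foliation, which is what your ``area'' actually computes. Two costs you wave through are exactly what Lemmas \ref{lem:collar-foliation}, \ref{lem:connector} and \ref{lem:trapezoid-foliation} estimate:
\begin{itemize}
\item funnelling the leaves that arrive on short arcs of $\partial\mathcal{C}(\alpha_n)$ into the twisted band and back out;
\item matching strip widths across $\partial\mathcal{C}(\alpha_n)$.
\end{itemize}
``Possible since we do not need holomorphy'' gives existence, not the energy bound. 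Your closed-leaf treatment of weight on $\alpha_n$ itself covers a case that Lemma \ref{lem:collar-foliation} excludes by hypothesis.

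\textbf{Necessity: a dual route.} The paper applies Cauchy--Schwarz and Beurling's criterion to families of \emph{horizontal trajectories}, so it needs \emph{upper} bounds on their moduli: $\le C\ell_X(\alpha_n)$, respectively $\le C/\ell_X(\alpha_n)$. For $\beta_n$ this forces the split $\Sigma_n^1\cup\Sigma_n^2$, the erasing of inessential arcs, the lifting picture controlled by \eqref{eq:xy}, and a quasiconformal change of twist. You instead apply length--area to \emph{test curves} in the homotopy classes of $\alpha_n$ and $\beta_n$. You use only that such curves have $\varphi$-height at least the intersection number, so you need only \emph{lower} bounds on moduli, which come from exhibiting an annulus. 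For $\alpha_n$ this is immediate and cleaner than the paper: the core curves of $\mathcal{C}(\alpha_n)$ have modulus $2\theta_n/\ell_X(\alpha_n)$ with $\theta_n$ bounded below, so $i(\mu,\alpha_n)^2/\ell_X(\alpha_n)\le C\int_{\mathcal{C}(\alpha_n)}|\varphi|$. The paper's route, in exchange, works with the leaves that actually carry the measure.

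\textbf{The $\beta_n$ term.} The obstacle you flag comes from cutting $\beta_n$ into collar-crossing arcs plus connectors. Your remedy is not justified: the height of a fixed connector is a property of $\varphi$, not an intersection number of $\mu$. There is no reason for it to be bounded by the neighbouring $i(\mu,\alpha_m)$; what controls it is local $|\varphi|$-area. (In the sufficiency direction the analogous discrepancy is combinatorial in $\mu$ and is so bounded; compare the paper's $b\le i(\mu,\beta_n)$.)

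Instead, do not cut $\beta_n$:
\begin{itemize}
\item Take an embedded annulus $T_n\subset P$ following $\beta_n$ and meeting $\mathcal{C}(\alpha_n)$ in one or two rectangles whose width is a fixed fraction of $\ell_X(\alpha_n)$ in $\log z$ coordinates.
\item By Lemma \ref{lem:bdry_collar_projection} these rectangles have hyperbolic width $\asymp 1$ at $\partial\mathcal{C}(\alpha_n)$. Each adds $\asymp 1/\ell_X(\alpha_n)$ to the extremal length of the core curves of $T_n$, and the rest of $T_n$ adds $O(1/\ell_X(\alpha_n))$ (see below). Hence these core curves have modulus $\ge c\,\ell_X(\alpha_n)$.
\item Each core curve is homotopic to $\beta_n$, so has height $\ge i(\mu,\beta_n)$. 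Length--area gives $\ell_X(\alpha_n)\, i(\mu,\beta_n)^2\le C\int_{T_n}|\varphi|$, with no error terms and no twist normalization. Bounded overlap of the $T_n$ finishes the sum.
\end{itemize}

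\textbf{Two geometric corrections.}
\begin{itemize}
\item Pants minus standard collars do not have uniformly bounded geometry. The piece of $\beta_n$ in an adjacent pants, like any leaf of $\mu$ returning to $\alpha_n$, must cross the seam joining the other two cuffs. When those cuffs are short, their standard collars are only about $\ell_X(\alpha_n)^2/16$ apart along that seam.

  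Passing this neck costs $O(1/\ell_X(\alpha_n))$ in extremal length, and $O(w^2/\ell_X(\alpha_n))$ in energy for a strip of weight $w$. So your final estimates survive, though not your intermediate bound $\lesssim\sum i(\mu,\alpha)^2$ for pieces kept outside the standard collars. You must either check this or work with slightly shrunken collars.
\item In $\log z$ coordinates the collar has circumference $\ell_X(\alpha_n)$ and height $2\theta_n<\pi$, not height $\asymp 1/\ell_X(\alpha_n)$. Only the modulus $\asymp 1/\ell_X(\alpha_n)$ enters, so this is just a slip.
\end{itemize}
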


If $\liminf_{n}\ell_X(\alpha_{n})> 0$ then this theorem is proved in \cite{Saric-ft}. Being able to extend the theorem to small cuffs is the main technical advancement and it required new tools. 
To prove the necessity of the above condition, we assume that $\mu$ is a straightening of the horizontal foliation of a finite-area holomorphic quadratic differential $\varphi$ on $X$. The first part $\sum_n \frac{i(\mu,\alpha_n)^2}{\ell_X(\alpha_n)}<\infty$ follows by the estimates of the modulus of the horizontal leaves of $\varphi$ connecting the boundaries of the standard collar around $\alpha_n$ (hence the appearance of $\ell_X(\alpha_n)$). The second part $\sum_ni(\mu,\beta_n)^2\ell_X(\alpha_n)<\infty$ is more subtle and somewhat surprising (see Proposition \ref{prop:upperbdd-beta_n} and Remark \ref{rem:int_nec}). The standard collar around $\beta_n$ has a small width and yields a weak necessary condition that is not sufficient. Instead, we divide the family of horizontal arcs that intersect $\beta_n$ into two subfamilies: the one that intersect $\beta_n$ and connect to some $\alpha_k$ for $k\neq n$ and the one that connects $\beta_n$ to itself (see Proposition \ref{prop:upperbdd-beta_n} and Figure \ref{fig:lifts}). Therefore, while the intersection $i(\mu ,\beta_n)$ (which measures the twist amount of $\mu$ around $\alpha_n$) may go to infinity, the rate is tempered by the size of the cuff $\ell_X(\alpha_n)$. 

To prove the sufficiency of the above condition, we construct a partial measured foliation on $X$ whose straightening (in homotopy) gives the measured lamination $\mu$. We then prove that the partial measured foliation has a finite Dirichlet integral. By \cite[Theorem 1.6]{Saric23}, the partial measured foliation as well as the measured lamination $\mu$ is represented by the horizontal foliation of a unique finite-area holomorphic quadratic differential. The construction of the partial measured foliation from the data of $\mu$ is proceeding by constructing the partial foliation in the collar neighborhoods of $\alpha_n$ (see Lemma \ref{lem:collar-foliation} and Figure \ref{fig:subfoliation}) and connecting the collars by rectangle neighborhoods. The construction in the collars for surfaces without a lower bound on the cuff lengths is challenging since the twisting of $\mu$ can be large for small lengths of $\alpha_n$. In addition, the asymmetry in the sizes of adjacent collars necessitates connecting intervals of different sizes with partial foliation whose Dirichlet integral is controlled. We used the idea of shrinking the collars via a quasiconformal map (see Lemma \ref{lem:trapezoid-foliation} and Figure \ref{fig:shrinking}) to create space for this. Assembling these local constructions yields a global, partial measured foliation on $X$, and the estimates show that the Dirichlet integral is finite.

In contrast with the Hubbard–Masur theorem for compact surfaces and earlier realizability results for infinite surfaces of bounded geometry (see \cite{Saric-ft}), the present work shows that the presence of arbitrarily short geodesics introduces new summability phenomena that governs both realizability of laminations and global analytic behavior.

We sketch how Theorem \ref{thm:realization} is used to prove Theorem \ref{thm:graph}. Assume that the random walk on $\mathcal{G}$ is transient. Then there exists a flow function on $\mathcal{G}$ which is in the square summable function space. We define weights on the standard train-track on $X$ (corresponding to the pants decomposition) from the values of the flow function that satisfy the summability condition from Theorem \ref{thm:realization}. Then the measured lamination (given by the weights on the train-track) can be realized by the horizontal foliation of a finite-area holomorphic quadratic differential on $X$. Since the weights were given by the flow function, it follows that a non-zero set of horizontal leaves go to infinity, and thus $X\notin O_G$. 

In the opposite direction, assume that $X\notin O_G$. Then there exists a real-valued harmonic function on $X\setminus P$ that is $0$ on the boundary $\partial P$ of $P$ and going to $1$ towards the infinity $\partial_{\infty}X$ of $X$, where $P$ is a fixed pair of pants. By taking the differential of the local holomorphic extension of the harmonic function, we obtain an Abelian differential on $X\setminus P$ whose square has a finite area. 
The horizontal foliation of the Abelian differential on $X\setminus P$ induces a flow function on $\mathcal{G}$ of finite energy which implies that the random walk is transient.

Theorem \ref{thm:realization} may be of independent interest in Teichm\" uller theory since the space of finite-area holomorphic quadratic differentials on $X$ is the pre-dual space to the tangent space to the Teichm\"uller space $T(X)$ at the basepoint (see \cite{GL}). In addition, the Teichm\"uller geodesics in $T(X)$ are given by affine maps in the natural parameters of finite-area holomorphic quadratic differentials (see \cite{GL}).

\vspace{5pt}
{\em Organization of the paper. } The rest of this work is organized as follows. 
In Section \ref{sec:modulus} we gather standard properties of the modulus of curves. In Section \ref{sec:qd_ml}, we introduce the correspondence between foliations and laminations, and prove the only if part of Theorem \ref{thm:realization}. In Section \ref{sec:suff} we complete the proof of Theorem \ref{thm:realization}. 
Theorem \ref{thm:graph} is proved in Section \ref{sec:rw-type}. Theorem \ref{thm:rough-isom} and Theorem \ref{thm:rough-isom2} are proved in Section \ref{sec:rough}. We also provide a short proof of the Kanai-Varopoulos theorem for bounded pants decomposition. In Section \ref{sec:typeex}, we study example of applications of Theorem \ref{thm:graph}, we notably prove Theorem \ref{thm:pandazis-extended} and Theorem \ref{prop:shorten}.

\section{The modulus of collars}
\label{sec:modulus}

Let $\Delta$ be a curve family on a Riemann surface $X$. A {\it conformal metric} on a Riemann surface $X=\mathbb{H}/\Gamma$ is given by $\rho (z)|dz|$ in local charts, where $\rho (z)\geq 0$ is Borel measurable. A conformal metric is {\it allowable} for $\Delta$ if
$$
\int_{\delta}\rho (z)|dz|\geq 1
$$
for all $\delta\in\Delta$. 

\begin{definition}
The modulus of a curve family $\Delta$ is given by
$$
\mathrm{mod}\Delta =\inf_{\rho} \int_X\rho (z)^2dxdy,
$$
where the infimum is over all allowable metrics $\rho$ for $\Delta$.
\end{definition}

 Given a simple closed geodesic $\alpha$ on $X$, denote by $\ell_X(\alpha )$ the hyperbolic length of $\alpha$. The {\it standard collar} around $\alpha$ is defined by
$$
\mathcal{C}(\alpha )=\Big{\{} z\in X:d_{hyp}(z,\alpha )<\sinh^{-1}\frac{1}{\sinh \ell_X(\alpha )/2}\Big{\}} ,
$$
where $d_{hyp}$ is the hyperbolic distance. 

\begin{lem}[Maskit \cite{maskit}, and also \cite{BHS}]
\label{lem:collar-modulus}
Let $\alpha$ be a simple closed geodesic on a Riemann surface $X$ and let $\Delta (\alpha )$ be the family of all curves in the standard collar $\mathcal{C}(\alpha )$ that connect its two boundary components. Given $M>0$ there exists $C>0$ such that if
$\ell_X(\alpha )\leq M$ then
$$
\mathrm{mod}[\Delta (\alpha )]\leq C\ell_X(\alpha ).
$$
\end{lem}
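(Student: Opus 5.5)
The plan is to compute the modulus of the collar directly in explicit coordinates and then bound it. I would lift the collar $\mathcal{C}(\alpha)$ to the upper half-plane so that the lift of $\alpha$ is the imaginary axis and the deck transformation is $z\mapsto e^{\ell}z$, where $\ell=\ell_X(\alpha)$. In polar coordinates $z=re^{i\theta}$ the hyperbolic distance to the imaginary axis depends only on $\theta$. Indeed, $\cosh d = 1/\sin\theta$, equivalently $\sinh d=\cot\theta$ measured from $\theta=\pi/2$. The collar is therefore the quotient of a sector $\{\theta_0<\theta<\pi-\theta_0\}$ by the dilation. Here $\theta_0$ is determined by $\cot\theta_0=\sinh w$, with $w=\sinh^{-1}(1/\sinh(\ell/2))$, so $\cot\theta_0 = 1/\sinh(\ell/2)$.

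Next I would use the conformal map $\log z=\log r+i\theta$. It sends this quotient to a flat annulus, namely the rectangle $[0,\ell]\times(\theta_0,\pi-\theta_0)$ with its vertical sides identified. The curves in $\Delta(\alpha)$ join the two boundary components, which are the horizontal sides. For a Euclidean cylinder of circumference $\ell$ and height $h=\pi-2\theta_0$, the family of curves joining the boundaries has modulus exactly $\ell/h$. To prove the upper bound it suffices to exhibit an allowable metric. I would take the constant metric $\rho=1/h$: any joining curve has Euclidean length at least $h$, so $\rho$ is allowable, and its area integral is $\ell h/h^2=\ell/h$. Modulus is a conformal invariant, so this gives $\mathrm{mod}[\Delta(\alpha)]\le \ell/(\pi-2\theta_0)$.

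It remains to show that $\pi-2\theta_0$ is bounded below uniformly when $\ell\le M$. We have $\tan\theta_0=\sinh(\ell/2)\le\sinh(M/2)$, so $\theta_0\le\arctan\sinh(M/2)<\pi/2$. Hence $\pi-2\theta_0\ge \pi-2\arctan\sinh(M/2)>0$. This gives the constant $C=1/(\pi-2\arctan\sinh(M/2))$. The only step needing care is checking the distance formula and the identification of the collar boundary with $\theta=\theta_0$. That is a standard hyperbolic-trigonometry calculation, so I do not expect a real obstacle. As $\ell\to0$ the height tends to $\pi$ and the bound becomes $\mathrm{mod}\approx\ell/\pi$, consistent with the expected behavior.
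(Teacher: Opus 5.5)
Your proof is correct and is essentially the paper's approach: the paper gives no argument of its own, citing \cite{maskit} and \cite{BHS}, and your computation is the standard one behind those references. You use the same $\log z$ rectangle $[0,\ell]\times[\frac{\pi}{2}-\theta,\frac{\pi}{2}+\theta]$ (with $\ell=\ell_X(\alpha)$) that the paper uses in the proof of Lemma \ref{lem:collar-foliation}, and you get $\mathrm{mod}[\Delta(\alpha)]\le \ell/\bigl(2\arctan(1/\sinh(\ell/2))\bigr)$, relying only on the collar lemma (embeddedness of $\mathcal{C}(\alpha)$ as the sector quotient), which the statement already presupposes.
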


\section{From quadratic differentials to measured laminations}
\label{sec:qd_ml}
Let $X=\mathbb{H}/\Gamma$ be an infinite Riemann surface with the Fuchsian group $\Gamma$ of the first kind. A {\it geodesic pants decomposition} of $X$ is a partition of $X$ into geodesic pairs of pants that meet along their boundary geodesics, called {\it cuffs}. Each pair of pants is allowed to have punctures as its boundary components. 

\begin{definition}
We say that $X$ has an {\it upper-bounded} pants decomposition with cuffs $\{\alpha_n\}_n$ if $\limsup_{n} \ell_X(\alpha_n) < \infty$. 
\end{definition}

Note that this definition allows that $\liminf_{n} \ell_X(\alpha_{n})=0$. 

A {\it measured (geodesic) lamination} $\mu$ on $X$ is a geodesic lamination together with a Radon measure on arcs transverse to the geodesic lamination; a leaves-preserving homotopy between two transverse arcs preserves the measures. Denote by $ML(X)$ the space of all measured lamination on $X$. 

A {\it holomorphic quadratic differential} $\varphi$ on $X$ is an assignment of a holomorphic function in charts with the invariance $\varphi (z)dz^2$. The area form $|\varphi (z)|dxdy$ is well-defined and we say that $\varphi$ has finite area if $\int_X |\varphi |<\infty$. In addition, the {\it natural parameter} of $\varphi$ is defined by $w=\int_{z_0}^z\sqrt{\varphi (z)}dz$. If domains of two natural parameters $w_1,w_2$ overlap, then $w_1=\pm w_2+const$ in the intersection of the domains. Therefore, the horizontal (vertical) leaves are well-defined and maximal extensions of the horizontal (vertical) leaves are called the horizontal (vertical) trajectories.

Let $A(X)$ be the space of all finite area holomorphic quadratic differentials on $X$. Given $\varphi\in A(X)$, denote by $\mu_{\varphi}$ the corresponding measured geodesic lamination obtained by straightening the leaves of the horizontal foliation of $\varphi$ into homotopic hyperbolic geodesics and pushing forward the transverse measure (see \cite[\S 3.2]{Saric-heights}). The map $A(X)\to ML(X)$ given by $\varphi\mapsto\mu_{\varphi}$ is injective (see \cite{Saric-heights}), but it is not onto. 

The image $ML_{int}(X)$ of $A(X)$ is characterized in terms of the measured partial foliations with finite Dirichlet integral as follows (see \cite{Saric23}). 

\begin{definition}
Let $X=\mathbb{H}/\Gamma$ be an infinite Riemann surface such that $\Gamma$ is of the first kind.
A {\it partial measured foliation} $\mathcal{F}$ on $X$ is an assignment of a collection of sets $\{ U_i\}_i$ of $X$ (which do not have to cover the whole surface $X$) and differentiable real-valued functions 
$$
v_i:U_i\to\mathbb{R}
$$
with surjective tangent map at each point. 
The sets $U_i$ are closed Jordan domains with piecewise differentiable Jordan curves on their boundaries.
By the Implicit Function Theorem, the pre-image $v_i^{-1}(c)$ for $c\in\mathbb{R}$ is either a connected differentiable arc (possibly with endpoints) or empty, and 
\begin{equation}
\label{eq:coord-change}
v_i=\pm v_j+const
\end{equation} 
on $U_i\cap U_j$. 
\end{definition}

\begin{rem}
When $U_i\cap U_j$ has non-empty interior, the condition (\ref{eq:coord-change}) is a standard condition as in the case of the transition maps for the natural parameters of holomorphic quadratic differentials. When $U_i\cap U_j=\gamma$ is a differentiable arc, the condition (\ref{eq:coord-change}) imposes the equality of the transverse measures on $\gamma$ coming from $U_i$ and $U_j$. However, we do not require that $v_i:U_i\to\mathbb{R}$ extends to a differentiable function in an open neighborhood of $\gamma$. In fact, the functions $\{v_i\}_i$ glue to a piecewise differentiable function around $\gamma$. 
\end{rem}

For a partial measured foliation $\mathcal{F}$, a {\it horizontal arc} is a curve in $X$ that is a connected union of $v_i^{-1}(c_i)$ for some finite or infinite choice of indices $i$ and real numbers $c_i$. A {\it horizontal trajectory} of $\mathcal{F}$ is a maximal horizontal arc, including the possibility of a closed trajectory.

If $\mathcal{F}$ is a proper partial foliation of $X$, then the lift $\tilde{\mathcal{F}}$ to $\mathbb{H}$ is a proper partial measured foliation of $\mathbb{H}$. Its set of horizontal leaves is invariant under $\Gamma$.
Each non-singular horizontal trajectory of $\tilde{\mathcal{F}}$ has precisely two endpoints, and it can be homotoped to a hyperbolic geodesic of $\mathbb{H}$ modulo ideal endpoints on $\partial\mathbb{H}$. 
We denote by $G(\tilde{\mathcal{F}})$ the closure of the set of geodesics obtained by replacing the non-singular horizontal trajectories of $\tilde{\mathcal{F}}$ with the hyperbolic geodesics that share the same endpoints on $\partial\mathbb{H}$. Note that $G(\tilde{\mathcal{F}})$ is a geodesic lamination in the hyperbolic plane $\mathbb{H}$.

By repeating the arguments in \cite[\S 3.2]{Saric-heights}, the transverse measure to $\mathcal{F}$  induces a transverse measure to $\tilde{\mathcal{F}}$ which in turn induces a transverse measure on $G(\tilde{\mathcal{F}})$. Denote by $\mu_{\tilde{\mathcal{F}}}$ the induced measured lamination in $\mathbb{H}$ and note that it is invariant under the action of $\Gamma$. Therefore the measured lamination $\mu_{\tilde{\mathcal{F}}}$ induces a measured lamination $\mu_{{\mathcal{F}}}$ on $X$.

We assume that the collection  $\{ U_i\}_i$ defining the partial measured foliation $\mathcal{F}$ is locally finite in $\cup_iU_i$, i.e., every compact set in $\cup_iU_i$ intersects only finitely many sets of the collection. 

The Dirichlet integral of $v_i$ is given by $\int_{U_i}[(\frac{\partial v_i}{\partial x})^2+ (\frac{\partial v_i}{\partial y})^2]dxdy$ and by (\ref{eq:coord-change}) we have
$$
\int_{U_i\cap U_j}\Big{[}\Big{(}\frac{\partial v_i}{\partial x}\Big{)}^2+ \Big{(}\frac{\partial v_i}{\partial y}\Big{)}^2\Big{]}dxdy=\int_{U_i\cap U_j}\Big{[}\Big{(}\frac{\partial v_j}{\partial x}\Big{)}^2+ \Big{(}\frac{\partial v_j}{\partial y}\Big{)}^2\Big{]}dxdy.
$$
Using the partition of unity on $X$, the Dirichlet integral $\mathcal D(\mathcal{F})$ of $\mathcal{F}$ over $X$ is well-defined. If the integration is over a subsurface $Y$ of $X$, denote the corresponding Dirichlet integral by $\mathcal D_Y(\mathcal{F})$. 

\begin{definition}
A proper partial measured foliation $\mathcal{F}$ on $X$ is called an integrable foliation if $\mathcal D(\mathcal{F})=\mathcal D_X(\mathcal{F})<\infty$.
\end{definition}

The space of all measured laminations that are homotopic to integrable proper partial measured foliations is denoted by $ML_{int}(X)$. By \cite[Theorem 1.6]{Saric23}, the image of the space of finite-area holomorphic quadratic differentials on $X$ equals $ML_{int}(X)$.

Our goal is to obtain a characterization of $ML_{int}(X)$ in terms of the lengths of the cuffs $\{\alpha_n\}_n$, which is purely a hyperbolic geometry condition. We first give a necessary condition on
a measured lamination $\mu \in ML(X)$  to be equal to $\mu_{\varphi}$ for some $\varphi\in A(X)$ in terms of its intersection numbers with the cuffs $\alpha_n$.

\begin{prop}
\label{prop:upper-bdd}
Let $X$ be an infinite Riemann surface equipped with an upper-bounded geodesic pants decomposition $\{\alpha_n\}_n$.  For $\varphi\in A(X)$, we have
\begin{equation}
\label{eq:int-upper-bdd}
\sum_{n=1}^{\infty} \frac{[i(\mu_{\varphi} ,\alpha_n)]^2}{\ell_X(\alpha_n)}<\infty .
\end{equation}
\end{prop}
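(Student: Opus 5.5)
The plan is to bound each term $i(\mu_\varphi,\alpha_n)^2/\ell_X(\alpha_n)$ by a constant times the $|\varphi|$-area of the standard collar $\mathcal{C}(\alpha_n)$. Since standard collars of disjoint simple closed geodesics are pairwise disjoint, summing over $n$ will then give
$$
\sum_n \frac{i(\mu_\varphi,\alpha_n)^2}{\ell_X(\alpha_n)} \le C' \int_X|\varphi|<\infty .
$$

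\textbf{Step 1: a length-area inequality in each collar.}
Fix $n$. Let $\Delta_h(\alpha_n)\subset\Delta(\alpha_n)$ be the family of horizontal arcs of $\varphi$ contained in $\mathcal{C}(\alpha_n)$ that join its two boundary components. Give this family the transverse (vertical) measure $dv$, and let $h_n$ be its total height. In natural coordinates $w=u+iv$, the part of the collar swept out by these arcs is a union of horizontal strips. Each arc $\delta$ has $|\varphi|$-length $L(\delta)$, so by Cauchy--Schwarz
$$
h_n^2\le \Big(\int L(\delta)\,dv\Big)\Big(\int \frac{dv}{L(\delta)}\Big)\le \mathrm{Area}_{|\varphi|}(\mathcal{C}(\alpha_n))\cdot \mathrm{mod}[\Delta_h(\alpha_n)] .
$$
The bound $\int dv/L(\delta)\le \mathrm{mod}$ comes from the fact that the subfamily of horizontal arcs forms a union of Euclidean rectangles in the $w$-plane, and the modulus of a rectangle family is width over length. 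By monotonicity of modulus and Lemma \ref{lem:collar-modulus} (using $\ell_X(\alpha_n)\le M$, which holds by the upper-bounded hypothesis), we get $\mathrm{mod}[\Delta_h(\alpha_n)]\le \mathrm{mod}[\Delta(\alpha_n)]\le C\ell_X(\alpha_n)$. Hence
$$
h_n^2\le C\,\ell_X(\alpha_n)\,\mathrm{Area}_{|\varphi|}(\mathcal{C}(\alpha_n)).
$$

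\textbf{Step 2: relating $h_n$ to $i(\mu_\varphi,\alpha_n)$.}
I need $i(\mu_\varphi,\alpha_n)\le c\,h_n$, and this is the main obstacle. The intersection number equals the infimum, over curves homotopic to $\alpha_n$, of the vertical variation of $\varphi$; it is realized by the vertical measure of horizontal trajectories whose straightenings cross $\alpha_n$ essentially.

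I would argue as follows. A horizontal trajectory whose geodesic straightening crosses $\alpha_n$ must, by lifting to $\mathbb{H}$ and using the separation of endpoints by the lift $\tilde\alpha_n$, cross the collar $\mathcal{C}(\alpha_n)$ from one boundary to the other. This holds because the collar lift separates the endpoints on $\partial\mathbb{H}$ and the lifted leaf is a proper arc. Distinct crossings of $\tilde\alpha_n$ are counted with the measure. Therefore the vertical measure of crossing trajectories is at most the total vertical measure of the subarcs in $\Delta_h(\alpha_n)$, counted with multiplicity. I would take the family to include all crossing subarcs, so that $i(\mu_\varphi,\alpha_n)\le h_n$.

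Two points need care. First, the horizontal foliation may have non-proper (recurrent) leaves; I would note that for $\varphi\in A(X)$ the straightening $\mu_\varphi$ is defined via leaves with well-defined endpoints (as in \cite{Saric-heights}), and the remaining leaves carry zero measure. Second, multiple crossings of one leaf through the collar should be counted separately. Strip-by-strip Cauchy--Schwarz handles this, since different crossings occupy disjoint regions of the collar.

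\textbf{Step 3: summing.}
Combining the two steps gives
$$
i(\mu_\varphi,\alpha_n)^2/\ell_X(\alpha_n)\le C\,\mathrm{Area}_{|\varphi|}(\mathcal{C}(\alpha_n)).
$$
Summing over $n$ with disjoint collars yields \eqref{eq:int-upper-bdd}. Punctured boundary components contribute no cuffs, so they need no treatment.
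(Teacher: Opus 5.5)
Your proposal is correct and follows the paper's proof. Both arguments apply Cauchy--Schwarz over the horizontal arcs crossing $\mathcal{C}(\alpha_n)$, identify $\int dv/L$ with the modulus of that family by the length--area (Beurling) argument, bound that modulus by $C\ell_X(\alpha_n)$ via monotonicity and Lemma~\ref{lem:collar-modulus}, and sum over the disjoint collars. Your lifting and endpoint-separation argument in Step 2 spells out what the paper compresses into choosing a transversal $A_n\subset\alpha_n$ that meets each crossing arc once and asserting $i(\mu_\varphi,\alpha_n)\le\int_{A_n}|dv|$.
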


\begin{proof}
Consider the horizontal foliation $h_{\varphi}$ of $\varphi$. Let $\Sigma_n$ be the family of the subarcs of the trajectories of $h_{\varphi}$ that connect the two boundary components of the standard collar $\mathcal{C}(\alpha_n)$ around $\alpha_n$. Let $w=u+iv=\int_{z_0}^{*}\sqrt{\varphi (z)}dz$ be the natural parameter for $\varphi$. Choose a measurable subset $A_n$ of $\alpha_n$ such that each leaf of $\Sigma_n$ intersects it in exactly one point. Then we have
$$
i(\mu_{\varphi},\alpha_n)\leq \int_{A_n}|dv|
$$
since the intersection number is the infimum of $\int_{\alpha} |dv|$ over all closed curves $\alpha$ homotopic to $\alpha_n$. 

Given $z\in A_n$, let $\ell_{\varphi}(z)$ be the $\varphi$-length of the horizontal arc in $\Sigma_n$ through $z$. Then, by the Cauchy-Schwarz inequality, we have
$$
i(\mu_{\varphi},\alpha_n)^2\leq \Big{(}\int_{A_n}\frac{1}{\sqrt{\ell_{\varphi}(z)}}\sqrt{\ell_{\varphi}(z)}|dv|\Big{)}^2\leq
\Big{(}\int_{A_n} \frac{|dv|}{{\ell_{\varphi}(z)}}\Big{)}\Big{(}\int_{A_n} {{\ell_{\varphi}(z)}}{|dv|}\Big{)}.
$$
By an application of Beurling's criteria (see \cite{HS}), we have
$$
\int_{A_n} \frac{|dv|}{{\ell_{\varphi}(z)}}=mod(\Sigma_n).
$$
On the other hand, $\int_{A_n} {{\ell_{\varphi}(z)}}{|dv|}$ is the total $\varphi$-area of the union of the arcs in $\Sigma_n$ which is a subset of the collar $\mathcal{C}(\alpha_n)$.
We conclude that
$$
i(\mu_{\varphi},\alpha_n)^2\leq \Big{(}\int_{\mathcal{C}(\alpha_n)}|\varphi (z)|dxdy\Big{)}\cdot \mathrm{mod}\Sigma_n.
$$
 By the monotonicity of the modulus under the inclusion (see \cite[\S IV.3]{GM}), since $\Sigma_n\subset\Delta (\alpha_n)$ we have that $\mathrm{mod}\Sigma_n\leq\mathrm{mod}\Delta (\alpha_n)$, where $\Delta (\alpha_n)$ is the curve family connecting the boundaries of the standard collar $\mathcal{C}(\alpha_n)$. By Lemma \ref{lem:collar-modulus}, we have $\mathrm{mod}\Sigma_n\leq C\ell_X(\alpha_n)$ for some constant $C>0$.  Then
$$
\frac{i(\mu_{\varphi},\alpha_n)^2}{\ell_X(\alpha_n)}\leq C\Big{(}\int_{\mathcal{C}(\alpha_n)}|\varphi (z)|dxdy\Big{)}.
$$
By adding over all $n$ and using the fact that the standard collars around cuffs of a pants decomposition are disjoint, we obtain
$$
\sum_{n=1}^{\infty}\frac{i(\mu_{\varphi},\alpha_n)^2}{\ell_X(\alpha_n)}\leq C\sum_{n=1}^{\infty}\Big{(}\int_{\mathcal{C}(\alpha_n)}|\varphi (z)|dxdy\Big{)}\leq C\int_X |\varphi |
$$
which gives 
 (\ref{eq:int-upper-bdd}) because $\varphi$ is integrable.
\end{proof}

The measured lamination $\mu_{\varphi}$ is not determined solely by knowing the intersection numbers with all $\alpha_n$ because it may twist many times around $\alpha_n$, which is not detected by $i(\mu_{\varphi},\alpha_n)$. To address this, we consider intersections with additional curves.
For each $n$, let $\beta_n$ be a simple closed geodesic in $X$ which intersects $\alpha_n$ the minimal number of times (either one or two), does not intersect any other cuff of the pants decomposition, and has minimal length among all such geodesics. We derive a necessary condition for $i(\mu_{\varphi},\beta_n)$.

\begin{prop}
\label{prop:upperbdd-beta_n}
Let $X$ be an infinite Riemann surface equipped with an upper-bounded geodesic pants decomposition $\{\alpha_n\}_n$. For $\{\beta_n\}_n$ as above and $\varphi\in A(X)$, we have
\begin{equation}
\label{eq:int-upper-bddbeta}
\sum_{n=1}^{\infty} {\ell_X(\alpha_n)}{[i(\mu_{\varphi},\beta_n )]^2}<\infty .
\end{equation}
\end{prop}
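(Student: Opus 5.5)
We follow the modulus/Cauchy–Schwarz strategy of Proposition \ref{prop:upper-bdd}, but we replace the collar of $\beta_n$ by regions of definite size. The collar of $\beta_n$ is too thin when $\ell_X(\alpha_n)$ is small, since then $\beta_n$ is long. We work with the horizontal foliation $h_\varphi$ and its natural parameter $w=u+iv$, and we bound $i(\mu_\varphi,\beta_n)$ by $\int|dv|$ along a suitable representative of $\beta_n$.

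Let $Y_n$ be the union of the (one or two) pairs of pants adjacent to $\alpha_n$. Then $\beta_n\subset Y_n$, and $Y_n$ meets only a bounded number of other $Y_m$, so the sets $Y_n$ have bounded overlap. Take a horizontal arc of $h_\varphi$ crossing $\beta_n$ and follow it in both directions until it first exits $Y_n$ or returns to $\beta_n$. We split these arcs into two families.

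\emph{Family $\Sigma_n^1$.} These are the arcs that cross $\beta_n$ and then reach a boundary cuff $\alpha_k\neq\alpha_n$ of $Y_n$, or reach $\partial\mathcal{C}(\alpha_n)$ from the thick part. Such an arc must cross the thick part of a pair of pants between $\beta_n$ and a cuff $\alpha_k$. The upper bound on cuff lengths makes the thick part of each pair of pants have bounded geometry away from the collars. Hence the modulus of the curves joining the portion of $\beta_n$ outside $\mathcal{C}(\alpha_n)$ to the other boundary components is bounded by a uniform constant. Applying Beurling's criterion and Cauchy–Schwarz exactly as in Proposition \ref{prop:upper-bdd} gives
\[
(\text{transverse measure of }\Sigma_n^1)^2\le C\int_{Y_n}|\varphi|.
\]
The length of $\beta_n$ inside the collar is about $2\log(1/\ell_X(\alpha_n))$. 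That collar piece is controlled through $i(\mu_\varphi,\alpha_n)$: a leaf crossing the collar of $\alpha_n$ contributes to $i(\mu_\varphi,\alpha_n)$, so these contributions are bounded using Proposition \ref{prop:upper-bdd}. Since $\ell_X(\alpha_n)\le M$, the factor $\ell_X(\alpha_n)$ only helps here. This yields
\[
\ell_X(\alpha_n)\,(\text{measure of }\Sigma_n^1)^2\le C\Big(\int_{Y_n}|\varphi|+\frac{i(\mu_\varphi,\alpha_n)^2}{\ell_X(\alpha_n)}\Big).
\]

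\emph{Family $\Sigma_n^2$.} These are the arcs that leave $\beta_n$ and return to $\beta_n$ without leaving $Y_n$; in the straightened picture they wind around $\alpha_n$. Since $\mu$ is straightened, a leaf that winds $t$ times around $\alpha_n$ crosses the collar of $\alpha_n$ essentially along its full length, spiralling. Each such winding leaf must traverse the thin annulus $\mathcal{C}(\alpha_n)$ in the direction parallel to $\alpha_n$, picking up $\varphi$-length. The curves homotopic to $\alpha_n$ inside $\mathcal{C}(\alpha_n)$ form a family of modulus about $1/\ell_X(\alpha_n)$ in the reciprocal sense: the family of closed curves in the collar homotopic to the core has extremal length about $\ell_X(\alpha_n)$. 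Each closed $\varphi$-horizontal loop segment of the winding part therefore has $\varphi$-length at least $c\,\ell_{\varphi}$-comparable to $\ell_X(\alpha_n)$ times extremal data. The goal is to show, via Cauchy–Schwarz on the winding strips (the $\varphi$-area of a strip of width $dv$ and $t$ windings being at least $t\,dv$ times the $\varphi$-length of the core), that
\[
\ell_X(\alpha_n)\,(\text{measure of }\Sigma_n^2)^2\le C\int_{\mathcal{C}(\alpha_n)}|\varphi|.
\]
Concretely, we use the family of curves homotopic to $\alpha_n$ in the collar, whose extremal length is at most $C\ell_X(\alpha_n)$. This gives
\[
\inf_\gamma \ell_\varphi(\gamma)^2\le C\ell_X(\alpha_n)\int_{\mathcal{C}(\alpha_n)}|\varphi|,
\]
and a strip of transverse width $m$ winding once contains such a loop. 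The precise combination, namely the number of windings times $m\le i(\mu_\varphi,\beta_n)$ balanced against the area, is the main obstacle. We expect it to be the heart of the argument, and we would first try the Beurling-type estimate on the lifted annulus cover (Figure \ref{fig:lifts}).

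Finally, since $i(\mu_\varphi,\beta_n)$ is at most the sum of the measures of $\Sigma_n^1$ and $\Sigma_n^2$, we can sum over $n$. Bounded overlap of the $Y_n$, disjointness of the collars, Proposition \ref{prop:upper-bdd}, and $\int_X|\varphi|<\infty$ then give \eqref{eq:int-upper-bddbeta}.
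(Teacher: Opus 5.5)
There is a genuine gap, and it is the step you yourself flag as the heart of the argument: the family of arcs that leave $\beta_n$ and return to it (your $\Sigma_n^2$) is never controlled. Moreover, the tool you propose points in the wrong direction. The extremal-length inequality $\inf_\gamma \ell_\varphi(\gamma)^2\le C\ell_X(\alpha_n)\int_{\mathcal{C}(\alpha_n)}|\varphi|$ for core curves is an \emph{upper} bound on $\varphi$-lengths. Bounding a transverse measure by $\varphi$-area needs the opposite: the horizontal arcs themselves must not be too short. That is, you need an upper bound on the modulus of the family of those arcs, which is what enters the Beurling/Cauchy--Schwarz inequality $i(\mu_\varphi,\beta_n)^2\le \mathrm{mod}(\Sigma_n)\int_P|\varphi|$. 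A short core loop gives no such information.

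The missing estimate is $\mathrm{mod}\le C/\ell_X(\alpha_n)$ for the returning arcs. It comes from the universal cover, as your closing remark suggests, and not from counting windings. Lift so that $\alpha_n$ is the $y$-axis. A returning arc lifts to an arc joining two consecutive lifts of $\beta_n$, so it must cross the region between two lifts, both orthogonal to the $y$-axis, of the arc formed by the orthogeodesic from a seam to $\alpha_n$ and its mirror image. For zero twist, front--back symmetry puts their feet at distance $2x$ or $2y$, where the foot of that orthogeodesic cuts the front half of $\alpha_n$ into arcs of lengths $x,y$. These satisfy $x+y=\ell_X(\alpha_n)/2$ and $\sinh x/\sinh y=\cosh(\ell_X(\alpha_{n_1})/2)/\cosh(\ell_X(\alpha_{n_2})/2)$, so the cuff-length bound gives $x,y\ge M_1\ell_X(\alpha_n)$. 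Under $\log z$ the region between two geodesics at distance $d$ becomes a $d\times\pi$ rectangle, so the modulus is at most $\pi/d\le C/\ell_X(\alpha_n)$. A nonzero twist costs only a bounded quasiconformal factor. This large modulus is exactly what produces the weight $\ell_X(\alpha_n)$ in the statement. The area must be taken over all of $P$, not $\mathcal{C}(\alpha_n)$, since returning arcs need not stay in the collar.

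Two further steps need repair.

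\emph{Trivial returns.} Returning arcs that are homotopic rel endpoints into $\beta_n$ must be discarded before any modulus estimate. They can be arbitrarily short, so no modulus bound holds for them. They can be removed because $i(\mu_\varphi,\beta_n)$ is an infimum over curves homotopic to $\beta_n$.

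\emph{The collar part of $\Sigma_n^1$.} Your control of this part through $i(\mu_\varphi,\alpha_n)$ is not valid. A horizontal arc can cross $\beta_n$ inside $\mathcal{C}(\alpha_n)$ and exit to another cuff on the same side of $\alpha_n$ without ever crossing $\alpha_n$. In any case, Proposition~\ref{prop:upper-bdd} bounds an infimum, not the $\varphi$-transverse measure along a given arc. No splitting of $\beta_n$ is needed. By the collar lemma, $\beta_n$ stays at distance at least $d_0>0$ from $\partial P$, with $d_0$ depending only on the length bound. So every arc from $\beta_n$ to $\partial P$ crosses the $d_0$-neighbourhood of $\partial P$, which has bounded hyperbolic area, and this whole family has uniformly bounded modulus.

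Combining the two bounds gives $\mathrm{mod}(\Sigma_n)\le C+C/\ell_X(\alpha_n)\le C'/\ell_X(\alpha_n)$. Hence $\ell_X(\alpha_n)\,i(\mu_\varphi,\beta_n)^2\le C'\int_P|\varphi|$, and bounded overlap of the sets $P$ finishes the proof.
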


\begin{rem}
\label{rem:int_nec}
Note that if we attempt to estimate the modulus of the curve family $\Sigma_n$ of the standard collar around $\beta_n$ we would get an upper bound of the order $\frac{e^{\frac{1}{2\ell_{X}(\alpha_n)}}}{\ell_{X}(\alpha_n)}$ or $e^{\frac{1}{2{\ell_X(\alpha_n)}}}$ if we use the non-standard collar from \cite{BHS} as $\ell_X(\alpha_n)\to 0$. We use a different method that yields a better estimate because it does not include the exponential factor. 
\end{rem}

\begin{proof}
Any $\beta_n$ as above 
intersects only one cuff $\alpha_n$ of the pants decomposition. If $\beta_n$ intersects $\alpha_n$ in two points, then
 $\beta_n$ belongs to the interior of the union $P$ of two pairs of pants of the pants decomposition. If $\beta_n$ intersects $\alpha_n$ in one point, then
 $\beta_n$ belongs to the interior of the torus with geodesic boundary $P$ that is the single pair of pants glued along $\alpha_n$.
 
 \begin{figure}
\leavevmode \SetLabels
\L(.51*.49) $\alpha_n$\\
\L(.5*.8) $\beta_n$\\
\endSetLabels
\begin{center}
\AffixLabels{\centerline{\epsfig{file=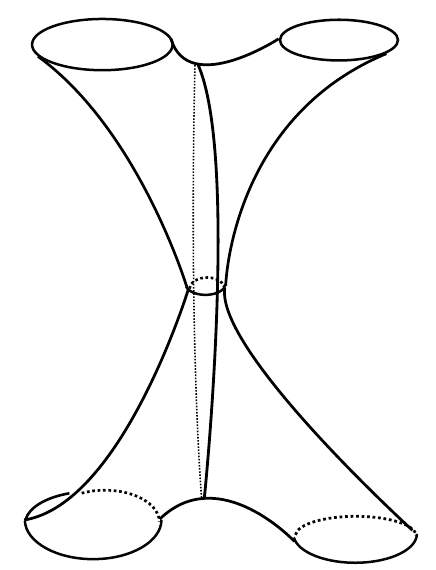,width=6.0cm,angle=0} }}
\vspace{-20pt}
\end{center}
\caption{The geodesic $\beta_n$ in $P$.} 
\label{fig:1}
\end{figure}

Assume we are in the first case. Then $\beta_n\subset P$ is diving $P$ (see Figure \ref{fig:1}). Recall that the intersection number $i(\mu_{\varphi},\beta_n)$ is the infimum over all curves homotopic to $\beta_n$ of the integral of $|Im(\sqrt{\varphi (z)}dz)|$, where a curve is allowed to have subarcs on the horizontal foliation of $\varphi$. If a subarc of a horizontal trajectory in $P$ with both endpoints on $\beta_n$ can be homotoped to $\beta_n$ modulo its endpoints, then we erase it from the set of all horizontal arcs connecting $\beta_n$ to itself since it will not contribute to the infimum of the integrals. The integral of $|Im(\sqrt{\varphi (z)}dz)|$ over the part of $\beta_n$ that intersects the remaining horizontal arcs is bounding $i(\mu_{\varphi},\beta_n)$ from the above (see \cite[\S 3.2]{Saric-heights}). We denote by $\Sigma_n$ the family of horizontal arcs of $\varphi$ in $P$ that remain after this erasing.

Let $w=u+iv$ be the natural parameter of $\varphi$ and let $B_n$ be the subset of $\beta_n$ that intersects the horizontal arcs $\Sigma_n$ which are not erased. Then, analogous to the method of the proof of Proposition \ref{prop:upper-bdd} using $B_n$ in place of $A_n$, we obtain
\begin{equation}
\label{eq:int_est}
i(\mu_{\varphi},\beta_n)^2\leq \Big{(}\int_{P}|\varphi (z)|dxdy\Big{)}\cdot \mathrm{mod}(\Sigma_n),
\end{equation}
where $\Sigma_n=\Sigma_n^1\cup\Sigma_n^2$, $\Sigma_n^1$ is the family of the horizontal arcs in $P$ that connect $\beta_n$ to itself without being homotopic to an arc on $\beta_n$ and $\Sigma_n^2$ is the family of  the horizontal arcs that connect $\beta_n$ to a boundary component of $P$.

Let $\alpha_{n_j}$ for $j=1,\ldots ,4$ be the cuffs on the boundary of $P$. Since the lengths of $\alpha_{n_j}$ are bounded from above, the collar lemma \cite[Theorem 4.1.1]{Buser} implies that the distance between $\beta_n$ and $\alpha_{n_j}$ is bounded below by a positive constant independent of $n$. It follows that
$$
\mathrm{mod}\Sigma_n^2\leq const
$$
for all $n$.

\begin{figure}[h]
	\leavevmode \SetLabels
	\endSetLabels
	\begin{center}
		\AffixLabels{\centerline{\epsfig{file =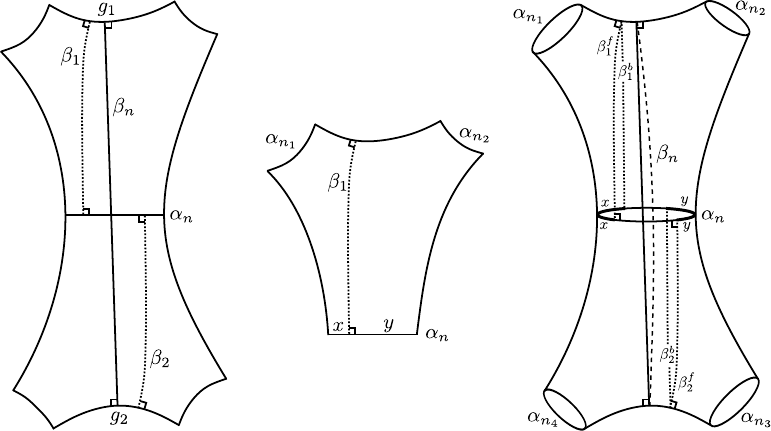, scale=0.9} }}
		\vspace{-20pt}
	\end{center}
	\caption{Front side of the union $P$ of two pairs of pants.} 
	\label{fig:front}
\end{figure}

The geodesic $\beta_n$ is contained in the union $P$ of two pairs of pants in the pants decomposition. In addition, assume that the twist along $\alpha_n$ is zero. Since the twist along $\alpha_n$ is zero, the two front-back symmetries of the two pairs of pants give a global front-back symmetry of $P$ that preserves the homotopy class of $\beta_n$. It follows that the geodesic $\beta_n$ is preserved under front-back symmetry in the union $P$ of the two pairs of pants, and therefore $\beta_n$ meets the seams $g_1$ and $g_2$ at right angles, see Figure \ref{fig:front}. Let $\beta_1$ and $\beta_2$ to be the orthogeodesics between the seams $g_1, g_2$ and $\alpha_n$, respectively. The orthogeodesics $\beta_1$ and $\beta_2$ extend by front-back symmetry to orthogeodesics in the back of $P$. The foot of the orthogeodesic $\beta_1$ (similarly for $\beta_2$) partitions the front side of $\alpha_n$ into two segments $x$ and $y$, see Figure \ref{fig:front}. We claim that there exist $M_1, M_2>0$ depending only on the upper bound of the cuff lengths $C$ such that 
\begin{equation}
\label{eq:xy}M_1\ell_X(\alpha_n) \leq x, y \leq M_2\ell_X(\alpha_n).
\end{equation}
By \cite[Theorem 7.19.2]{Bear}, we have
\begin{equation*}
	\begin{array}l
		\sinh x\sinh \frac{\ell_X (\beta_1)}{2}=\cosh\frac{\ell_X (\alpha_{n_1})}{2}\\
		\sinh y\sinh \frac{\ell_X (\beta_1)}{2}=\cosh\frac{\ell_X (\alpha_{n_2})}{2}.
	\end{array}
\end{equation*}
Dividing the two equations, we get
$$
\frac{1}{\cosh (C/2)}\leq\frac{\sinh x}{\sinh y}=\frac{\cosh\frac{\ell_X (\alpha_{n_1})}{2}}{\cosh\frac{\ell_X (\alpha_{n_2})}{2}}\leq \cosh (C/2).
$$
By $x+y=\frac{\ell_X (\alpha_n)}{2}$ and the above, the equation (\ref{eq:xy}) follows.

\begin{figure}[h]
	\leavevmode \SetLabels
	\endSetLabels
	\begin{center}
		\AffixLabels{\centerline{\epsfig{file =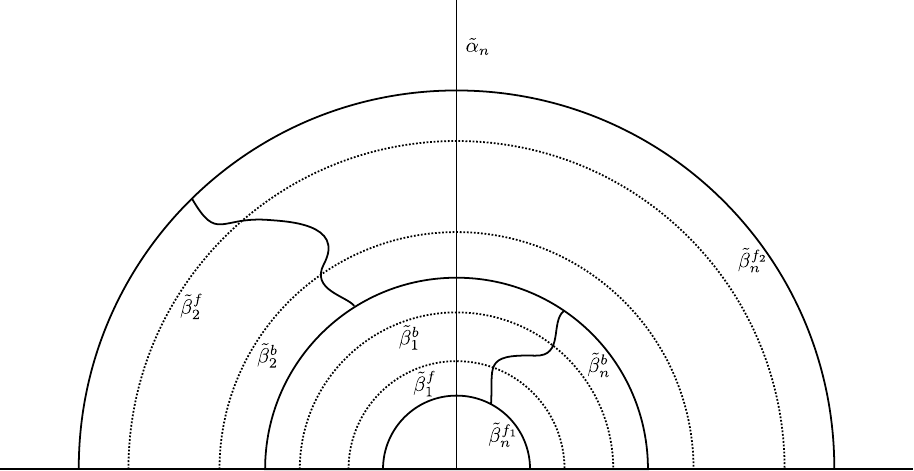, scale=0.7} }}
		\vspace{-20pt}
	\end{center}
	\caption{The lift of the family $\Sigma_n^1$ is contained in $(\Sigma_n^1)^*\cup (\Sigma_n^2)^*$.} 
	\label{fig:lifts}
\end{figure}

We claim that $\beta_n$ is located between $\beta_1$ and $\beta_2$. Indeed, the front side of $P$ in Figure \ref{fig:front} is simply connected, and it can be isometrically embedded in the universal cover of $X$. Since $\beta_n$ is orthogonal to geodesics containing $g_1$ and $g_2$, and the geodesic containing $\alpha_n$ is between them, the conclusion follows. 

Choose the universal cover of $X$ by the upper half-plane $\mathbb{H}$ such that the $y$-axis covers $\alpha_n$. Let $\tilde{\beta}_n^{f_1}, \tilde{\beta}_n^b$ and $\tilde{\beta}_n^{f_2}$ be three geodesics covering $\beta_n$ that consecutively intersect the $y$-axis. Then any arc in $\Sigma_n^1$ lifts to an arc that connects $\tilde{\beta}_n^{f_i}$ to $\tilde{\beta}_n^{b}$ for $i=1,2$, and stays between $\tilde{\beta}_n^{f_i}$ and $\tilde{\beta}_n^{b}$, see Figure \ref{fig:lifts}. 
Let $(\Sigma_n^i)^*$ be the family of all curves in between $\tilde{\beta}_n^{f_i}$ and $\tilde{\beta}_n^{b}$ that connect $\tilde{\beta}_n^{f_i}$ to $\tilde{\beta}_n^{b}$ for $i=1, 2$. By the monotonicity of the modulus (see \cite[\S IV.3]{GM}), we have
$$
\mathrm{mod}\Sigma_n^1\leq\mathrm{mod}[(\Sigma_n^1)^*]+\mathrm{mod}[(\Sigma_n^2)^*].
$$

The geodesics $\tilde{\beta}_n^{f_2}$ and $\tilde{\beta}_n^b$ are separated by two consecutive lifts of the geodesic that contains the orthogeodesic arc $\beta_2^f$ and its symmetric orthogeodesic arc $\beta_2^b$ in $P$, which we denote by $\tilde{\beta}^f_2$ and $\tilde{\beta}_2^b$ in the given order (see Figure \ref{fig:lifts}, the dotted lines). Similarly, the geodesics $\tilde{\beta}_n^{b}$ and $\tilde{\beta}_n^{f_1}$ are separated by two consecutive lifts of the geodesic that contains the orthogeodesic arc $\beta_1^b$ and its symmetric orthogeodesic arc $\beta_1^f$ in $P$, which we denote by $\tilde{\beta}^b_1$ and $\tilde{\beta}_1^f$ in the given order (see Figure \ref{fig:lifts}, the dotted lines).

Note that each curve in $(\Sigma_n^i)^*$ contains a subcurve connecting $\tilde{\beta}_i^f$ to $\tilde{\beta}_i^b$ for $i=1, 2$. Due to the front-back symmetry (see the rightmost of Figure \ref{fig:front}), the hyperbolic distance between the feet of the orthogedesic arcs $\beta_i^f$ and $\beta_i^b$ is $2x$ (or $2y$ depending on the orientation). By (\ref{eq:xy}), the hyperbolic distance between the geodesic boundaries of $(\Sigma_n^i)^*$ is at least $2M_1\ell_X(\alpha_n)$. Hence, the conformal map $z\mapsto \log z$ maps the space between the two geodesics $\tilde\beta_n^{f_i}$ to $\tilde\beta_n^b$ to the rectangle with horizontal side of length at least $2M_1\ell_X(\alpha_n)$ and vertical side of length $\pi$. The curves connecting $\tilde\beta_n^{f_i}$ to $\tilde\beta_n^b$ are mapped to the curves connecting the vertical sides of the rectangle. By the invariance under conformal maps and the monotonicity of the modulus, we obtain
$$
\mathrm{mod}\Sigma_n^1\leq\mathrm{mod}[(\Sigma_n^1)^*] +\mathrm{mod}[(\Sigma_n^2)^*]< \frac{const}{\ell_X (\alpha_n)}.
$$ 

Next, we assume that the twist along $\alpha_n$ is not zero. Since $\ell_X(\alpha_n)$ is bounded above, the map from the zero twist surface to the surface with non-zero twist is $K$-quasiconformal. Therefore, the modulus of the curve family after twisting is at most
$$K\mathrm{mod}\Sigma_n^1\leq \frac{const}{\ell_X (\alpha_n)}.
$$ 
From (\ref{eq:int_est}), we obtain
$$
\ell_X(\alpha_n) [i(\mu_{\varphi},\beta_n)]^2\leq const \int_{P}|\varphi (z)|dxdy.
$$
Since there is an upper bound on the number of geodesics $\beta_n$ that intersect any pair of pants, we conclude that $\sum_n \ell_X(\alpha_n) [i(\mu_{\varphi},\beta_n)]^2$ is bounded by a multiple of $\int_X|\varphi |$ and the sum over all $\beta_n$ in the first case is finite.

Assume that $\beta_n$ is in the interior of a torus $P$ with a geodesic boundary. Recall that $\Sigma_n^1$ is the family of the horizontal arcs of $\varphi$ that connect $\beta_n$ to itself without being homotopic to an arc on $\beta_n$, and $\Sigma_n^2$ is the family of the horizontal arcs that connect $\beta_n$ to the boundary geodesic of $P$. Note that $\mathrm{mod}\Sigma_n^2$ satisfies the same inequality as in the previous case using the same method. When the twist along $\alpha_n$ is zero, the estimate on $\mathrm{mod}\Sigma_n^1$ is much simpler. In this setting, $\beta_n$ is orthogonal to $\alpha_n$ and therefore the hyperbolic distance between two consective lifts $\tilde \beta_n^i$ for $i=1, 2$ of $\beta_n$ to the universal cover of $X$ is $\ell_X(\alpha_n)$. By the same arguments from the previous case, $\mathrm{mod}\Sigma_n^1\leq\frac{const}{\ell_X (\alpha_n)}$. Similarly, the same inequality holds when the twist along $\alpha_n$ is not zero. Thus $\sum_n \ell_X(\alpha_n) [i(\mu_{\varphi},\beta_n)]^2<\infty$ in this case as well.
\end{proof}

Note that the conditions in Propositions \ref{prop:upper-bdd} and \ref{prop:upperbdd-beta_n} can be expressed by certain conditions on the weights of a Dehn-Thurston train track that weakly carries $\mu$ (see \cite{Saric20}). This follows because for every $n$, both $i(\mu_{\varphi},\alpha_n)$ and $i(\mu_{\varphi},\beta_n)$ can be expressed as finite linear combinations of the weights on the edges in the components $P$, and conversely, each weight can be recovered from the intersection numbers. The conditions (\ref{eq:int-upper-bdd}) and (\ref{eq:int-upper-bddbeta}) are easier to use. However, we will use the train track $\Theta$ that carries $\mu_{\varphi}$ since its tangencies inform us where the leaves are going.

\section{The sufficiency of the summability condition}
\label{sec:suff}

We prove that if $\mu\in ML(X)$ satisfies the conditions in Propositions \ref{prop:upper-bdd} and \ref{prop:upperbdd-beta_n}, then $\mu$ arises from $\varphi\in A(X)$, which gives a characterization of $ML_{int}(X)$.
 
 \begin{thm}
 \label{thm:int-upper-bdd}
 Let $X$ be an infinite Riemann surface with an upper-bounded geodesic pants decomposition and let $\mu\in ML(X)$. Denote by $\{\alpha_n\}_n$ the family of cuffs and by $\{\beta_n\}_n$ the transverse family of simple  closed geodesics as above. Then
 $$
 \mu\in ML_{int}(X)
 $$ 
 if and only if 
 \begin{equation}
 \label{eq:MLint-upper}
 \sum_{n=1}^{\infty}\Big{\{} \frac{[i(\mu ,\alpha_n)]^2}{\ell_X(\alpha_n)}+ {\ell_X(\alpha_n)}{[i(\mu ,\beta_n)]^2}\Big{\}}<\infty .
 \end{equation}
 \end{thm}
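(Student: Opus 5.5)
The necessity direction is already settled. If $\mu\in ML_{int}(X)$, then by \cite[Theorem 1.6]{Saric23} we have $\mu=\mu_\varphi$ for some $\varphi\in A(X)$. Propositions \ref{prop:upper-bdd} and \ref{prop:upperbdd-beta_n} then give (\ref{eq:MLint-upper}). The work is in sufficiency. Given $\mu$ satisfying (\ref{eq:MLint-upper}), we will build a proper partial measured foliation $\mathcal F$ on $X$ with $\mathcal D(\mathcal F)<\infty$ whose straightening is $\mu$. Applying \cite[Theorem 1.6]{Saric23} again then gives $\mu\in ML_{int}(X)$.

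\textbf{Combinatorial data.} Let $\Theta$ be the Dehn--Thurston train track adapted to the pants decomposition that weakly carries $\mu$ \cite{Saric20}. In each cuff $\alpha_n$ it carries a total weight $a_n=i(\mu,\alpha_n)$ and a twisting weight $t_n$. In each pair of pants the weights on the branches connecting cuffs are linear combinations of the $a_n$ of its three cuffs. As noted after Proposition \ref{prop:upperbdd-beta_n}, $i(\mu,\beta_n)$ is a finite linear combination of these weights. Inverting the relation, $|t_n|$ is bounded by a constant times $i(\mu,\beta_n)$ plus the $a_k$ of the (at most four) neighbouring cuffs. Hence (\ref{eq:MLint-upper}) controls $\sum_n a_n^2/\ell_X(\alpha_n)$ and $\sum_n \ell_X(\alpha_n)t_n^2$. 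The neighbouring $a_k$ terms are harmless, since $\ell\le C$ gives $\ell a_k^2\lesssim a_k^2/\ell_k$.

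\textbf{Collar pieces.} In the standard collar $\mathcal C(\alpha_n)$, which has width $w_n\approx\log(1/\ell_n)$, we use log coordinates in which the collar is approximately the annulus $[0,\ell_n]\times[-w_n,w_n]$ up to conformal distortion. Here we place a foliation by leaves crossing the collar while twisting by the amount $t_n$, with transverse measure given by $v=(a_n/\ell_n)\cdot(\text{angular coordinate})$. A direct computation should give a Dirichlet integral of order
\[
\frac{a_n^2}{\ell_n}\,w_n\ \text{(crossing)}\qquad\text{and}\qquad \ell_n t_n^2\ \text{(twisting)}.
\]
The extra $w_n$ factor is fatal. The first thing to try is to foliate only a sub-collar of bounded width near the boundary, or equivalently to take leaves running along geodesics orthogonal to $\alpha_n$ at spacing proportional to the measure. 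This uses the fact that the modulus of the crossing family is $\asymp\ell_n$ by Lemma \ref{lem:collar-modulus}, so the energy of an extremal-like foliation carrying measure $a_n$ across is $\asymp a_n^2/\ell_n$.

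The twist should be spread over a region of width $\asymp 1$ in the collar, where the circumference is $\asymp 1$. This yields $\lesssim (t_n\ell_n)^2/\ell_n$ or better, in the spirit of Lemma \ref{lem:collar-foliation}.

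\textbf{Gluing, and the main obstacle.} Next we join the foliated collar pieces through the thick parts of the pants using rectangles along the branches of $\Theta$. The pants minus collars have bounded geometry, so the energy there is $\lesssim$ the squares of the branch weights, hence $\lesssim\sum_k a_k^2\lesssim\sum a_k^2/\ell_k$. The difficulty is the mismatch. A collar boundary of hyperbolic length $\asymp 1$ receives measure $a_n$, but it must feed intervals on the boundaries of adjacent, possibly much thinner collars. Those intervals carry prescribed measures but have very different sizes, and a naive interpolation could create large gradients.

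To handle this we shrink the collars by a quasiconformal map with bounded dilatation, as in Lemma \ref{lem:trapezoid-foliation}. This frees a trapezoid-shaped region of bounded modulus between the interval on the thick side and the interval on the collar side. Inside the trapezoid we foliate by an affine interpolation, whose energy is $\lesssim$ the measure squared times the ratio of lengths, which we arrange to be bounded. Bounded dilatation changes Dirichlet integrals only by a constant factor.

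Summing over all collars and pants, local finiteness gives $\mathcal D(\mathcal F)\lesssim\sum_n\big(a_n^2/\ell_n+\ell_n i(\mu,\beta_n)^2\big)<\infty$. Finally, the homotopy class of leaves matches $\Theta$ by construction, so $\mathcal F$ straightens to $\mu$. I expect the trapezoid gluing near very short cuffs, with the twist spread so as not to reintroduce the $w_n$ factor, to be the main technical obstacle.
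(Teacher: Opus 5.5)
Your necessity argument and your overall strategy coincide with the paper's. The gap is in the collar piece, which is the step the paper itself calls the most demanding.

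\textbf{Your conformal model of the collar is wrong.} In the chart $\log z$ of Lemma \ref{lem:collar-foliation}, $\mathcal{C}(\alpha_n)$ is the rectangle $Q=[0,\ell_n]\times[\frac{\pi}{2}-\theta,\frac{\pi}{2}+\theta]$ with vertical sides identified, where $\cos\theta=\tanh(\ell_n/2)$. So its height is less than $\pi$. The hyperbolic width $w_n\approx\log(4/\ell_n)$ is absorbed by the conformal factor $1/\sin(\arg z)$, which is of order $1/\ell_n$ near $\partial\mathcal{C}(\alpha_n)$. Hence the identification with $[0,\ell_n]\times[-w_n,w_n]$ is not ``up to bounded distortion'', and there is no $w_n$ factor at all. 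The foliation by arcs orthogonal to $\alpha_n$, namely $v=(a_n/\ell_n)\log|z|$, has energy $2\theta\,a_n^2/\ell_n$ on the whole collar. Foliating ``only a sub-collar'' is not an option anyway, since every leaf must cross the whole collar.

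\textbf{This misconception leads to a twisting step that fails.} The part of the collar where the hyperbolic circumference and width are $\asymp 1$ corresponds, in the $\log z$ chart, to a band of height $\asymp\ell_n$ over a base of length $\ell_n$. Roughly, this is $\ell_n/2\le\arg z\le e\ell_n/2$ on one side, an annulus of modulus $\asymp 1$. If the leaves do their winding there, every vertical segment crossing the band carries transverse measure $\gtrsim t_n$. Cauchy--Schwarz along these segments then gives
\[
\int_{\mathrm{band}}|\nabla v|^2\ \gtrsim\ \int_0^{\ell_n}\frac{t_n^2}{c\,\ell_n}\,dx\ =\ \frac{t_n^2}{c},
\]
which is not controlled by $\ell_n t_n^2$. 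For instance, data with $\ell_n=i(\mu,\alpha_n)=n^{-2}$ and $i(\mu,\beta_n)\asymp 1$ satisfy (\ref{eq:MLint-upper}), while $\sum_n t_n^2=\infty$.

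\textbf{The fix is to spread the twist over the entire collar.} Energy of order $\ell_n t_n^2$ is only possible this way, because the crossing family of the full collar has modulus $\asymp\ell_n$ (Lemma \ref{lem:collar-modulus}). This is exactly the option you set aside for fear of reintroducing $w_n$, and it is what the paper does. Let $A$ be the total weight entering $\alpha_n$ from one side, $b\le i(\mu,\beta_n)$, $k=\lfloor b/A\rfloor$ and $r=\{b/A\}$. The paper cuts the full height $2\theta$ of $Q$ into $2k+1$ arcs of length $A'=2\theta/(2k+1+r)$ and joins them by triangles and parallelograms. Their energies sum to $\lesssim A^2k^2\ell_n+A^2/\ell_n\lesssim \ell_n\, i(\mu,\beta_n)^2+i(\mu,\alpha_n)^2/\ell_n$.

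The gluing through the pants, which you flag as the main obstacle, is the less delicate part. The essential difficulty is this collar estimate.
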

 
 The ``only if'' part is established in the previous section. We need to prove the ``if'' part, which will be done by an explicit construction of a partial measured foliation that has the same intersection numbers with $\{ \alpha_n,\beta_n\}$ as $\mu$ does and whose total Dirichlet integral is comparable to the sum in (\ref{eq:MLint-upper}). To construct the partial measured foliation, we will use the geometry of the pants decomposition to decompose $X$ into pieces, constructing the foliation on each piece separately while ensuring that the transverse measures of the foliations match along the contacts between pieces. 
 
 We first construct orthogeodesic arcs in each pair of pants of the pants decomposition based on the position of the support $|\mu |$ of the measured lamination $\mu$. For any pair of pants with cuffs $\alpha_{j_1}$, $\alpha_{j_2}$, and $\alpha_{j_3}$, we draw orthogeodesic arcs between the pairs of cuffs that are intersected by the same geodesic of $|\mu |$. An orthogeodesic may go from a cuff to itself, which happens if a single geodesic of $|\mu |$ connects a single cuff to itself before leaving the pair of pants. In a single pair of pants with three cuffs, we can draw at most three orthogeodesics, and they are disjoint. If we draw fewer than three, then we add more orthogeodesics to have exactly three disjoint orthogeodesics in the given pair of pants. If a pair of pants has two or one cuff, then we can draw at most two or one orthogeodesics on the cuffs. In this process, the pants decomposition of $X$ is further divided into right-angled hexagons (when a pair of pants has three cuffs) and bigons with right angles and a single puncture.
 
  \begin{figure}[h]
 	\leavevmode \SetLabels
 	\endSetLabels
 	\begin{center}
 		\AffixLabels{\centerline{\epsfig{file =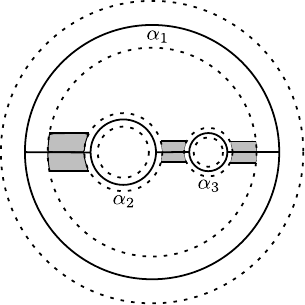, scale=1} }}
 		\vspace{-20pt}
 	\end{center}
 	\caption{Collars and connections between them.} 
 	\label{fig:connect}
 \end{figure}
 
 We draw standard collars around the cuffs and a neighborhood of the orthogeodesics of a certain width that will be determined later. Then the support $|\mu |$ of the measured geodesic lamination $\mu$ can be homotoped to this neighborhood (see Figure \ref{fig:connect}). The neighborhood can be collapsed to a train track that weakly carries $\mu$ (see \cite{Saric20}), and the weights on the edges of the train track are the transverse measure of the geodesics that are collapsed onto the edge. The edges of the train track are either arcs on the cuffs or arcs inside a pair of pants connecting cuffs. The weights on the edges connecting cuffs will inform us about the transverse measure of the partial foliation we will construct inside the neighborhood of the orthogeodesics. The foliation that we will construct inside the collar neighborhoods of the cuffs will depend on the weights on the edges of the cuff and the weights of the edges adjacent to the cuff, as these determine the amount of twisting around the cuff of the geodesic of $|\mu |$. This construction follows the outline of \cite{Saric-ft}, but it is significantly more involved since the cuff lengths approach zero, which allows the train track weights to be large and poses challenges when proving that the Dirichlet integral is finite. Therefore, we carefully construct the foliation with the above constraints.

We first consider the size of the boundary of the standard collar neighborhood of $\alpha_n$.

\begin{lem}
	\label{lem:bdry_collar_projection}
	Let $\alpha$ be a simple closed geodesic on a Riemann surface $X$. Let $\mathcal{C}(\alpha )$ be the standard collar neighborhood of $\alpha$. Let $\partial_i\mathcal{C}(\alpha )$ for $i=1,2$ be the two boundary components of $\mathcal{C}(\alpha )$. \medskip
	
	\noindent \textup{(i)} The orthogonal projection from $\partial_i\mathcal{C}(\alpha )$ onto $\alpha$ is an affine map for the distances on $\partial_i\mathcal{C}(\alpha )$ and $\alpha$ given by the hyperbolic arc-lengths. \smallskip
	
	\noindent \textup{(ii)} Given $C>0$ there exists $M>0$ such that if
	$$
	\ell_X(\alpha )\leq C
	$$
	then
	$$
	2\leq \ell_X(\partial_i\mathcal{C}(\alpha ))\leq M
	$$
	for $i=1,2$, where $\ell_X(\partial_i\mathcal{C}(\alpha ))$ is the hyperbolic length of the curve $\partial_i\mathcal{C}(\alpha )$.
\end{lem}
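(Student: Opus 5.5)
We work in Fermi coordinates around $\alpha$ and reduce everything to the explicit hyperbolic formula for equidistant curves. Lift to $\mathbb{H}$ so that the imaginary axis covers $\alpha$. A point of $\mathbb{H}$ has coordinates $(t,s)$, where $t$ is the signed arc-length along the axis of the foot of the perpendicular and $s$ is the signed distance to the axis. In these coordinates the hyperbolic metric is
$$ds_{hyp}^2=\cosh^2 s\,dt^2+ds^2 .$$
The deck transformation corresponding to $\alpha$ is $(t,s)\mapsto (t+\ell_X(\alpha),s)$. The standard collar lifts to the strip $|s|<w$, where
$$w=\sinh^{-1}\frac{1}{\sinh(\ell_X(\alpha)/2)}.$$
Its embeddedness is the collar lemma \cite[Theorem 4.1.1]{Buser}, which we may assume.

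For (i), each boundary component $\partial_i\mathcal{C}(\alpha)$ is the image of an equidistant line $s=\pm w$. The orthogonal projection onto $\alpha$ is $(t,\pm w)\mapsto (t,0)$. Along $s=\pm w$ the arc-length element is $\cosh w\,dt$, while along $\alpha$ it is $dt$. Hence the projection multiplies arc-lengths by the constant factor $1/\cosh w$, so it is affine with respect to arc-length parameters. Passing to the quotient by the deck transformation keeps this property, and it also gives
$$\ell_X(\partial_i\mathcal{C}(\alpha))=\ell_X(\alpha)\cosh w .$$

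For (ii), write $L=\ell_X(\alpha)$ and $u=\sinh(L/2)$. Then
$$\cosh w=\sqrt{1+u^{-2}},\qquad \ell_X(\partial_i\mathcal{C}(\alpha))=L\sqrt{1+u^{-2}}=\frac{L}{\sinh(L/2)}\sqrt{1+\sinh^2(L/2)}=\frac{L\cosh(L/2)}{\sinh(L/2)}=\frac{L}{\tanh(L/2)} .$$
The function $f(L)=L/\tanh(L/2)$ is continuous and increasing on $(0,\infty)$. Its limit as $L\to 0$ is $2$, since $\tanh(L/2)\sim L/2$.
\begin{itemize}
\item For the lower bound, $\tanh(x)<x$ for $x>0$ gives $f(L)>2$, so $\ell_X(\partial_i\mathcal{C}(\alpha))\ge 2$.
\item For the upper bound, set $M=C/\tanh(C/2)$. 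Since $f$ is increasing, $\ell_X(\alpha)\le C$ gives $\ell_X(\partial_i\mathcal{C}(\alpha))\le M$.
\end{itemize}

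There is no real obstacle; the only point needing care is checking the Fermi-coordinate metric formula. One can verify it from $z=e^{t}(\tanh s+i\,\mathrm{sech}\, s)$, or simply quote the standard formula that the hypercycle at distance $s$ has length $\ell\cosh s$ \cite{Buser}.
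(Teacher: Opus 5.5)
Your proof is correct and follows essentially the same route as the paper. The paper lifts $\alpha$ to the imaginary axis and $\partial_i\mathcal{C}(\alpha)$ to a Euclidean ray from $0$, which is your equidistant line $s=\pm w$ in Fermi coordinates; it gets $\ell_X(\partial_i\mathcal{C}(\alpha))=\ell_X(\alpha)\cosh w=\ell_X(\alpha)\coth\frac{\ell_X(\alpha)}{2}$ and takes $M=C\coth\frac{C}{2}$ by monotonicity, exactly as you do (your version also states the scaling factor in the right direction and justifies the lower bound explicitly via $\tanh x<x$).
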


\begin{proof}
	Assume that the universal covering of $X$ by the upper half-plane $\mathbb H$ is such that $\alpha$ is covered by the $y$-axis and $\partial_i\mathcal{C}(\alpha)$ is covered by a ray $r$ that subtends an angle $\theta$ with the $y$-axis at $0$. Let $a, b$ be two points on $r$ and denote the hyperbolic arc length along $r$ between $a$ and $b$ by $\ell_{\mathbb H}(a, b)$. Similarly, let $a', b'$ be the two points obtained by orthogonal projections of $a, b$ respectively from $r$ onto the $y$-axis and denote the hyperbolic length along the $y$-axis between $a'$ and $b'$ by $\ell_{\mathbb H}(a', b')$. An elementary calculation shows that $\ell_{\mathbb H}(a', b') =\ell_{\mathbb H}(a, b) \frac{1}{\cos \theta} = \ell_{\mathbb H}(a, b) \cosh d$, where $d$ is the hyperbolic distance from any point on $r$ to the $y$-axis (see Beardon \S 7.20 in \cite{Bear}). Since $d$ is fixed (independent of $a,b\in r$), $\textup{(i)}$ follows.
	
	To show \textup{(ii)}, note that
	\begin{align*}
		\ell_X(\partial_i\mathcal{C}(\alpha )) 
				&= \ell_X(\alpha) \cosh \sinh^{-1} \frac{1}{\sinh\frac{1}{2} \ell_X(\alpha)}\\
		&= \ell_X(\alpha) \sqrt{1+\frac{1}{\sinh^2\frac{1}{2} \ell_X(\alpha)}}\\
		&= \ell_X(\alpha) \coth \frac{1}{2} \ell_X(\alpha).
	\end{align*} 
	Then, $\textup{(ii)}$ holds with $M={C} \coth \frac{C}{2}$ since $x \coth \frac{x}{2}$ is an increasing function and bounded below by $2$ for $x>0$.
\end{proof}

Let $\mu$ be a fixed measured lamination which satisfies (\ref{eq:MLint-upper}). We consider a homotopy that moves the support of $\mu$ into the standard collars of the cuffs $\{\alpha_n\}$ and into neighborhoods of the orthogonodesics. Then we consider the train tracks, which weakly carry $\mu$, obtained by appropriately smoothing the connection between the orthogeodesics and the cuffs in the neighborhood of the feet (see \cite{Saric20}). By smoothing at the feet, the orthogeodesic is replaced by a single edge of the train track tangent to the cuff in one direction, and all the smoothings on each side of every cuff are in the same direction (see Figure \ref{fig:smoothing}). The $\mu$-transverse measure of the support of $\mu$ that is homotoped into a neighborhood of the orthogeodesic is the {\it weight} that we assigned to the corresponding edge of the train track. Similarly, the weights on the edges on the cuffs are given by the $\mu$-transverse measure of the geodesics of the support of $\mu$ that are homotoped into the collar neighborhood of the cuff. There is a one-to-one correspondence between the measured laminations carried by a train track and the weights on the train track that satisfy switch conditions at the vertices (see \cite{Saric20}).

\begin{figure}[h]
	\leavevmode \SetLabels
	\endSetLabels
	\begin{center}
		\AffixLabels{\centerline{\epsfig{file =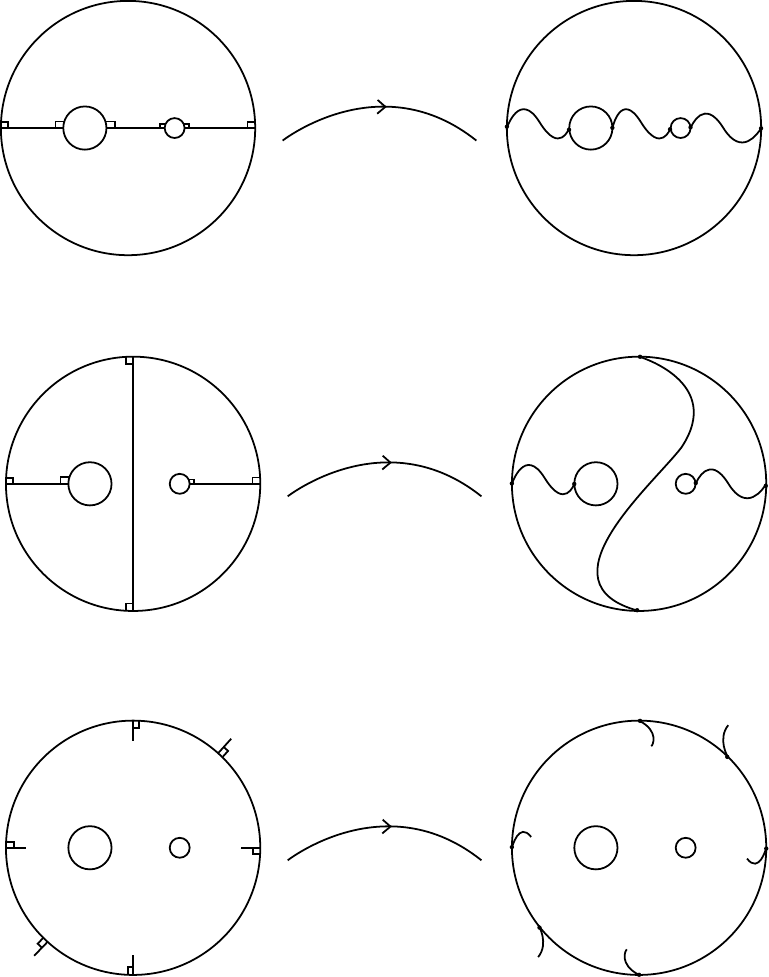, scale=0.5} }}
		\vspace{-20pt}
	\end{center}
	\caption{Smoothing of the connections at the feet of the orthogeodesics.} 
	\label{fig:smoothing}
\end{figure}

\begin{figure}[h]
	\leavevmode \SetLabels
	\endSetLabels
	\begin{center}
		\AffixLabels{\centerline{\epsfig{file =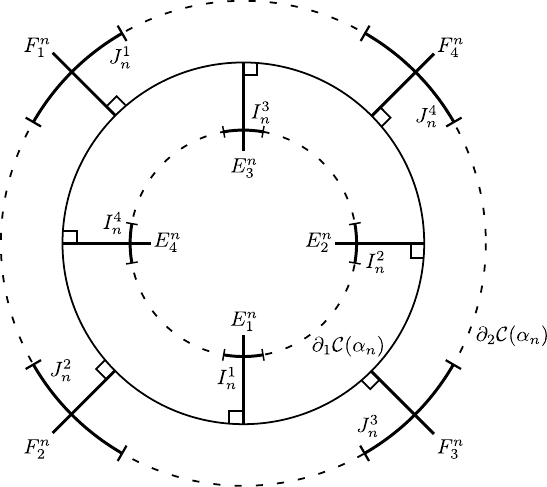, scale=0.7} }}
		\vspace{-20pt}
	\end{center}
	\caption{Orthogeodesics and collars.} 
	\label{fig:ortho}
\end{figure}

Let $e_i^n$, for $1\leq i\leq 4$, be the orthogeodesics from one side of $\alpha_n$ and $E_i^n$ be their intersection points with the boundary of the collar $\partial_1\mathcal{C}(\alpha_n)$, and let $f_j^n$, for $1\leq j\leq 4$, be the orthogeodesics from other side of $\alpha_n$ and $F_i^n$ be their intersection points with the boundary of the collar $\partial_2\mathcal{C}(\alpha_n)$ (see Figure \ref{fig:ortho}).

We define mutually disjoint subarcs $I^i_n$ of $\partial_1\mathcal{C}(\alpha_n)$ and $J^j_n$ of $\partial_2\mathcal{C}(\alpha_n)$ that are centered at the feet $E_i^n\in \partial_1\mathcal{C}(\alpha_n)$ and $F_j^n\in \partial_2\mathcal{C}(\alpha_n)$ with the property that the ratio of their lengths to $\ell_X(\partial_1\mathcal{C}(\alpha_n))$ and $\ell_X(\partial_2\mathcal{C}(\alpha_n))$ is $q>0$, respectively. 
In addition, we require that the ratio of the length of each component intervals of $\partial_1\mathcal{C}(\alpha_n))\setminus \cup_iI^i_n$ to the length of $\ell_X(\partial_1\mathcal{C}(\alpha_n))$ is at least $4q$, as well as the ratio of the length of each component intervals of $\partial_2\mathcal{C}(\alpha_n))\setminus \cup_jJ^j_n$ to $\ell_X(\partial_2\mathcal{C}(\alpha_n))$ is at least $4q$.
The constant $q>0$ is independent of $n$, and the subarcs  $I^i_n$ are pairwise disjoint, as well as the subarcs $J^j_n$. To see that such a constant $q>0$ exists, using Lemma \ref{lem:bdry_collar_projection}, it is enough to establish that the shortest distance between the feet of $e_i^n$ on $\alpha_n$ is bounded from below by $5q\ell_Y(\alpha_n)$. If the orthogeodesics are positioned as in the top line of Figure \ref{fig:smoothing}, then the two feet divide the cuff $\alpha_n$ into two equal parts by symmetry. In this case, we can take $q=\frac{1}{10}$. In the position of the second row of Figure \ref{fig:smoothing}, the inner cuffs can be of different sizes and therefore the cuff $\alpha_n$ is divided into four unequal subarcs. We establish that the ratio of their lengths is independent of $n$ in the lemma below. 

\begin{lem}
\label{lem:subarcs-size}
Let $P$ be a pair of pants with boundary components $\alpha_1$, $\alpha_2$ and $\alpha_3$ such that
$$
\ell (\alpha_i)\leq C
$$
for $i=1,2,3$, with some boundary component possibly a puncture. Consider three orthogeodesics connecting $\alpha_1$ to $\alpha_2$, 
$\alpha_1$ to $\alpha_3$, and $\alpha_1$ to itself. The cuff $\alpha_1$ is divided into four subarcs by the feet of the orthogeodesics. Then there exists $c>0$ (depending on $C$) such that the ratio of the lengths of the subarcs to the length $\ell (\alpha_1)$ is at least $c$. 
\end{lem}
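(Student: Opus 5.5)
We will cut $P$ along its three seams into two congruent right-angled hexagons and apply right-angled hexagon trigonometry (\cite[\S 7.19]{Bear}) to compute the feet of the three orthogeodesics explicitly. By front--back symmetry (the reflection of $P$ exchanging the hexagons), the orthogeodesic from $\alpha_1$ to itself is symmetric. Its two feet are mirror images in the seams $g_{12}$ and $g_{13}$ from $\alpha_1$ to $\alpha_2$ and $\alpha_3$. The orthogeodesic from $\alpha_1$ to $\alpha_2$ lies on the same side of the self-orthogeodesic as $\alpha_2$ and is homotopic rel boundary to the seam $g_{12}$. It need not coincide with $g_{12}$, but we will identify its foot using the symmetry, or replace it by $g_{12}$ after noting that the orthogeodesic in a homotopy class of arcs between two boundary geodesics is unique. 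The same holds for $\alpha_1$ to $\alpha_3$. In the configuration of the second row of Figure \ref{fig:smoothing}, the four feet on $\alpha_1$ are then: the foot of $g_{12}$ (the midpoint of the front side in the hexagon picture), the foot of $g_{13}$, and the two symmetric feet $p,\bar p$ of the self-orthogeodesic. So $\alpha_1$ splits into arcs of lengths $2a$, $b$, $2a'$, $b$ (or a similar pattern), where each half $\ell(\alpha_1)/2$ is divided by $p$ into pieces of lengths $x,y$ with $x+y=\ell(\alpha_1)/2$.

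To compute $x$ and $y$, we will use the perpendicular from $\alpha_1$ to the seam $g_{23}$ joining $\alpha_2$ and $\alpha_3$. By symmetry, the self-orthogeodesic of $\alpha_1$ meets $g_{23}$ orthogonally. This is exactly the orthogeodesic $\beta_1$ configuration from the proof of Proposition \ref{prop:upperbdd-beta_n}. The hexagon is cut into two right-angled pentagons, and \cite[Theorem 7.19.2]{Bear} gives
\[
\sinh x\,\sinh h=\cosh\tfrac{\ell(\alpha_2)}{2},\qquad \sinh y\,\sinh h=\cosh\tfrac{\ell(\alpha_3)}{2},
\]
where $h$ is the length of the perpendicular. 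This is exactly the computation leading to \eqref{eq:xy}. Dividing the two identities gives $1/\cosh(C/2)\le \sinh x/\sinh y\le\cosh(C/2)$. Since $x+y=\ell(\alpha_1)/2\le C/2$ is bounded, $\sinh$ is comparable to the identity on $[0,C/2]$. Hence $x/y$ is bounded above and below by constants depending only on $C$, and so $x,y\ge c'\ell(\alpha_1)$.

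The four subarcs of $\alpha_1$ are then unions or pieces of these $x$ and $y$ segments. More precisely, the feet of $g_{12}$ and $g_{13}$ sit at the endpoints of the front half, and $p$, $\bar p$ sit at distance $x$ from one of them. So the subarcs have lengths $2x$, $2y$ (through the back) and the remaining pieces, each a fixed multiple of $x$ or $y$. This gives ratio at least $c=\min(2M_1,\dots)$, depending only on $C$.

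The punctured case will be handled by the same formulas with $\ell(\alpha_j)=0$, so that $\cosh 0=1$; the formulas hold in the limit. If $\alpha_1$ itself were a puncture there would be nothing to prove.

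The main obstacle is the combinatorial bookkeeping: checking that, in the configuration at hand, the feet of the non-self orthogeodesics really are the seam feet, and that the feet of the self orthogeodesic are exactly the perpendicular feet $p,\bar p$ of the pentagon decomposition. I would settle this with the uniqueness of orthogeodesics in a homotopy class together with the front--back reflection, arguing as in the claim in Proposition \ref{prop:upperbdd-beta_n} that $\beta_n$ lies between $\beta_1$ and $\beta_2$.
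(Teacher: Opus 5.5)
Your argument is correct and is essentially the paper's proof: the front--back reflection makes the self-orthogeodesic of $\alpha_1$ symmetric and orthogonal to the seam joining $\alpha_2$ and $\alpha_3$, and the pentagon identities behind \eqref{eq:xy} then bound both pieces $x,y$ below by a constant (depending on $C$) times $\ell(\alpha_1)$. One bookkeeping correction: the orthogeodesics to $\alpha_2$ and $\alpha_3$ simply are the seams by uniqueness, and their feet are the endpoints (not the midpoint) of the front half of $\alpha_1$, so the four subarcs are exactly $x,y,y,x$ in cyclic order; there are no pieces of length $2x$, $2y$ or any other extra pieces.
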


\begin{proof}
Let $P$ be the pair of pants with boundary components $\alpha_i$ for $i=1,2,3$. Assume that $\alpha_1$ is a cuff while $\alpha_2$ and $\alpha_3$ are either cuffs or punctures. Draw orthogeodesics from $\alpha_1$ to $\alpha_2$, and $\alpha_1$ to $\alpha_3$. If $\alpha_2$ or $\alpha_3$ is a puncture, then the orthogeodesic from $\alpha_1$ ends in the puncture; otherwise, it is orthogonal to both cuffs. Draw the orthogeodesic from $\alpha_1$ to itself, and the orthogeodesic from $\alpha_2$ to $\alpha_3$. The pair of pants $P$ is symmetric with respect to the reflection in the three orthogeodesics connecting pairwise different boundaries, and the orthogeodesic connecting $\alpha_1$ to itself is also symmetric in this reflection. Therefore, the orthogeodesic connecting $\alpha_1$ to itself is orthogonal to the orthogeodesic connecting $\alpha_2$ to $\alpha_3$. The feet of the orthogeodesics on $\alpha_1$ divide it into four subarcs, and this division is also invariant under the reflection. Therefore, the four subarcs have lengths $x$, $y$, $y$, and $x$ in the counterclockwise order. The lemma follows from Figure \ref{fig:subarclen} and equation (\ref{eq:xy}).

\begin{figure}[h]
	\leavevmode \SetLabels
	\endSetLabels
	\begin{center}
		\AffixLabels{\centerline{\epsfig{file =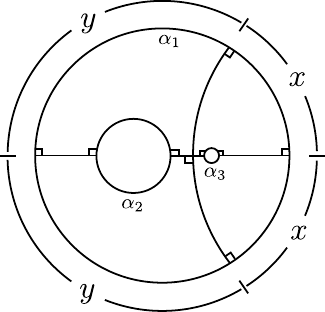, scale=0.85} }}
		\vspace{-20pt}
	\end{center}
	\caption{The lengths of subarcs on the cuffs.} 
	\label{fig:subarclen}
\end{figure}
\end{proof}

In the case of four orthogeodesics meeting a cuff from one side, we take $q=\frac{c}{5}$ where $c$ is from the above lemma. 
The lengths of $I_n^i$ are $q$ times the lengths of $\partial_1\mathcal{C}(\alpha_n)$, as well as the lengths of $J_n^j$ to be $q$ times the length of $\partial_2\mathcal{C}(\alpha_n)$. The intervals $I_n^i$ are mutually disjoint by Lemma \ref{lem:subarcs-size}, as well as the intervals $J_n^j$. Moreover, the complementary intervals to $\cup_iI_n^i$ and to $\cup_jJ_n^j$ have lengths bounded below by at least $4q\ell_X(\partial_1\mathcal{C}(\alpha_n))=4q\ell_X(\partial_2\mathcal{C}(\alpha_n))$. By Lemma \ref{lem:bdry_collar_projection}, the ratios of the lengths of the orthogonal projections of $I_n^i$ and $J_n^j$ to $\alpha_n$ over the lengths of $\alpha_n$ are also $q$. 

We denote the projections of $I_n^i$ and $J_n^j$ onto $\alpha_n$ by $P_{\alpha_n}(I_n^i)$ and $P_{\alpha_n}(J_n^j)$, respectively. In addition to the above choices, we twist around each $\alpha_n$ such that one interval of $\alpha_n\setminus \cup_i P_{\alpha_n}(I_n^i)$ and one interval of $\alpha_n\setminus \cup_j P_{\alpha_n}(J_n^j)$ match at their midpoint. 
The multi-twist map along $\{\alpha_n\}_n$ maps $X$ to a Riemann surface $Y$ by the twisting at most $\frac{1}{2}\ell_X(\alpha_n)$. Since $\ell_X(\alpha_n)$ is bounded above, the multi-twist map is homotopic to a quasiconformal map $f:X\to Y$ (see \cite{ALPS}).

If we prove that the surface $Y$ has a partial measured foliation $\mathcal{F}$ homotopic to $f_*(\mu )$ with $\mathcal{D}_Y(\mathcal{F})<\infty$, then $\mathcal{D}_X[(f^{-1})_*(\mathcal{F})]\leq K(f^{-1})\mathcal{D}_Y(\mathcal{F})<\infty$, where $K(f^{-1})$ is the quasiconformal constant of $f^{-1}$. Then, by \cite[Theorem 1.6]{Saric23}, there exists a unique finite-area holomorphic quadratic differential $\varphi\in A(X)$ such that $\mu_{\varphi}=\mu$. Therefore, it is enough to establish the statement of the theorem for $Y$.

\vskip .2 cm

The following lemma constructs a measured foliation in the collar neighborhood of $\alpha_n$ which corresponds to the part of the support of $\mu$ that intersects $\alpha_n$ and possibly twists many times around $\alpha_n$. This is the most demanding part of the construction, as the lengths of $\alpha_n$ can be arbitrarily small, which makes the collar long and allows for high twisting around $\alpha_n$ (which is reflected in the condition $\sum_ni(\mu ,\beta_n)^2\ell_Y(\alpha_n)<\infty$). 

\begin{lem}
	\label{lem:collar-foliation}
	Let $\alpha_n$ be an arbitrary cuff of the pants decomposition of $Y$ and let $\mathcal{C}(\alpha_n)$ be the standard collar with the above constructed intervals $I_n^i\subset \partial_1\mathcal{C}(\alpha_n)$ and $J_n^j\subset\partial_2\mathcal{C}(\alpha_n)$. We assume that the support of $\mu$ does not contain the cuff $\alpha_n$. Then there exists a measured foliation $\mathcal{F}_n$ of $\mathcal{C}(\alpha_n)$ whose leaves have one endpoint  $\cup_iI_n^i$ and another endpoint in $\cup_jJ_n^j$, and that realizes in the homotopy the weights of the train track corresponding to $\mu$ restricted to the edges on $\alpha_n$ and the edges with vertices in common with $\alpha_n$. Moreover, the transverse measure to each $I_n^i$ and $J_n^j$ is proportional to their hyperbolic arc length, with total mass equal to the weight of the corresponding train track edge and
	$$
	\mathcal{D}_{\mathcal{C}(\alpha_n)}(\mathcal{F}_n)\leq L\Big{[}\frac{i(\mu,\alpha_n)^2}{\ell_Y(\alpha_n)}+\ell_Y(\alpha_n)i(\mu ,\beta_n)^2\Big{]}
	$$
	for all $n$ and fixed $L>0$.
\end{lem}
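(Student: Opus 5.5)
I will work in explicit conformal coordinates on the collar. Lift $\mathcal{C}(\alpha_n)$ to $\mathbb{H}$ so that $\alpha_n$ is covered by the $y$-axis and apply $z\mapsto \log z$. The collar becomes a flat annulus $A_n=\{(s,t): s\in\mathbb{R}/\ell_Y(\alpha_n)\mathbb{Z},\ |t-\pi/2|\le \theta_n\}$, with hyperbolic angle $\theta_n=\arctan\sinh w_n$, where $w_n=\sinh^{-1}(1/\sinh(\ell/2))$. Since $\theta_n<\pi/2$, the annulus $A_n$ has circumference $\ell:=\ell_Y(\alpha_n)$ and height $2\theta_n\in[c_0,\pi)$, with $c_0>0$ depending only on the upper bound $C$ on cuff lengths. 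Its modulus is therefore comparable to $\ell$, consistent with Lemma \ref{lem:collar-modulus}. By Lemma \ref{lem:bdry_collar_projection}(i), the intervals $I_n^i$ and $J_n^j$ correspond to intervals on the two boundary circles of $A_n$ of Euclidean length $q\ell$, and arc length on them is affine in the $s$-coordinate. The Dirichlet integral is conformally invariant, so it suffices to build $\mathcal{F}_n$ on $A_n$ and compute Euclidean energies.

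Next I record what must be realized. The train track weights on the edges incident to $\alpha_n$ from side 1 give masses $a_i\ge 0$ on $I_n^i$, with $\sum_i a_i=i(\mu,\alpha_n)$. The masses $b_j$ on $J_n^j$ satisfy $\sum_j b_j=i(\mu,\alpha_n)$. The edges on $\alpha_n$ carry the twist data. Every leaf crossing the collar must enter through some $I_n^i$, wind a total of $k$ times plus a fractional amount around the annulus, and exit through some $J_n^j$. Up to a bounded error of order $i(\mu,\alpha_n)$, the total winding mass $\tau$ (the weight on the edges lying on $\alpha_n$) is controlled by $i(\mu,\beta_n)$. Indeed, $\beta_n$ crosses $\alpha_n$ once or twice, and $i(\mu,\beta_n)$ is a finite linear combination of the weights that dominates the cuff-edge weights minus a multiple of $i(\mu,\alpha_n)$. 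I would state this as $\tau\le c_1 i(\mu,\beta_n)+c_2 i(\mu,\alpha_n)$, using the remark after Proposition \ref{prop:upperbdd-beta_n} about expressing weights via intersection numbers.

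The construction proceeds in three stages. I split $A_n$ vertically into three horizontal bands of height $\asymp 1$.

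In the bottom band, I push the mass from the intervals $I_n^i$ (each of length $q\ell$) so that it spreads uniformly over a full circle. This is done with a foliation given by a transverse-measure function $v$ that interpolates linearly in $t$ between the measure "density $a_i/(q\ell)$ on $I_n^i$" and the "uniform density $i(\mu,\alpha_n)/\ell$ on the circle", keeping the required order of the strands. The gradient of $v$ is $O(i(\mu,\alpha_n)/\ell)$. Since the band has area $\asymp \ell$, its energy is $O(i(\mu,\alpha_n)^2/\ell)$. Here I need the gaps of length at least $4q\ell$ between intervals so that the leaves can be sheared without collisions.

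The top band does the same from the circle down to the intervals $J_n^j$.

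In the middle band, I use the linear twisting foliation, whose leaves are straight segments of slope corresponding to horizontal displacement $D=\tau\ell/i(\mu,\alpha_n)$. This is exactly the displacement needed to realize the twist mass $\tau$, plus a fractional correction of size at most $\ell$. On this band, $v(s,t)=\frac{i(\mu,\alpha_n)}{\ell}(s-D t/h)$. Hence $|\nabla v|^2\lesssim \frac{i(\mu,\alpha_n)^2}{\ell^2}(1+D^2)$, and over area $\asymp\ell$ the energy is
\[
\lesssim \frac{i(\mu,\alpha_n)^2}{\ell}+\frac{i(\mu,\alpha_n)^2D^2}{\ell}=\frac{i(\mu,\alpha_n)^2}{\ell}+\tau^2\ell ,
\]
which gives the claimed bound after inserting the estimate for $\tau$.

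Finally, matching the transverse measures on the band boundaries follows from the coordinate-change rule (\ref{eq:coord-change}), and homotopy equivalence with the train track follows because the winding number and the entry/exit assignments were chosen to match the weights.

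The main obstacle is the bookkeeping between train track weights and the twist. I must show that the winding mass $\tau$, including the possibly signed and fractional twisting and the ordering of strands between the different $I_n^i$ and $J_n^j$, is bounded by $C(i(\mu,\beta_n)+i(\mu,\alpha_n))$ uniformly in $n$. I must also ensure that the spreading bands never force extra winding of size $\gg 1/\ell$. I would handle this by first treating the zero-fractional-twist case, then absorbing the fractional part into the spreading bands at a cost of $O(i(\mu,\alpha_n)^2/\ell)$.
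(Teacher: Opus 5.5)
Your proposal is correct, and it reaches the bound by a different and simpler construction than the paper's.

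\textbf{The paper's route.} The paper cuts the annulus open into the rectangle $Q$ and builds a staircase. Triangles and trapezoids turn the vertical leaves leaving the $\widehat I^i_n$ into a horizontal strip of height $A'=2\theta/(2k+1+r)$ on the cut. Then $k$ triangle/parallelogram connectors each carry this strip once around the annulus while moving it down the cut. A mirror construction feeds the last strip into the $\widehat J^j_n$.

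The energy accounting is as follows. Each piece with horizontal leaves costs $\lesssim (A/A')^2A'\ell\asymp A^2k\ell$, so these pieces total $\lesssim A^2k^2\ell\lesssim b^2\ell$. The pieces with vertical leaves have height at most $A'$ and cost $O(A^2/\ell)$ in total. To make the pieces fit, the paper needs three preparatory devices: the fourth power of the Dehn twist (so that $k\ge 3$), the alignment of the midpoint $m$ at the cut, and the $4q$-gaps.

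\textbf{What your route buys.} Your spread--shear--concentrate scheme replaces all of this by the single shear $v=\frac{A}{\ell}(s-Dt/h)$. Its energy $\frac{A^2h}{\ell}+\frac{\tau^2\ell}{h}$ produces both terms of the lemma at once. It works for any sign and size of twist, including the fractional part, with no normalization.

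Your spreading bands are indeed $O(A^2/\ell)$. The monotone circle transport moves points by at most $\ell$, and the density along each leaf stays between $a_i/(q\ell)$ and $A/\ell$. The gaps are not actually needed there.

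Your boundary data on the $I^i_n$, $J^j_n$ (uniform density, prescribed mass) are the same as the paper's. So your $\mathcal F_n$ plugs into the later shrinking-and-trapezoid gluing unchanged.

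\textbf{The shared step: bounding the twist.} Both arguments rest on bounding the twist weight by $i(\mu,\beta_n)$, which the paper simply asserts as $b\le i(\mu,\beta_n)$. I would not derive this from the remark on recovering weights from intersection numbers. That inversion involves neighbouring curves and would spoil the exact form of the estimate. Argue directly in the collar instead:
\begin{itemize}
\item Each of the one or two arcs of the geodesic $\beta_n$ across $\mathcal C(\alpha_n)$ meets each strand of $\mu$ crossing the collar at least (relative winding $-1$) times. You can see this by counting intersections with translates in the annular cover.
\item All these intersections count toward $i(\mu,\beta_n)$, because everything is geodesic.
\item The shortest $\beta_n$ winds a bounded amount relative to the orthogeodesic reference.
\end{itemize}
Together these give $\tau\le i(\mu,\beta_n)+c\,i(\mu,\alpha_n)$. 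The resulting extra term $\ell\, i(\mu,\alpha_n)^2$ is absorbed into $i(\mu,\alpha_n)^2/\ell$ because $\ell\le C$.
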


\begin{proof}
	We will construct a partial foliation on $\mathcal{C}(\alpha_n)$. From the choice of the twisting above, one interval of $\alpha_n\setminus \cup_i P_{\alpha_n}(I_n^i)$ and one interval of $\alpha_n\setminus \cup_j P_{\alpha_n}(J_n^j)$ match at their midpoint, denoted by $m$.
	Then the distance from $m$ to any $P_{\alpha_n}(I_n^i)$ and any $P_{\alpha_n}(J_n^j)$ is at least $2q\ell_Y(\alpha_n)$ by the choice of $q$. 
	
	We assume that each homotoped leaf corresponding to a geodesic of $\vert \mu \vert$ intersects each geodesic arc in $\mathcal{C}(\alpha_n)$ orthogonal to $\alpha_n$ at least four times. If this not the case, we apply the appropriate fourth power of the Dehn twist around $\alpha_n$ to the measured lamination $\mu$ and obtain a new measured lamination $\mu_1$. Therefore, the total $\mu_1$-transverse measure on any geodesic arc orthogonal to $\alpha_n$ that connects $\partial_1\mathcal{C}(\alpha_n)$ and $\partial_2\mathcal{C}(\alpha_n)$ is at least three times the $\mu_1$-transverse measure of the leaves entering $\mathcal{C}(\alpha_n)$ from one side. Since the composition of these twists is a quasiconformal map, if $\mu_1$ can be realized by the horizontal foliation of a finite-area holomorphic quadratic differential, so can the original measured lamination $\mu$. By abuse of notation, we assume that $\mu$ has the above intersection property.

	Without loss of generality, we assume that the positive $ y$-axis covers $\alpha_n$ under the universal covering $\pi :\mathbb{H}\to Y$. Let $\widetilde{\mathcal{C}}(\alpha_n)$ be the set between the semicircles $\vert z \vert =1, \vert z \vert = e^{\ell_Y(\alpha_n)}$ and two Euclidean rays starting at $0$ that subtend an angle $0<\theta <\frac{\pi}{2}$ with the positive $y$-axis at $0$. When the angle $\theta$ is chosen appropriately, the set $\widetilde{\mathcal{C}}(\alpha_n)$ maps injectively (under the covering map $\pi$) to the standard collar $\mathcal{C}(\alpha_n)$ except for the identifications on the part of the boundary that lies on the semicircles. We further arrange that the intersection of semicircles $\vert z \vert =1, \vert z \vert = e^{\ell_Y(\alpha_n)}$ with the $y$-axis project to the point $m\in\alpha_n$. 
	
	For the restriction $\pi :\widetilde{\mathcal{C}}(\alpha_n) \to {\mathcal{C}}(\alpha_n)$, define $\partial_i \widetilde{\mathcal{C}}(\alpha_n)=\pi^{-1} ( \partial_i{\mathcal{C}}(\alpha_n))$, $\widetilde I^i_n=\pi^{-1}( I^i_n)$ and $\widetilde J^j_n=\pi^{-1}( J^j_n)$.
	 It follows that the lifts $\widetilde I^i_n \subset \partial_1 \widetilde{\mathcal{C}}(\alpha_n)$ of $I^i_n$ and 
	 $\widetilde J^j_n \subset \partial_2 \widetilde{\mathcal{C}}(\alpha_n)$ of $J^j_n$ lie on the two rays (see Figure \ref{fig:cover_of_collar}). 
	 
	 \begin{figure}[h]
		\leavevmode \SetLabels
		\endSetLabels
		\begin{center}
			\AffixLabels{\centerline{\epsfig{file =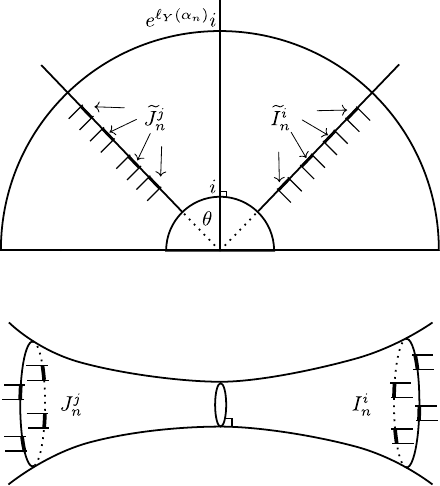, scale=1} }}
			\vspace{-20pt}
		\end{center}
		\caption{The cover of the collar.} 
		\label{fig:cover_of_collar}
	\end{figure}

	 Let $Q$ be the image of $\widetilde{\mathcal{C}}(\alpha_n)$ under $\log z$. Then $Q=[0, \ell_Y(\alpha_n)] \times [\frac{\pi}{2}-\theta, \frac{\pi}{2}+\theta]$ and the identification by the Euclidean translation of the vertical sides of $Q$ is conformal to $\mathcal{C}(\alpha_n)$. We divide $Q$ into several subdomains and define the partial foliation in each subdomain (see Figure \ref{fig:subfoliation}) such that it extends the partial foliation defined on rectangles connecting the boundaries of two collars, if any. The partial foliation of the rectangles connecting collars will be constructed later in the following lemma.
	 
	 \begin{figure}[h]
		\leavevmode \SetLabels
		\endSetLabels
		\begin{center}
			\AffixLabels{\centerline{\epsfig{file =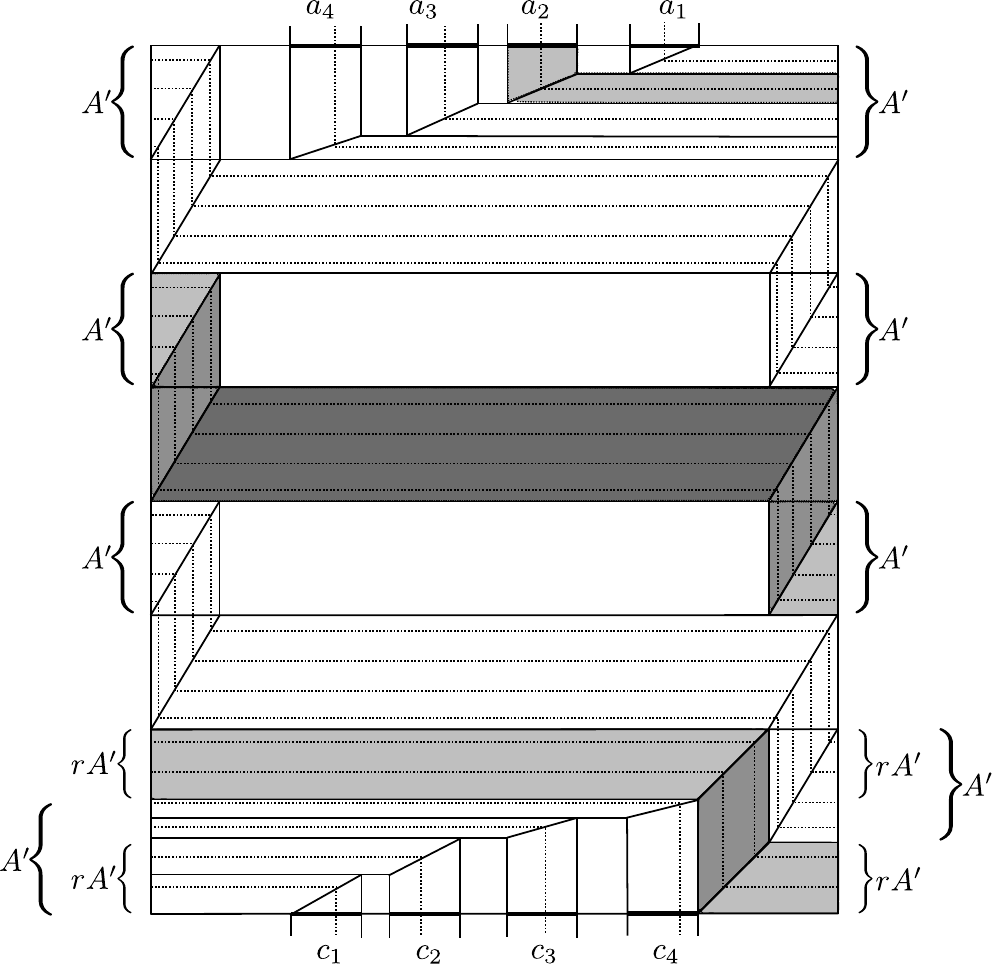, scale=0.7} }}
			\vspace{-20pt}
		\end{center}
		\caption{The partial measured foliation of $Q$.} 
		\label{fig:subfoliation}
	\end{figure}
	
	Let $b$ be the difference between the maximum of the weights of the edges of the train track on $\alpha_n$, and the sum of the weights of the edges of the train track meeting $\alpha_n$ from one side. (Note that the sum of the edges incoming to $\alpha_n$ from one side equals the sum of the weights of the edges incoming from the other sides by the switch condition.) 
	It follows that $b \leq i(\mu, \beta_n)$. We define a partial foliation of $Q$ depending on weights of the edges meeting $\alpha_n$ (i.e., the transverse measure of the support of $\mu$ that is homotoped to the neighborhoods of the orthogeodesics meeting $\mathcal{C}(\alpha_n)$). Let $\widehat I^i_n$ and $\widehat J^j_n$ be the images of $\widetilde I^i_n$ and $\widetilde J^j_n$ under the $\log z$ on the horizontal boundary sides of $Q$. The transverse measure of incoming geodesics corresponding to the interval $\widetilde I^i_n$ is denoted by $a_i$ (which is the same as the weight of the edge of the train track corresponding to the orthogedesic), and the transverse measure of incoming geodesics corresponding to the interval $\widetilde J^j_n$ is denoted by $c_j$, which is the same as the weight of the edge of the train track corresponding to the orthogedesic (see Figure \ref{fig:subfoliation}).

	In our setting, the lifts of neighborhoods of the orthogeodesics that meet $\partial_i\mathcal{C}(\alpha_n)$ under $\log z$ are rectangles that meet $Q$ at $\widehat I^i_n$ (or $\widehat J^j_n$) and their lengths are proportional to $\ell_Y(\alpha_n)$. From our previous discussion and by Lemma \ref{lem:subarcs-size}, we set the lengths to be $q\ell_Y(\alpha_n)$ where $q$ is independent of $n$. We denote the weights of the train track corresponding to the rectangles by $a_i$ (or $c_j$) and define $$A:=\sum_l a_l = \sum_s c_s,$$  $$k:=\Big{\lfloor} \frac{b}{A}\Big{\rfloor} \geq 3,$$ and $$r:=\Big{\{}\frac{b}{A}\Big{\}}.$$

	The vertical sides of $Q$ have lengths $2\theta$, and we form a partial measured foliation of $Q$ by partitioning the vertical sides into subarcs that the partial measured foliation connects and respects the gluings of the vertical sides by $z\mapsto z+\ell_Y(\alpha_n)$. We define
$$
A':=\frac{2\theta}{2k+1+r}.
$$
Then we have $2\theta =(2k+1)A'+rA'$ and we partition the vertical sides of $Q$ starting from the top into $2k+1$ subarcs of equal length $A'$ and a single subarc of length $rA'$ if $r>0$. Note that $r<1$, so the last subarc has length less than $A'$ if non-empty. 

The first interval on the right vertical side of $Q$ is used to connect all incoming foliation leaves from the intervals $\widehat I_n^i$ as follows. 
	We start our foliation from the rightmost interval $\widehat I^1_n$ by adding a right-angled triangle (part of a rectangle) denoted by $T_1$ whose top side is on $\widehat I^1_n$ with the length $q\ell_Y(\alpha_n)$ and the vertical side has length $a'_1=\frac{a_1}{\sum_la_l}A'=\frac{a_1}{A}A'$ (see Figure \ref{fig:triangle_foliation}). We define a partial foliation on $T_1$ by $v_1(x, y)=x$. The leaves are vertical lines, the transverse measure on horizontal lines is the Euclidean measure, and the Dirichlet integral satisfies $\mathcal D_{T_1}(v_1) \leq q\ell_Y(\alpha_n) \frac{a_1}{A}A'$. To make the total transverse measure equal to $a_1$, we use the function $\frac{a_1}{q\ell_Y(\alpha_n)}v_1$. The Dirichlet integral satisfies
	\begin{align}
	\label{eq:Dir(v_1)}
	\mathcal D_{T_1}(\frac{a_1}{q\ell_Y(\alpha_n)}v_1) 
	&\leq \frac{a_1^2}{q^2\ell_Y(\alpha_n)^2} q\ell_Y(\alpha_n) \frac{a_1}{A}A' \\
	&\leq C\frac{a_1^2}{\ell_Y(\alpha_n)}\leq C\frac{i(\mu ,\alpha_n)^2}{\ell_Y(\alpha_n)}, \nonumber
	\end{align}
	where $C$ is a constant independent of $n$ since $\frac{a_1}{A}\leq 1$ and $A'\leq 2\theta$. 
	The transverse measure of the foliation $\frac{a_1}{q\ell_Y(\alpha_n)}v_1$ on the hypothenuse of $T_1$ is a multiple of the Euclidean measure since the lengths on the horizontal line and the hypothenuse are proportional. 
	
	\begin{figure}[h]
		\leavevmode \SetLabels
		\endSetLabels
		\begin{center}
			\AffixLabels{\centerline{\epsfig{file=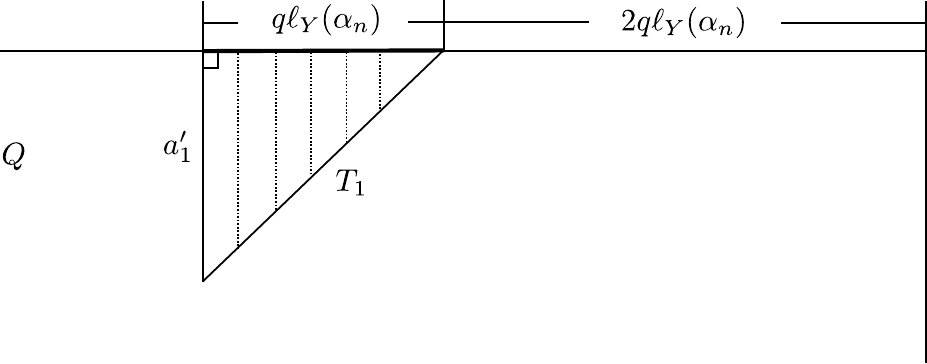, scale=0.6} }}
			\vspace{-20pt}
		\end{center}
		\caption{The partial measured foliation of a right-angled triangle $T_1$.} 
		\label{fig:triangle_foliation}
	\end{figure}

	We then extend $T_1$ by foliating a trapezoid denoted by $T'_1$ whose slanted side is in common with $T_1$, the bases are horizontal and extend to the right vertical side of $Q$, and the orthogonal side of height $a'_1$ (see Figure \ref{fig:trapezoid_fol1}). We define a partial foliation on $T'_1$ by $v'_1(x, y)=y$. The leaves are horizontal lines, the transverse measure to vertical lines is the Euclidean measure, and the Dirichlet integral satisfies 
	$$\mathcal D_{T'_1}(v'_1) \leq a_1'\ell_Y(\alpha_n) =\frac{a_1}{A}A'\ell_Y(\alpha_n).$$ 
	To achieve the total transverse measure $a_1$, we take the function $\frac{a_1}{a_1'}v_1'$. Then we have
	\begin{align}
	\label{eq:trapezoidT_1'}
	\mathcal D_{T_1'}(\frac{a_1}{a_1'}v_1')
	&\leq \frac{a_1^2}{(a_1')^2}\frac{a_1}{A}A'\ell_Y(\alpha_n)=\frac{a_1^2}{a_1^2(A')^2}A^2\frac{a_1}{A}A'\ell_Y(\alpha_n)\\ \nonumber
	&=\frac{A}{A'}a_1\ell_Y(\alpha_n) =\frac{A}{2\theta}(2k+r+1)a_1\ell_Y(\alpha_n)\\ 
	&\leq \frac{5A}{2\theta}\frac{b}{A}a_1\ell_Y(\alpha_n)\leq Cb^2\ell_Y(\alpha_n)\leq C i(\mu ,\beta_n)^2\ell_Y(\alpha_n), \nonumber
	\end{align}
where $C$ is independent of $n$ and we use the fact that $b>\sum_la_l$. The foliation defined by $\frac{a_1}{a_1'}v_1'$ has total transverse measure equal to $a_1$, and it is proportional to the Euclidean measure on the slanted side as before. Therefore, the two induced measures from the triangle and from the trapezoid are the same on the slanted side because they are both proportional to the Euclidean measure and have the same total mass. 

\begin{figure}[h]
		\leavevmode \SetLabels
		\endSetLabels
		\begin{center}
			\AffixLabels{\centerline{\epsfig{file=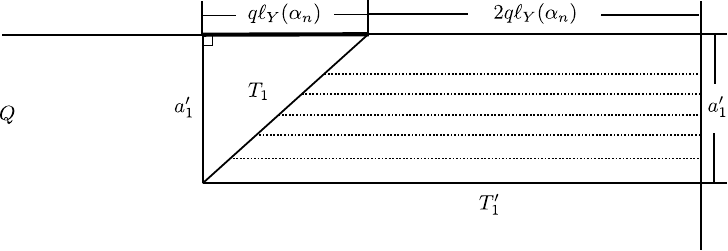, scale=0.75} }}
			\vspace{-20pt}
		\end{center}
		\caption{The partial measured foliation of a trapezoid $T_1'$.} 
		\label{fig:trapezoid_fol1}
	\end{figure}
	
	We continue this construction to the next interval $\widehat I^2_n$ by first adding a trapezoid (instead of a triangle) denoted by $T_2$ whose top side is on $\widehat I^2_n$ with the length $q\ell_Y(\alpha_n)$. Define $a_2'=\frac{a_2}{A}A'$. The lengths of the long and short vertical sides are $a'_1+a'_2$ and $a'_1$, respectively. Let $v_2(x,y)=x$ be the function that defines the foliation in the trapezoid $T_2'$ by vertical lines with transverse measure on the horizontal arcs equal to the Euclidean measure. Then we have
	$$
	\mathcal D_{T_2}(v_2)\leq q\ell_Y(\alpha_n)(a_1'+a_2')\leq  q\ell_Y(\alpha_n)A'.
	$$
To achieve the transverse measure $a_2$, we consider the function 
$\frac{a_2}{q\ell_Y(\alpha_n)}v_2$. The Dirichlet integral satisfies
\begin{equation}
\label{eq:T_2_dir}
\begin{array}l
\mathcal D_{T_2}(\frac{a_2}{q\ell_Y(\alpha_n)}v_2)\leq \frac{a_2^2}{q^2\ell_Y(\alpha_n)^2} q \ell_Y(\alpha_n)A'\leq C\frac{a_2^2}{\ell_Y(\alpha_n)}\leq 
C\frac{i(\mu ,\alpha_n)^2}{\ell_Y(\alpha_n)}.
\end{array}
\end{equation}

	\begin{figure}[h]
		\leavevmode \SetLabels
		\endSetLabels
		\begin{center}
			\AffixLabels{\centerline{\epsfig{file=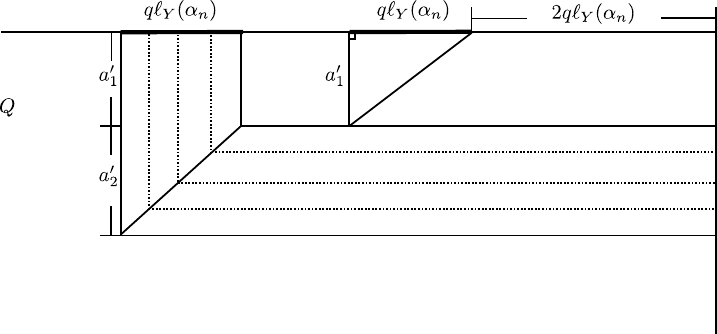, scale=0.75} }}
			\vspace{-20pt}
		\end{center}
		\caption{The partial measured foliation entering $\widehat I_n^2$.} 
		\label{fig:trapezoid_fol2}
	\end{figure}

	We then extend $T_2$ by adding a trapezoid $T'_2$ whose slanted side is in common with $T_2$, and whose height is $a'_2$. The function $\frac{a_2}{a_2'}v_2'$ with $v_2'(x,y)=y$ defines the foliation with the desired properties: the transverse measures agree on the slanted line and the total mass is $a_2$. The Dirichlet integral satisfies
	\begin{equation}
	\label{eq:T_2'_dir}
	\begin{array}l
	\mathcal D_{T_2'}(\frac{a_2}{a_2'}v_2')\leq \frac{a_2^2}{(a_2')^2}a_2'\ell_Y(\alpha_n)=\frac{a_2A}{A'}\ell_Y(\alpha_n)\leq 
	C{i(\mu ,\beta_n)^2}{\ell_Y(\alpha_n)}.
	\end{array}
	\end{equation}

	We repeat this process to all other intervals $\widehat I^i_n$ and obtain foliations whose Dirichlet integrals are bounded by the same bounds as for the first two intervals. 
	In this fashion, we cover the top subarc with length $A'$ of the right vertical side of $Q$ such that the transverse measure is proportional to the Euclidean measure and has total mass $A$.  
	The top subarc of length $A'$ on the right vertical side of $Q$ is identified by a Euclidean translation to the top subarc of length $A'$ on the left vertical side of $Q$. 

	At this point, we need to connect the top left subarc of length $A'$ to a subarc on the right vertical side of the same length $A'$ that is below the top subarc (see Figure \ref{fig:connections}). The topological constraint forces us to have the connection on the right, starting at a point at a distance $2A'$ from the top right vertex. By our definition, there are exactly $k$ connections of this type (see Figure \ref{fig:subfoliation}). Each of these connections has the same geometric shapes, and the transverse measures are all equal to $A$. We first consider the triangle $T_1$ from Figure \ref{fig:connections}. The function $\frac{A}{A'}v_1$ with $v_1(x,y)=y$ defines the foliation with horizontal leaves and the total mass $A$. We have
	$$
	\mathcal D_{T_1}(\frac{A}{A'}v_1)\leq \frac{A^2}{(A')^2}A' q \ell_Y(\alpha_n)\leq CA^2k\ell_Y(\alpha_n).
	$$
	Summing the Dirichlet integrals over all such triangles for all $k$ connections, we have that the Dirichlet integral over these pieces is less than
	$$
	CA^2k^2\ell_Y(\alpha_n)\leq Cb^2\ell_Y(\alpha_n)\leq Ci(\mu ,\beta_n)^2\ell_Y(\alpha_n).
	$$

\begin{figure}[h]
		\leavevmode \SetLabels
				\endSetLabels
		\begin{center}
			\AffixLabels{\centerline{\epsfig{file=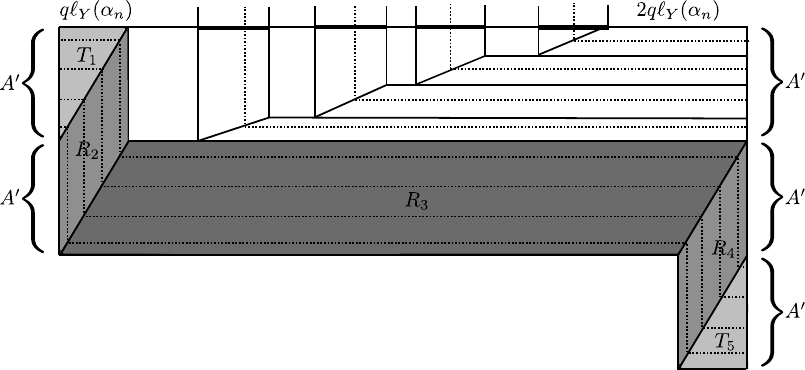, scale=0.85} }}
			\vspace{-20pt}
		\end{center}
		\caption{The partial measured foliation connecting the vertical sides of $Q$.} 
		\label{fig:connections}
	\end{figure}

The parallelogram $R_2$ from Figure \ref{fig:connections} is foliated by vertical lines using the function 
$\frac{A}{q\ell_Y(\alpha_n)}v_2$ where $v_2(x,y)=x$. We have
$$
\mathcal D_{R_2}(\frac{A}{q\ell_Y(\alpha_n)}v_2)\leq \frac{A^2}{q^2 \ell_Y(\alpha_n)^2}q\ell_Y(\alpha_n)A'\leq C \frac{A^2}{k}\frac{1}{\ell_Y(\alpha_n)}.
$$
Summing over $k$ connections, the Dirichlet integral over these pieces is less than 
$$
C\frac{A^2}{\ell_Y(\alpha_n)}\leq C\frac{i(\mu ,\alpha_n)^2}{\ell_Y(\alpha_n)}.
$$
The sum over all pieces $R_3$, $R_4$, and $T_5$, and over all $k$ connections gives the same bounds as for the above pieces.

	It remains to define the foliation that starts at the subarc of length $A'$ of the right vertical side of $Q$, a distance $2kA'$ from the top right vertex, and exits at the bottom of $Q$, according to the assigned weights $c_s$. Note that we must distribute the weight $A$ associated with the foliation over the interval of length $A'+rA'$ on the right vertical side, over the interval of the same length on the left vertical side, and exit the bottom of $Q$ in the intervals $\widehat J_n^j$ with weights $c_j$.
	
	\begin{figure}[h]
		\leavevmode \SetLabels
		\endSetLabels
		\begin{center}
			\AffixLabels{\centerline{\epsfig{file=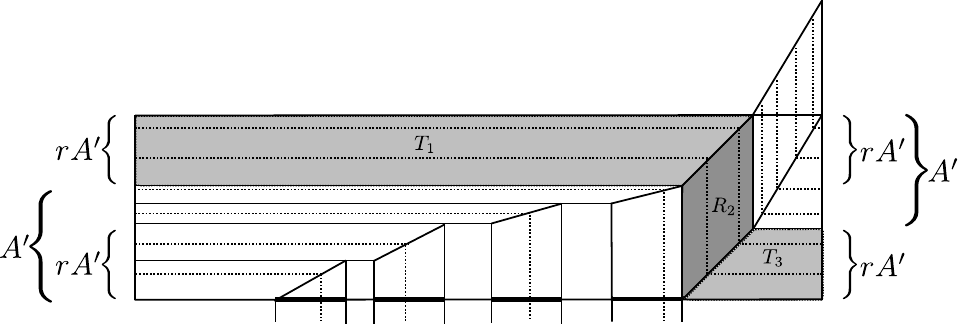, scale=0.85} }}
			\vspace{-20pt}
		\end{center}
		\caption{The partial measured foliation connecting the vertical sides of $Q$.} 
		\label{fig:bottom_fol}
	\end{figure}
	
	The foliation on the trapezoid $T_1$ (see Figure \ref{fig:bottom_fol}) is given by the function $\frac{rA}{rA'}v_1$ where $v_1(x,y)=y$. We have
	\begin{align}
	\label{eq:trap1}
	\mathcal D_{T_1}(\frac{rA}{rA'}v_1)
	&\leq \frac{A^2}{(A')^2}rA'\ell_Y(\alpha_n)\\
	&\leq \frac{A^2}{2\theta}(2k+1+r) r \ell_Y(\alpha_n)\leq C i(\mu ,\beta_n)^2\ell_Y(\alpha_n). \nonumber
	\end{align}
	
	The foliation on the parallelogram $R_2$ is given by the function 
	$\frac{rA}{q\ell_Y(\alpha_n)}v_2$ with $v_2(x,y)=x$. Then we have
	\begin{align}
	\label{eq:trap2}
	\mathcal D_{R_2}(\frac{rA}{q\ell_Y(\alpha_n)}v_2)
	&\leq \frac{r^2A^2}{q^2\ell_Y(\alpha_n)^2}2q\ell_Y(\alpha_n)A'\\
	&\leq \frac{2r^2 A^2}{q \ell_Y(\alpha_n)} A'
	\leq C \frac{i(\mu ,\alpha_n)^2}{\ell_Y(\alpha_n)}. \nonumber
	\end{align}
	The Dirichlet integral on the trapezoid $T_3$ is similar to $T_1$. Finally, the foliation connecting the bottom subarc of size $A'$ on the left vertical side of $Q$ to the intervals $\widehat J_n^j$ is similar to the top of the foliation and the estimates are the same. Collecting all the estimates, we get the statement of the lemma.
\end{proof}

Given an orthogeodesic arc connecting two cuffs $\alpha_n$ and $\alpha_m$ of the pants decomposition of $Y$, we denote by $g_{n,m}$ its subarc that connects the corresponding boundaries $\partial_1\mathcal{C}(\alpha_n )$ and $\partial_2\mathcal{C}(\alpha_m )$. Let $e_{n,m}$ be the edge of the train track corresponding to the orthogeodesic in our construction, and let $w(e_{n,m})\geq 0$ be the weight that equals the transverse measure of the part of the support of $\mu$ which homotopes to the edge. We define a partial measured foliation in the neighborhood of $g_{n,m}$, which models the weight $w(e_{n,m})$ and estimate its Dirichlet integral.

\begin{lem}
	\label{lem:connector}
	Let $g_{n,m}$ be the subarc of the orthogeodesic connecting the cuffs $\alpha_n$ and $\alpha_m$ that connects the boundaries $\partial_1\mathcal{C}(\alpha_n )$ and $\partial_2\mathcal{C}(\alpha_m )$ of their standard colar neighborhoods. We form a region $R_{n,m}$ that consists of all points at a distance of at most
	$$
	d_{n,m}=\frac{q}{2} {\min\{\ell_Y(\alpha_n),\ell_Y(\alpha_m)\}}= \frac{q}{2}{\ell_Y(\alpha_n)}
	$$
	from the orthogeodesic and between $\partial_1\mathcal{C}(\alpha_n )$ and $\partial_2\mathcal{C}(\alpha_m )$, where $q$ is the constant chosen above.
	
	Then there exists a measured foliation $\mathcal{F}_{n,m}$ on $R_{n,m}$ whose leaves are connecting $\partial_1\mathcal{C}(\alpha_n )$ and $\partial_2\mathcal{C}(\alpha_m )$, whose transverse measure is proportional to the hyperbolic arcs lengths on both  $\partial_1\mathcal{C}(\alpha_n )$ and $\partial_2\mathcal{C}(\alpha_m )$, whose total mass is $w(e_{n,m})$, and
	$$
	\mathcal{D}_R(\mathcal{F}_{n,m})\leq L\frac{i(\mu ,\alpha_n)^2}{\ell_Y(\alpha_n )}
	$$
	where $L>0$ depends only on the upper bound of the lengths of the cuffs of $Y$.
\end{lem}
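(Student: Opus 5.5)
The plan is to build the foliation in Fermi coordinates along the orthogeodesic and then reduce the Dirichlet integral estimate to the geometry of the strip $R_{n,m}$. First I control the length of $g_{n,m}$. Let $\delta$ be the full orthogeodesic between $\alpha_n$ and $\alpha_m$. Since both cuff lengths are bounded by the constant $C$ of the upper-bounded decomposition, the right-angled hexagon (or degenerate hexagon) formula in the pair of pants gives
$$\cosh \ell(\delta)\le \frac{\text{const}}{\sinh(\ell_Y(\alpha_n)/2)\sinh(\ell_Y(\alpha_m)/2)}.$$
Subtracting the two collar half-widths $\sinh^{-1}(1/\sinh(\ell_Y(\alpha_\bullet)/2))\approx \log(2/\ell_Y(\alpha_\bullet))$ shows that $\ell(g_{n,m})\le L_0$, with $L_0$ depending only on $C$. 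The same hexagon computation bounds $\ell(g_{n,m})$ below by a positive constant, since the collars sit at definite distance from the seams. In Fermi coordinates $(s,t)$ about $g_{n,m}$, with $s$ the arclength along $g_{n,m}$ and $|t|\le d_{n,m}$, the hyperbolic metric is $ds^2\cosh^2 t + dt^2$. Because $d_{n,m}\le qC/2$, this metric is uniformly bi-Lipschitz to the Euclidean metric $ds^2+dt^2$. The region $R_{n,m}$ is therefore, up to a uniformly quasiconformal change of coordinates, a Euclidean rectangle of height $2d_{n,m}$ and bounded width, except that its two short ends are not vertical segments but arcs of $\partial_1\mathcal C(\alpha_n)$ and $\partial_2\mathcal C(\alpha_m)$.

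Next I check how those ends look. The ends of $R_{n,m}$ are the subarcs of the collar boundaries centered at the feet $E^i_n$ and $F^j_m$. They have length comparable to $d_{n,m}$ by Lemma \ref{lem:bdry_collar_projection}: the collar boundary is equidistant from $\alpha$ and its length is $\ell_Y(\alpha)\coth(\ell_Y(\alpha)/2)$. On the $\alpha_n$ side the end has length about $q\ell_Y(\alpha_n)$, the width of $I^i_n$ rescaled. The collar boundary curve meets $g_{n,m}$ orthogonally and has geodesic curvature $\tanh$ of the collar width, which is bounded. So within the thin strip each end is a graph $s=\phi(t)$ over $|t|\le d_{n,m}$, with $|\phi'|$ bounded and $|\phi|\lesssim d_{n,m}^2$. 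Applying the map $(s,t)\mapsto(s-\phi_1(t)\text{-interpolation},t)$, which moves the ends to vertical segments, costs only a bounded quasiconformal distortion. This is the step I expect to need the most care: the two ends have very different scales, $\ell_Y(\alpha_n)$ versus $\ell_Y(\alpha_m)$, while the strip has uniform height $2d_{n,m}$ set by the smaller cuff. To handle it, on the $\alpha_m$ side I use only the central subarc of $J^j_m$ of length $2d_{n,m}$ that meets $R_{n,m}$. If the transverse measure on $J^j_m$ from the collar foliation must be spread over its full length, I insert a trapezoid foliation inside the collar $\mathcal{C}(\alpha_m)$, as in the triangles and trapezoids of Lemma \ref{lem:collar-foliation}. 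The energy of that trapezoid is bounded by $w^2/\ell_Y(\alpha_n)$, because its short side has length $\asymp\ell_Y(\alpha_n)$ and its depth is bounded.

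The foliation itself is then explicit. In the straightened rectangle $[0,\ell]\times[-d_{n,m},d_{n,m}]$ I take $v(s,t)=\frac{w(e_{n,m})}{2d_{n,m}}\,t$. Its leaves are the horizontal lines joining the two ends, and its transverse measure is proportional to $t$-length, hence, after the bi-Lipschitz straightening, proportional to hyperbolic arclength on each end, with total mass $w(e_{n,m})$. The Dirichlet integral is conformally invariant and changes by at most a factor $K$ under the quasiconformal straightening, so
$$\mathcal D_{R_{n,m}}(\mathcal F_{n,m})\le K\,\frac{w(e_{n,m})^2}{4d_{n,m}^2}\cdot 2d_{n,m}\,\ell(g_{n,m})\le K L_0\frac{w(e_{n,m})^2}{q\,\ell_Y(\alpha_n)}.$$
Finally, $w(e_{n,m})\le i(\mu,\alpha_n)$, because the edge $e_{n,m}$ meets $\alpha_n$ and its weight is part of the switch sum at $\alpha_n$. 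This gives the claimed bound with $L$ depending only on $C$ through $L_0$, $q$ and $K$.
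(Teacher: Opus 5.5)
Your route is the paper's. Its chart $f(\log z)$, with $f(s+it)=s-i\tanh^{-1}(\cos t)$, is exactly the Fermi chart about $g_{n,m}$, and it is uniformly quasiconformal because $d_{n,m}\le qC/2$. The leaves are the equidistant curves $v\propto t$. The energy is at most $K\,(w(e_{n,m})/2d_{n,m})^2$ times the area of a strip of height $2d_{n,m}$ and bounded length. Finally $w(e_{n,m})\le i(\mu,\alpha_n)$. Your hexagon bound $\ell(g_{n,m})\le L_0$ supplies what the paper only asserts.

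One step is false, however: $\ell(g_{n,m})$ has no positive lower bound. Write $D_\bullet=\sinh^{-1}(1/\sinh(\ell_Y(\alpha_\bullet)/2))$ for the collar half-widths and $\alpha_k$ for the third boundary of the pants. The hexagon formula gives
\[
\cosh\ell(\delta)-\cosh(D_n+D_m)=\frac{\cosh(\ell_Y(\alpha_k)/2)-1}{\sinh(\ell_Y(\alpha_n)/2)\sinh(\ell_Y(\alpha_m)/2)},
\]
so $\ell(g_{n,m})\le\frac12(\cosh(\ell_Y(\alpha_k)/2)-1)$. This tends to $0$ as $\ell_Y(\alpha_k)\to 0$, and it vanishes (the two standard collars touch) when the third boundary is a puncture, which the paper allows.

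Meanwhile the ends of $R_{n,m}$ are arcs of the convex collars. They recede from each other by an amount of order $\sinh^2 d_{n,m}$ at $|t|=d_{n,m}$, and this stays of order $q^2$ when $\alpha_n,\alpha_m$ are not short. So the leaves at $t=0$ and at $|t|=d_{n,m}$ have lengths with unbounded ratio. Your straightening $(s,t)\mapsto(h(s,t),t)$ onto a rectangle with vertical ends therefore cannot have uniformly bounded dilatation, since bounded dilatation forces $h_s\in[1/K,K]$.

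The straightening is not needed. As in the paper, observe that in Fermi coordinates $R_{n,m}$ lies in $[-\frac12\sinh^2 d_{n,m},\,\ell(g_{n,m})+\frac12\sinh^2 d_{n,m}]\times[-d_{n,m},d_{n,m}]$, and there $|\nabla v|=w(e_{n,m})/2d_{n,m}$ is constant. Your estimate then holds with $L_0$ replaced by $L_0+\sinh^2(qC/2)$.

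Two smaller points. First, a bi-Lipschitz comparison makes the transverse measure on the ends only comparable to arclength, not proportional. On the $\alpha_n$-end one has $\sinh t=\cosh D_n\,\sinh(a/\cosh D_n)$, with $a$ the arclength from the foot. So exact proportionality needs a further small adjustment; the paper's appeal to Lemma \ref{lem:bdry_collar_projection} here is equally loose, since that lemma concerns orthogonal projection onto $\alpha_n$ rather than the equidistant leaves. Second, the $\alpha_m$-side scale mismatch you treat is not part of this lemma. The paper handles it in the proof of Theorem \ref{thm:int-upper-bdd}, first shrinking the collar foliation by $f_\epsilon$ to free room. Without that, a trapezoid placed inside $\mathcal{C}(\alpha_m)$ would overlap $\mathcal{F}_m$.
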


\begin{rem}
The arc length measures on $\partial_1\mathcal{C}(\alpha_n )$ and $\partial_2\mathcal{C}(\alpha_m )$ are proportional to their othogonal projections onto geodesics meeting $g_{n,m}$ at a right angle.
\end{rem}

\begin{proof}
	We lift $\alpha_n, \alpha_m$ and $\partial_1\mathcal{C}(\alpha_n ), \partial_2\mathcal{C}(\alpha_m )$ to the upper half-plane $\mathbb{H}$ such that the lifts $\widetilde\alpha_n, \widetilde\alpha_m$ of $\alpha_n, \alpha_m$ respectively are orthogonal to the $y$-axis, the lifts $\widetilde\partial_1\mathcal{C}(\alpha_n ), \widetilde\partial_2\mathcal{C}(\alpha_m )$ of $\partial_1\mathcal{C}(\alpha_n ), \partial_2\mathcal{C}(\alpha_m )$ respectively are orthogonal to the $y$-axis, and the lift $\widetilde g_{n, m}$ of $g_{n, m}$ is covered by the $y$-axis (see Figure \ref{fig:orthog}). It follows that the lift of $\widetilde R_{n, m}$ of $R_{n, m}$ is between two rays that subtend an angle $\theta$ with the $y$-axis at $0$ such that (see \S 7.20 in \cite{Bear}) 
\begin{equation}
\label{eq:log_coord}
\sin \theta = \tanh d_{n, m} = \tanh {\frac{q}{2}\ell_Y(\alpha_n)}.
\end{equation}  
	
	\begin{figure}[h]
		\leavevmode \SetLabels
		\endSetLabels
		\begin{center}
			\AffixLabels{\centerline{\epsfig{file=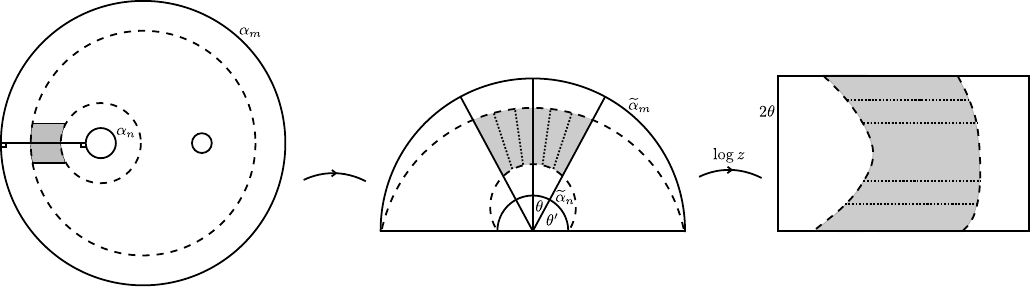, scale=0.7} }}
			\vspace{-20pt}
		\end{center}
		\caption{The foliation of a neighborhood of an ortogeodesic connecting two collars.} 
		\label{fig:orthog}
	\end{figure}

	We will construct a partial measured foliation on $\widetilde R_{n, m}$ whose leaves are equidistant lines to $\widetilde g_{n, m}$ and whose transverse measure on any geodesic arc orthogonal to $\widetilde g_{n, m}$ is the hyperbolic length on that arc. By Lemma \ref{lem:bdry_collar_projection}, the transverse measure is proportional to the hyperbolic arc lengths of both $\widetilde\partial_1\mathcal{C}(\alpha_n )$ and $\widetilde\partial_2\mathcal{C}(\alpha_m )$.
	
	We denote by $\widetilde g'_{n, m}$ the orthogeodesic arc connecting $\widetilde\alpha_n$ and $\widetilde\alpha_m$, and let $\log z = s + it$ be the logarithmic coordinates for $z \in \widetilde R_{n, m}$, where $-\frac{\ell_{\mathbb H}(\widetilde g'_{n, m})}{2} < s < \frac{\ell_{\mathbb H}(\widetilde g'_{n, m})}{2}$ and $\frac{\pi}{2}-\theta \leq t \leq \frac{\pi}{2}+\theta$. The inequalities for $s$ are strict since the image of $\widetilde R_{n, m}$ under $\log z$ does not meet the vertical sides of the rectangle $[-\frac{\ell_{\mathbb H}(\widetilde g'_{n, m})}{2}, \frac{\ell_{\mathbb H}(\widetilde g'_{n, m})}{2}] \times [\frac{\pi}{2}-\theta, \frac{\pi}{2}+\theta] $. 
	
	If $d(z)$ is the hyperbolic distance from $z=|z|e^{i\theta'}$ to the $y$-axis, then the formula (\ref{eq:log_coord}) gives 
	\begin{equation}
	\label{eq:rel}
	\vert \cos \theta' \vert = \tanh d(z). 
	\end{equation}
	Note that $\log z$ is not an isometry for the hyperbolic distances on the geodesic arcs orthogonal to $\widetilde g_{n, m}$ to the Euclidean distances in their images. Let $$f(s+it)=s-i \cdot \tanh^{-1}[\cos t]$$ be defined on $[-\frac{\ell_{\mathbb H}(\widetilde g'_{n, m})}{2}, \frac{\ell_{\mathbb H}(\widetilde g'_{n, m})}{2}] \times [\frac{\pi}{2}-\theta, \frac{\pi}{2}+\theta]$. The function $f(s,t)$ is differentiable, $f_s=1$, and $f_t=\frac{i}{\sin t}$. Therefore, the Beltrami coefficient of $f$ is
    $$
\Big{|} \frac{\frac{1}{\sin t}-1}{\frac{1}{\sin t}+1}\Big{|}
    $$
    and, 
    $f(s,t)$ is a quasiconformal map (with a quasiconformal constant independent of $n$) on its domain since $t$ is bounded away from $0$ and $\pi$ uniformly in $n$. 
	By (\ref{eq:rel}), the composition $f(\log z)$ is an isometry on the geodesic arcs orthogonal to $\widetilde g_{n, m}$. 
Therefore, the image of $\widetilde R_{n, m}$ under $f(\log z)$ denoted by $\widehat R_{n, m}$ is in a rectangle $R$ with the length $\ell_{\mathbb H}(\widetilde g'_{n, m})$ and the height $2d_{n, m}=q{\ell_Y(\alpha_n)}$.
	
	We define a partial foliation of $\widehat R_{n, m}$ by $v(s+it)=t$. The leaves are horizontal lines and the Dirichlet integral $\mathcal D_{\widehat R_{n, m}}(v) \leq C \cdot {\ell_Y(\alpha_n)}$ for a universal constant $C>0$ because $\ell_{\mathbb H}(\widetilde g_{n, m})$ is bounded from the above in terms of the upper bound of the lengths of the cuffs. 
	The pull-back of $\widetilde v_{n, m}$ on $\widetilde R_{n, m}$ defines a partial measured foliation whose leaves are as in Figure \ref{fig:orthog} and whose transverse measure is given by the hyperbolic length on the arc orthogonal to $\widetilde g_{n, m}$. The Dirichlet integral $\mathcal D_{\widetilde R_{n, m}}(\widetilde v_{n, m})$ is bounded above by a constant multiple of $\mathcal D_{\widehat R_{n, m}}(v)$ (the Dirichlet integral of the pre-composition of a function with a $K$-quasiconformal map is at most $K$ times the Dirichlet integral of the original function). By the conformal invariance of the Dirichlet integral, we denote the partial measured foliation on $R_{n, m}$ by $v_{n, m}$. We define $\mathcal F_{n, m}=\frac{w(e_{n, m})}{q\ell_Y(\alpha_n)} v_{n, m}$ such that the total transverse measure $w(e_{n, m})$ is achieved and 
	\begin{align*}
		\mathcal D_{R_{n, m}}(\mathcal F_{n, m})
		&=\bigg[\frac{w(e_{n, m})}{q\ell_Y(\alpha_n)}\bigg]^2 \mathcal D_{\widetilde R_{n, m}}(\widetilde v_{n, m}) \\
		&\leq L \cdot \frac{[w(e_{n, m})]^2}{\ell_Y(\alpha_n)} = L \cdot \frac{i(\mu ,\alpha_n)^2}{\ell_Y(\alpha_n )}
	\end{align*} where $L>0$ depends only on the upper bound of the lengths of the cuffs of $Y$.
\end{proof}

The following lemma constructs a partial measured foliation of a Euclidean trapezoid whose one side is orthogonal to both base sides. The leaves of the foliation connect the bases, and the transverse measure is equal to the Euclidean measure of the shorter base.

\begin{lem}
	\label{lem:trapezoid-foliation}
	Let $\mathcal{Q}$ be a Euclidean trapezoid whose bases are vertical segments of lengths $\ell$ and $\ell_1$, $\ell <\ell_1\leq L_1<\infty$, and one side is orthogonal to both bases with length at least $L>0$. Then there exists a partial measured lamination of $\mathcal{Q}$ whose leaves are Euclidean lines connecting the two base sides and whose transverse measure on one base side is the Euclidean measure and on the other base side is a multiple of the Euclidean measure. Moreover, the Dirichlet integral is, at most, a constant multiple of the length $\ell$ of the shorter side, with the constant depending on $L>0$ and $L_1$.
\end{lem}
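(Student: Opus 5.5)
Place the trapezoid in coordinates: the orthogonal side lies on the $x$-axis from $(0,0)$ to $(h,0)$ with $h\geq L$. The short base is the vertical segment $\{0\}\times[0,\ell]$ and the long base is $\{h\}\times[0,\ell_1]$. (If the short base is on the right, reflect.) The slanted side joins $(0,\ell)$ to $(h,\ell_1)$.

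We foliate by the straight segments joining $(0,s\ell)$ to $(h,s\ell_1)$ for $s\in[0,1]$. These fill the whole of $\mathcal Q$ and are pairwise disjoint; in particular, $s=0$ gives the orthogonal side and $s=1$ the slanted side. A point $(x,y)\in\mathcal Q$ lies on the leaf with parameter
$$
s(x,y)=\frac{y}{\ell+(\ell_1-\ell)x/h}.
$$
Set $v=\ell\, s$. Then $v$ is differentiable with nonvanishing gradient (since $\partial_y v>0$), and its level sets are exactly the segments above. On the short base $v(0,y)=y$, so the transverse measure there is the Euclidean measure. On the long base $v(h,y)=\ell y/\ell_1$, a constant multiple of the Euclidean measure. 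This gives the required partial measured foliation.

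For the Dirichlet integral, write $\lambda(x)=\ell+(\ell_1-\ell)x/h$, which satisfies $\ell\leq\lambda(x)\leq\ell_1$. Then $\partial_y v=\ell/\lambda(x)\leq 1$ and
$$
|\partial_x v|=\frac{\ell\, y\,(\ell_1-\ell)/h}{\lambda(x)^2}\leq \frac{\ell\,(\ell_1-\ell)}{h\,\lambda(x)}\leq \frac{L_1}{L},
$$
using $y\leq\lambda(x)$ and $\ell\leq\lambda(x)$. Both partial derivatives are therefore bounded by constants depending only on $L,L_1$.

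The crude bound "area $\leq h\ell_1$" would not give a multiple of $\ell$, so the derivative bounds must be integrated against the shape of $\mathcal Q$ rather than against its area. Integrating in vertical strips gives
$$
\mathcal D_{\mathcal Q}(v)=\int_0^h\!\int_0^{\lambda(x)}\Big[\Big(\frac{\ell}{\lambda}\Big)^2+\Big(\frac{\ell y(\ell_1-\ell)}{h\lambda^2}\Big)^2\Big]dy\,dx
=\int_0^h\Big[\frac{\ell^2}{\lambda}+\frac{\ell^2(\ell_1-\ell)^2}{3h^2\lambda}\Big]dx .
$$
Substitute $d\lambda=(\ell_1-\ell)dx/h$. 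Then $\int_0^h dx/\lambda=\frac{h}{\ell_1-\ell}\log(\ell_1/\ell)$, which gives
$$
\mathcal D_{\mathcal Q}(v)=\ell^2\frac{h}{\ell_1-\ell}\log\frac{\ell_1}{\ell}\Big(1+\frac{(\ell_1-\ell)^2}{3h^2}\Big).
$$

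It remains to bound this by a multiple of $\ell$. Since $\log(\ell_1/\ell)\leq(\ell_1-\ell)/\ell$, the expression is at most $\ell h\big(1+L_1^2/(3L^2)\big)$. This is a multiple of $\ell$ with a constant involving $h$ and $L,L_1$. The step I expect to need care is this dependence on $h$. The statement assumes only a lower bound $h\geq L$ for the orthogonal side. In the intended application, however, $h$ is a piece of the collar or connector length, which is bounded above in terms of the cuff bound. So I would either read $L$ as also controlling $h$ from above, or first shorten the region to width $\min(h,L)$ and foliate the remainder as a rectangle. That rectangle has height $\ell$ only if the slanted side sits beyond the straight part, so I would choose the cut accordingly. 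Under the standing assumption that $h$ is bounded in terms of $L,L_1$, the constant depends only on $L$ and $L_1$, as claimed.
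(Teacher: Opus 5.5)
Your proof is the paper's proof. It uses the same straight-line interpolation $v=\ell y/\lambda(x)$, the same closed-form value of $\mathcal D_{\mathcal Q}(v)$, and the same inequality $\log(\ell_1/\ell)\le(\ell_1-\ell)/\ell$. The paper places the long base at $x=L$, i.e.\ it tacitly takes the orthogonal side to have length exactly $L$, so its constant $L+L_1^2/(3L)$ is your $h+L_1^2/(3h)$ at $h=L$.

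Your worry about the dependence on $h$ is justified and cannot be removed by any foliation. Every leaf has length at least $h$, so Cauchy--Schwarz along leaves gives $\mathcal D_{\mathcal Q}\geq \ell^2h/\ell_1$. This is also why your rectangle-cutting variant fails: the rectangle alone costs $\ell(h-L)$. The correct reading is the bounded-$h$ one, which is what the application uses: there the orthogonal side is the gap $\epsilon\theta\le\epsilon\pi/2$ created by $f_\epsilon$.
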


\begin{proof}
	We position $\mathcal{Q}$ to have one base on the positive $y$-axis starting at the origin and the other base starting at point $L>0$ on the $x$-axis and orthogonal to the $x$-axis while staying in the upper half-plane. Let $0\leq t\leq 1$. We connect the point $t\ell$ on the left base side of $\mathcal{Q}$ to the point at height $t\ell_1$ on the right base side by Euclidean line. The equation of the line is
	$$
	y-t\ell=\frac{t\ell_1-t\ell }{L}x.
	$$
	From this formula, we get $t=y/(\frac{\ell_1-\ell}{L}x+\ell )$. Define
	$$
	v(x,y)=\ell t=\frac{\ell y}{\frac{\ell_1-\ell}{L}x+\ell}.
	$$
	A straightforward differentiation and integration gives
	$$
	\mathcal{D}_\mathcal{Q}(v)=\frac{\ell^2L}{\ell_1-\ell}\log\frac{\ell_1}{\ell}+\frac{\ell^2(\ell_1-\ell )}{3L}\log\frac{\ell_1}{\ell}.
	$$
	From the inequality, 
	$$
	\log \frac{\ell_1}{\ell}=\log \Big{[}1+\Big{(}\frac{\ell_1}{\ell}-1\Big{)}\Big{]}\leq \frac{\ell_1-\ell}{\ell}
	$$
	we get
	$$
	\mathcal{D}_\mathcal{Q}(v)\leq L\ell +\frac{(\ell_1 -\ell)^2}{3L}\ell =(L +\frac{L_1^2}{3L})\ell .
	$$
\end{proof}

\noindent {\it Proof of Theorem \ref{thm:int-upper-bdd}.}
We first define foliations $\mathcal{F}_n$ in every standard collar $\mathcal{C}(\alpha_n)$ as in Lemma \ref{lem:collar-foliation}. The transverse measures of $\mathcal{F}_n$ on all $I_n^i$ and $J_n^j$ are proportional to the arc lengths on these arcs, and their total masses are given by the weights on the edges of the train track corresponding to these arcs.

\begin{figure}[h]
\leavevmode \SetLabels
\endSetLabels
\begin{center}
\AffixLabels{\centerline{\epsfig{file=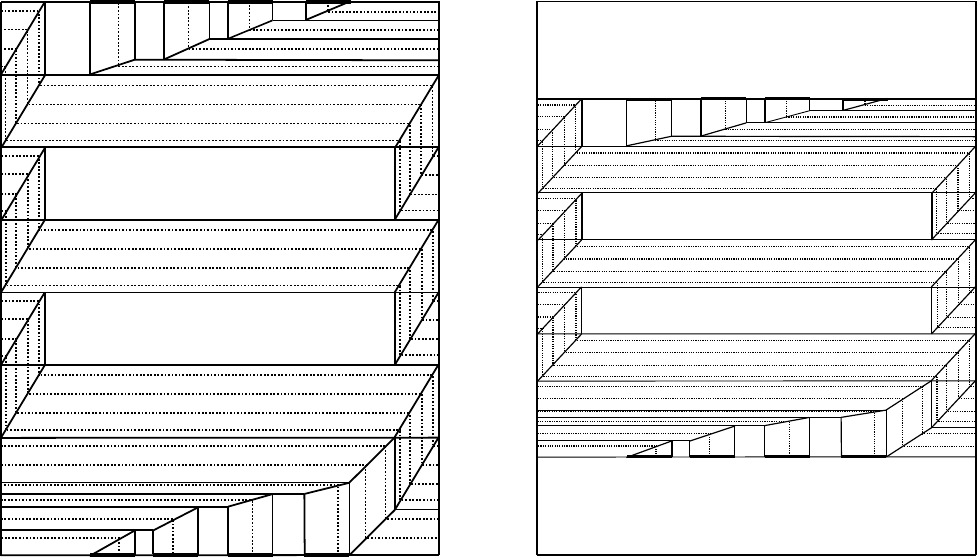, scale=0.75} }}
\vspace{-20pt}
\end{center}
\caption{The shrinking of the collar.} 
\label{fig:shrinking}
\end{figure}

For each subarc $g_{n,m}$ of the orthogeodesic that connects $\mathcal{C}(\alpha_n)$ and $\mathcal{C}(\alpha_m)$, we define the measured foliation $\mathcal{F}_{n,m}$ as in Lemma \ref{lem:connector}. The leaves of $\mathcal{F}_{n,m}$ are hypercycles of $g_{n,m}$, i.e., sets of points equidistant to $g_{n,m}$. The support of the foliation $\mathcal{F}_{n,m}$ is the set of points on distance at most $$d_{n,m}=\frac{q}{2}\min\{\ell_Y(\alpha_n),\ell_Y(\alpha_m)\}$$ from $g_{n,m}$. The transverse measure is proportional to the transverse distance between the equidistant lines to $g_{n,m}$ (see Lemma \ref{lem:connector}). Therefore, the transverse measure of $\mathcal{F}_{n,m}$ on the subarcs of $\partial_1\mathcal{C}(\alpha_n)$ and $\partial_2\mathcal{C}(\alpha_m)$ are proportional to the arc lengths on these arcs (because it is proportional to the distances on both $\alpha_n$ and $\alpha_m$ and then use Lemma \ref{lem:bdry_collar_projection}). 

The piecewise-defined measured foliations do not glue properly on the boundaries $\partial_i\mathcal{C}(\alpha_n)$, even though the intervals where the leaves meet are centered at the intersections of $\partial_i\mathcal{C}(\alpha_n)$ with the orthogeodesics. This is so because the choice of the widths of the neighborhoods of $g_{n,m}$ is given by $q$ times the minimum of the lengths of the two cuffs $\alpha_n$ and $\alpha_m$. Since the lengths of the cuffs may go to zero, we cannot ensure that the minimum is equal to both lengths even under a quasiconformal map of the whole surface. Therefore, we need to adjust the sizes of contact arcs between the collars and neighborhoods of the orthogeodesics.

We will adjust the sizes of $I_n^i$ and $J_n^j$ to match those of the arcs corresponding to the foliation of the neighborhoods of the orthogeodesics. 
We map $\widetilde{\mathcal{C}}(\alpha_n)$ by $z\mapsto\log z$ onto  the rectangle $Q_n=[0,\ell (\alpha_n)]\times [\frac{\pi }{2}-\theta ,\frac{\pi}{2}+\theta ]$, where $0<\theta <\frac{\pi}{2}$. The arcs $I_n^i$ and $J_n^j$ are mapped onto the top and bottom sides of the rectangle, and the proportion of their Euclidean sizes is the same as the proportions in the hyperbolic lengths of the subarcs to the whole boundary arc. In fact, the transverse measure becomes simply the Euclidean measure.

Let 
$$f_{\epsilon}(x,y)=(x,(1-\epsilon )(y-\frac{\pi}{2})+\frac{\pi}{2})
$$
be the map which shrinks $Q_n$ by the factor $(1-\epsilon )$ with $\epsilon >0$ (see Figure \ref{fig:shrinking}). 
If $\ell_Y(\alpha_n)\leq \ell_Y(\alpha_m)$ then $d_{n,m}=\frac{q}{2}\ell_Y(\alpha_n)$ and the arcs from $R_{n,m}$ and $\mathcal{C}(\alpha_n)$ are equal. In this case, we can complete the connection between $R_{n,m}$ and $f_{\epsilon}(\mathcal{C}(\alpha_n))$ by two rectangles with the foliation being the vertical lines and the Dirichlet integral at most $\frac{2\epsilon\theta}{q}\frac{[w(e_{n,m})]^2}{\ell_Y(\alpha_n)}$. 

The other possibility is that the neighborhood width $d_{n,m}$ around $g_{n,m}$ is strictly smaller than the length of the corresponding side $I_n^i$ which happens when $\ell_Y(\alpha_m)<\ell_Y(\alpha_n)$. We will consider the situation directly in the logarithmic coordinates. 
The above quasiconformal map $f_{\epsilon}$ leaves the vertical space of size $\epsilon\theta $ between the intervals on the top of the rectangle $Q_n$ whose lengths are ${q}\ell (\alpha_m)$ which are strictly smaller than the lengths $q\ell (\alpha_n)$ of the intervals coming from $f_{\epsilon}(\widehat I_n^i)$ (see Figure \ref{fig:small-trapezoid}). Each two intervals subtend two trapezoids $T_{n,m}^1$ and $T_{n,m}^2$, and we apply Lemma \ref{lem:trapezoid-foliation} to construct a partial foliation between the two intervals inside the trapezoids which we denote by $\mathcal{T}_{n,m}^1$ and $\mathcal{T}_{n,m}^2$. The partial foliation $v$ on each trapezoid has the transverse measure equal to the Euclidean measure on the shorter interval times a constant.

\begin{figure}[h]
\leavevmode \SetLabels
\endSetLabels
\begin{center}
\AffixLabels{\centerline{\epsfig{file=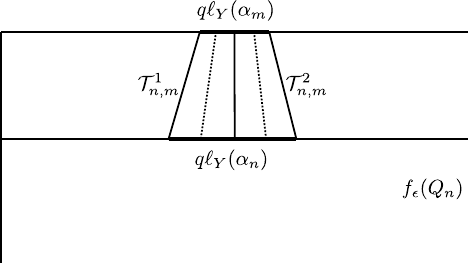, scale=0.85} }}
\vspace{-20pt}
\end{center}
\caption{Connecting different sizes.} 
\label{fig:small-trapezoid}
\end{figure}

Then, for each such $v$ that defines the foliation in the trapezoids, adjust the transverse measure by $\frac{2w(e_{n,m})}{q\ell (\alpha_m)}v(x,y)$. Lemma \ref{lem:trapezoid-foliation} gives
$$
\mathcal{D}(\frac{2w(e_{n,m})}{q\ell (\alpha_m)}v)\leq const\frac{w(e_{n,m})^2}{\ell (\alpha_m)}\leq const \frac{[i(\alpha_m,\mu ) ]^2}{\ell_Y(\alpha_m)}.
$$

We define $\mathcal{F}_{n,m}$ as in Lemma \ref{lem:connector} for each $g_{n,m}$ in each pair of pants. Then we construct $f_{\epsilon}(\mathcal{F}_n)$ for each collar $\mathcal{C}(\alpha_n)$ and note that $\mathcal{D}_{\mathcal{C}(\alpha_n)}(f_{\epsilon}(\mathcal{F}_n))\leq \frac{1}{1-\epsilon} \mathcal{D}_{\mathcal{C}(\alpha_n)}(\mathcal{F}_n)$ because the quasiconformal constant of $f_{\epsilon}$ is $\frac{1}{1-\epsilon}$. Finally, we draw the trapezoids connecting $\mathcal{F}_{n,m}$ and $f_{\epsilon}(\mathcal{F}_n)$. The transverse measures on the connecting places agree, and the constructed foliation $\mathcal{F}$ is homotopic to $\mu$. 

Finally, each $g_{n,m}$ gives a single part of the foliation $\mathcal{F}$, each foot of each orthogonal induces two trapezoids or two rectangles, and each collar induces a single part of the foliation $\mathcal{F}$. Therefore
\begin{align*}
\mathcal{D}_{Y}(\mathcal{F})
&\leq \sum_n\mathcal{D}(f_{\epsilon}(\mathcal{F}_n)+\sum_{n,m}\mathcal{D}(\mathcal{F}_{n,m})+16\sum_{n,m}\mathcal{D}(\mathcal{T}_{n,m}^1)\\
&\leq const \sum_{n=1}^{\infty}\Big{\{} \frac{[i(\mu ,\alpha_n)]^2}{\ell_X(\alpha_n)}+ {\ell_X(\alpha_n)}{[i(\mu ,\beta_n)]^2}\Big{\}}<\infty.
\end{align*}
$\hfill\qed$

\section{Random walks and the type problem}
\label{sec:rw-type}

In this section, we prove Theorem \ref{thm:graph}. The Hopf-Tsuji-Sullivan theorem states that the geodesic flow on the unit tangent bundle $T^1X$ of a Riemann surface $X$ is ergodic if and only if the Brownian motion on $X$ is recurrent if and only if the Poincar\' e series of the Fuchsian group $\Gamma$ of $X=\mathbb{H}/\Gamma$ is divergent if and only if $X$ does not have a Green's function. It is well-known that the above conditions are satisfied when $X$ is a compact (genus at least two) or a (hyperbolic) finite-area Riemann surface, and the above properties are not satisfied when the Fuchsian group $\Gamma$ is of the second kind. When the Fuchsian group $\Gamma$ is of the first kind but not finitely generated, then the Riemann surface $X=\mathbb{H}/\Gamma$ can either have ergodic geodesic flow or it may not. The type problem is deciding when an explicitly given Riemann surface has ergodic geodesic flow. The classical notation for this class is $X\in O_G$, which should be read as the Riemann surface $X$ has no Green's function.

We are interested in determining when a Riemann surface given by the Fenchel-Nielsen parameters on a pants decomposition lies in $O_G$. The standing assumption is that the cuff lengths are bounded above. In this case, the deformations of the surface $X$ by twists along the cuffs, which are at most the full length of the cuffs, are quasiconformally equivalent to the original Riemann surface structure. Since the class $O_G$ is invariant under the quasiconformal deformations, our conditions will only contain the lengths of the cuffs.

We now prove Theorem \ref{thm:graph} from Theorem \ref{thm:realization}.




\begin{proof}[Proof of Theorem \ref{thm:graph}]
Let $\{\alpha_n\}_{n\in\mathbb{N}}$ be the cuffs of the pants decomposition. By assumption, for some $C>0$, for all $n\in\mathbb{N}$, we have
$
\ell_X(\alpha_n)\leq C.
$

We first assume that the random walk on $\mathcal{G}=(V,E)$ is transient. We will construct a finite-area holomorphic quadratic differential on $X$ whose trajectories are transient. Once this is accomplished, Theorem 1.1 from \cite{Saric23} implies that $X\notin O_G$. 

Since the graph $\mathcal{G}=(V,E)$ may not be simple, we replace it by a simple graph $\hat{\mathcal{G}}=(V,\hat{E})$ with the same set of vertices $V$ and identifying edges connecting the same vertices. Then the conductance on an edge $\hat{e}\in \hat{E}$ obtained by identifying edges $e_1,e_2\in E$ is the sum of their conductances. By the parallel law, the random walk on $\mathcal{G}=(V,E)$ is transient if and only if the random walk on $\hat{\mathcal{G}}=(V,\hat{E})$ is transient (see \cite[page 28, II]{LyonsPeres}).

We choose an arbitrary orientation of each edge $\hat{e}\in \hat{E}$. 
Denote by $\hat{e}^-$ and $\hat{e}^+$ the initial and the end point of $\hat{e}$. In the language of Reversible Markov Chains as in \cite[I.2.A]{Woess}, we define the {\it conductance} of an edge $\hat{e}$ obtained by identifying $e_1,e_2\in E$ to be $$\hat{a}(\hat{e})=\ell_X(\alpha_{e_1})+\ell_X(\alpha_{e_2}),$$ where $\alpha_{e_i}$ is the cuff corresponding to the edge $e_i\in E$. If $\hat{e}\in \hat{E}$ corresponds to one edge $e\in E$ then the conductance is
$$
\hat{a}(\hat{e})=\ell_X(\alpha_e).$$

The {\it total conductance} at $x\in V$ is $$\hat{a}(x)=\sum_{\hat{e}\in x}a(\hat{e}),$$ where the sum is over all edges $\hat{e}$ that have at least one endpoint $x$.
Define the {\it resistance} $$\hat{r}(\hat{e})=\frac{1}{\hat{a}(\hat{e})}$$ along edge $\hat{e}\in\hat{E}$. 
Let $\ell^2(V,\hat{a}(\cdot ))$ and $\ell^2(\hat{E},\hat{r}(\cdot ,\cdot ))$ be the spaces of $\ell^2$-functions on the sets of vertices and edges of $\hat{\mathcal{G}}$, where the first space is weighted by the total conductance $\hat{a}(x)$ and the second space is weighted by the resistance $\hat{r}(\hat{e})=\frac{1}{\hat{a}(\hat{e})}$. 

Define
$$
\nabla :\ell^2(V,\hat{a})\to\ell^2(\hat{E},\hat{r})
$$
by
$$
\nabla (f)(\hat{e})=\frac{f(\hat{e}^+)-f(\hat{e}^-)}{\hat{r}(\hat{e})},
$$
where $\hat{e}^+$ is the end point of the edge $\hat{e}$ and $\hat{e}^-$ is the initial point of the edge $\hat{e}$ for the given orientation of $\hat{e}$.  
The adjoint map
$$\nabla^*:\ell^2(\hat{E},\hat{r})\to\ell^2(V,\hat{a})$$ is given by
$$
\nabla^*(g)(x)=\frac{1}{\hat{a}(x)}\Big{[}\sum_{\hat{e}:\hat{e}^+=x}g(\hat{e})-\sum_{\hat{e}:\hat{e}^-=x}g(\hat{e})\Big{]}. 
$$

By \cite[page 19, Theorem (2.12)]{Woess}, the random walk on $\hat{\mathcal{G}}$ is transient if and only if there exists a function $\hat{u}\in \ell^2(\hat{E},\hat{r})$ not identically equal to zero such that
\begin{equation}
\label{eq:transient_walk}
\nabla^*\hat{u}(y)=-\frac{i_0}{\hat{a}(x)}\delta_x(y),
\end{equation}
where $\delta_x(\cdot )$ is the Dirac measure with support $\{ x\}$ and $i_0$ is a non-zero constant. 

The existence of the flow function $\hat{u}$ depended on the fact that $\hat{\mathcal{G}}$ is a simple graph. We define a flow function $u\in\ell^2(E,r)$ for the non-simple graph $\mathcal{G}$ with $r(e)=\frac{1}{\ell_X(\alpha_e)}=\frac{1}{a(e)}$ for $e\in E$ using the flow function $\hat{u}$. For $e=\hat{e}$, set the orientation of $e$ to agree with the orientation of $\hat{e}$ and  $u(e)=\hat{u}(\hat{e})$. For $e_1,e_2\in E$ identified to an edge $\hat{e}\in\hat{E}$, set the orientations of $e_1,e_2$ to agree with the orientation of $\hat{e}$. 
Without loss of generality, assume that $\ell_X(\alpha_{e_1})\geq \ell_X(\alpha_{e_2})$. We set $u(e_1)=\hat{u}(\hat{e})$ and $u(e_2)=0$. Then $u$ satisfies (\ref{eq:transient_walk}) by the choice of the edge orientations. It remains to prove that $u\in\ell^2(E,r)$. Indeed
$$
\ell_X(\alpha_{e_1})\leq \ell_X(\alpha_{e_1})+\ell_X(\alpha_{e_2})\leq 2 \ell_X(\alpha_{e_1})$$
implies that
$$
r(e_1)\leq 2 \hat{r}(\hat{e}).
$$
Since $u(e_2)=0$ by the definition, we are not concerned how large $r(e_2)$ is. 
The above inequality implies $\| u\|_{\ell^2(E,r)}\leq 2\| \hat{u}\|_{\ell^2(\hat{E},\hat{r})}<\infty$. 

We use the obtained flow function $u$ on the non-simple graph $\mathcal{G}$ to show that $X\notin O_G$. 
We fix $x$ and a function $u$ as above. Then at each vertex $y$ different from $x$, $\sum_{e^+=y}u(e)-\sum_{e^-=y}u(e)=0$, For each oriented edge $e$, if $u(e)\geq 0$ we keep the orientation of $e$, and if $u(e)<0$ then we reverse the orientation of $e$. We replace $u$ with $|u|$ and note that $\sum_{e^+=y}|u|(e)-\sum_{e^-=y}|u|(e)=0$. By the abuse of notation, we keep the notation $u$ for the absolute value of $u$. 

We obtained a non-negative function $u\in\ell^2(E,r)$ that satisfies
$\nabla^*u(y)=-\frac{i_0}{a(x)}\delta_x(y)$ for a fixed vertex $x$. Let $P_x$ be the pair of pants that corresponds to $x$. We choose $i_0=a(x)$ and obtain a non-negative, non-trivial function $u\in\ell^2(E,r)$ such that
$$
\nabla^*u(y)=-\delta_x(y).
$$
The above condition implies that at each $y\neq x$, the total $u$-weight of the edges $e$ with $e^+=y$ is equal to the total $u$-weight of the edges $e$ with $e^-=y$.

We use the function $u$ and the oriented edge set $E$ to define a train track $\tau_u$ on $X$ with weights satisfying the switch conditions on the vertices of the train track. The train track $\tau_u$ will have its vertices on the cuffs of the pants decomposition, dividing each cuff into finitely many edges, and additional edges will connect the cuffs in pairs of pants. We first specify the direction of the tangency of the edges connecting the cuffs. Let $\alpha$ be a cuff on the boundary of $P_y$ and $P_z$. Given a segment crossing from $P_y$ to $P_z$ and intersecting $\alpha$ at one point, we choose tangencies on both sides of $\alpha$ to be from the right as seen from the segment (see Figure \ref{fig:tangencies}). We form a train track $\tau_u$ by connecting the cuffs by orthogeodesics and smoothing at the feet to the right (as in Figure \ref{fig:tangencies}).

\begin{figure}[h]
\leavevmode \SetLabels
\endSetLabels
\begin{center}
\AffixLabels{\centerline{\epsfig{file=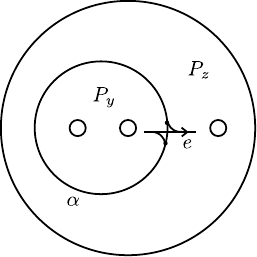, scale=0.9}}}
\vspace{-20pt}
\end{center}
\caption{The tangencies of $\tau_u$ at a cuff obtained from the orientation of edges.} 
\label{fig:tangencies}
\end{figure}

\begin{figure}[h]
\leavevmode \SetLabels
\endSetLabels
\begin{center}
\AffixLabels{\centerline{\epsfig{file=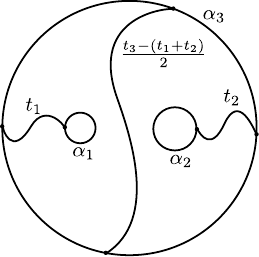, scale=0.9} }}
\vspace{-20pt}
\end{center}
\caption{The train track $\tau_u$ on $P_x$ when $t_1+t_2\leq t_3$ with two different relative positions of tangencies.} 
\label{fig:notT}
\end{figure}

Let $e_1$, $e_2$, and $e_3$ be the edges of the graph $\mathcal{G}$ whose one vertex is $x$, where we have 
$\nabla^*u(x)=-1.$ Denote by $\alpha_i$ for $i=1,2,3$ the cuffs of $P_x$ that correspond to $e_i$.  Recall that the edges are oriented and that the function $u$ is non-negative. For simplicity, write $t_i=u(e_i)$ for $i=1,2,3$. If we have
$$
t_1+t_2\leq t_3,
$$
then we form a train track in $P_x$ with the weights on the edges connecting the cuffs as in Figure \ref{fig:notT}. Similarly, if the weights $t_i$ for $i=1,2,3$ fail to satisfy another triangle inequality, we can form the corresponding train track.

The remaining possibility is that the weights satisfy all three possible triangle inequalities. Then we obtain the train track as in Figure \ref{fig:T}. The weights are given by
\begin{equation*}
x=\frac{1}{2}(t_1+t_3-t_2),\
y=\frac{1}{2}(t_1+t_2-t_3),\ \mathrm{and}\ 
z=\frac{1}{2}(t_2+t_3-t_1).
\end{equation*}

\begin{figure}[h]
\leavevmode \SetLabels
\endSetLabels
\begin{center}
\AffixLabels{\centerline{\epsfig{file=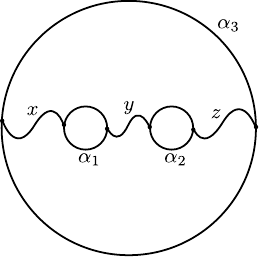, scale=0.9} }}
\vspace{-20pt}
\end{center}
\caption{The train track $\tau_u$ on $P_x$ when all three triangle inequalities hold.} 
\label{fig:T}
\end{figure}

If $x$ meets only two edges $e_1$ and $e_2$, then we have $t_1\leq t_2$ or the opposite inequality. This means that $P_x$ has two cuffs and a puncture. The train track $\tau_u$ is given in Figure \ref{fig:punctureT}. Note that in all cases the total sum of the weights at each cuff is equal to the value of the function $u$ on the edge corresponding to the cuff.
\begin{figure}[h]
\leavevmode \SetLabels
\endSetLabels
\begin{center}
\AffixLabels{\centerline{\epsfig{file=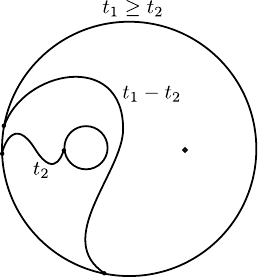, scale=0.9} }}
\vspace{-20pt}
\end{center}
\caption{The train track $\tau_u$ on $P_x$ with one puncture.} 
\label{fig:punctureT}
\end{figure}

Next, we consider the train track $\tau_u$ on the pair of pants $P_y$ corresponding to a vertex $y$ different from $x$ of the edges with one vertex at $x$. Let $\alpha_i$ for $i=1,2,3$ be the cuffs of $P_y$, and let $t_i=u(e_i)$ for the edges $e_i$ corresponding to $\alpha_i$. Then we have $t_1+t_2=t_3$ or an analogous equality because $\nabla^*u(y)=0$. The train track with appropriate weights is given in Figure \ref{fig:attachPy}, and we note that the sum of the weights of the edges at a cuff is again equal to the value of the function $u$ on the edge corresponding to the cuff. The case when one end is a puncture is left to the reader.

\begin{figure}[h]
\leavevmode \SetLabels
\endSetLabels
\begin{center}
\AffixLabels{\centerline{\epsfig{file=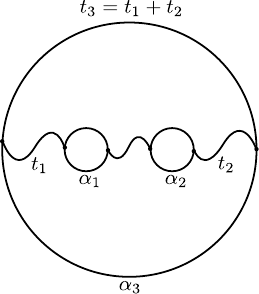, scale=0.9} }}
\vspace{-20pt}
\end{center}
\caption{The train track $\tau_u$ on $P_y$.} 
\label{fig:attachPy}
\end{figure}

We formed a train track $\tau_u$ in this process, together with the appropriate weights $w$ on the edges connecting the cuffs. The sum of the weights from one side of a cuff equals the sum of the weights on the other side of the cuff because both sums are the value of the function $u$ on the edge of $E$ representing the cuff. We define the weights on the edges of the train track that lie on the cuffs such that the switch condition at the vertices of the train track holds (the sum of the weights tangent in one direction equals the sum of the weights tangent in the other direction). We establish this extension using the smallest possible edge weights on the train track that lie on the cuffs. This is standard and is illustrated in Figure \ref{fig:weights-cuff}, and the statement follows since there are only finitely many positions.

\begin{figure}[h]
\leavevmode \SetLabels
\endSetLabels
\begin{center}
\AffixLabels{\centerline{\epsfig{file =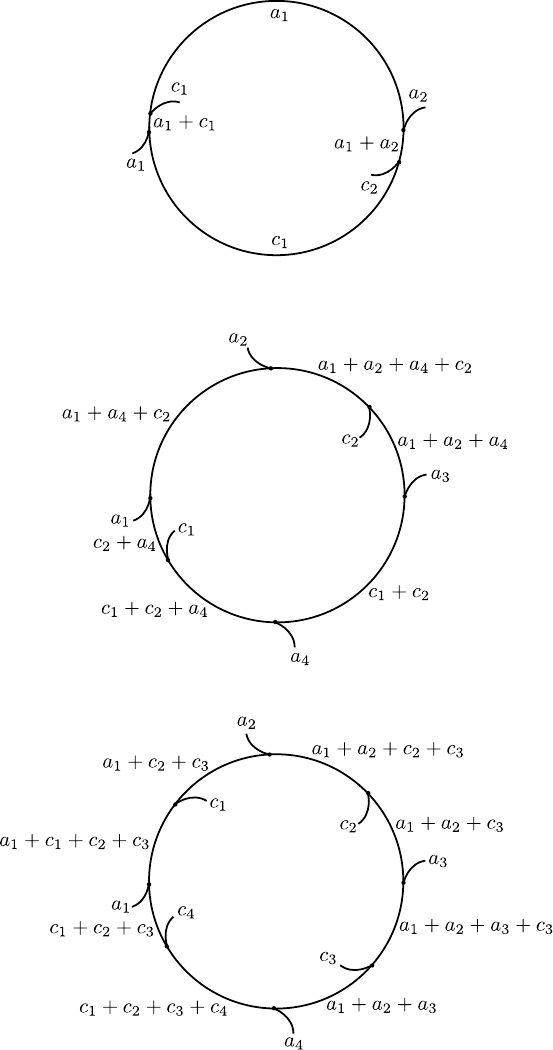,scale=.9} }}
\vspace{-20pt}
\end{center}
\caption{The weights on cuffs.} 
\label{fig:weights-cuff}
\end{figure}

Let $E(\tau_u )$ be the set of edges of the constructed train track $\tau_u$. 
Therefore, we obtained an assignment of weights $w:E(\tau_u )\to\mathbb{R}$ that is non-negative and satisfies switch conditions. This defines a measured lamination on $X$, denoted by $\mu_u$ (see \cite{Saric20} and \cite[Expos\' e One]{FLP}). 

For any edge $g$ of the constructed train track $\tau_u$, denote by $w(g)$ the assigned weight. 
If $g$ connects two cuffs of a pair of pants $P$, then $$w(g)\leq \sum_j u(e_j),$$ where $e_j$ are edges of $\mathcal{G}$ ending at the vertex corresponding to $P$. If $g$ is an edge of the train track $\tau_u$ on a cuff $\alpha$ of the pants decomposition, then $$w(g)\leq 2u(e_{\alpha}),$$ where $e_{\alpha}$ is the edge of $\mathcal{G}$ corresponding to the cuff $\alpha$ (see Figure \ref{fig:weights-cuff}). 

We gather the above estimates to prove that $\mu_u$ is obtained by straightening the leaves of a finite-area holomorphic quadratic differential on $X$. Let $\{\alpha_n\}$ be the cuffs of the pants decomposition and $\{\beta_n\}$ the transverse family of simple closed geodesics as in Theorem \ref{thm:int-upper-bdd}.

Note that $i(\mu_u ,\alpha_n)\leq u(e_{\alpha_n})$ by the definition of $\mu_u$. Since $i(\mu_u ,\beta_n)\leq\max_{g} w(g)$ where the supremum is over all edges $g$ of the train track $\tau$ that lie on $\alpha_n$, the above estimate gives
$$
i(\mu_u ,\beta_n)\leq 2 u(e_{\alpha_n}).
$$ 
Then we have
\begin{align}
 &\quad \sum_{n=1}^{\infty}\Big{\{} \frac{[i(\mu_u ,\alpha_n)]^2}{\ell_X(\alpha_n)}+ {\ell_X(\alpha_n)}{[i(\mu_u ,\beta_n)]^2}\Big{\}}\\ \nonumber
 &\leq C \sum_{n=1}^{\infty}\Big{\{} \frac{u(e_{\alpha_n})^2}{\ell_X(\alpha_n)}+ {\ell_X(\alpha_n)}{u(e_{\alpha_n})^2}\Big{\}}\\ \nonumber
 &\leq C_1 \sum_{n=1}^{\infty}{u(e_{\alpha_n})^2}{r(e_{\alpha_n})}<\infty. \nonumber
 \end{align}

It remains to prove that there is a set of transient leaves of $\mu_u$ of positive transverse measure. By \cite[Theorem 1.6]{Saric23}, there exists a finite-area holomorphic quadratic differential $\varphi_u$ on $X$ whose horizontal measured foliation is homotopic to $\mu_u$. We form an increasing exhaustion of $X\setminus P_x$ by finite area surfaces $\{ X_n\}_n$ that are unions of finitely many pairs of pants at each finite stage $X_n$. In addition, the boundary of each $X_n$ contains all boundary cuffs of $P_x$. We replace each boundary cuff with a step curve in a natural parameter of $\varphi$ (see \cite{Strebel}) within $1/100$ of the cuff's collar width. We consider all rays of the horizontal foliation in $X_n$ that start on the step curves corresponding to the boundary cuffs of $P_x$. If a ray hits the same boundary component at its other endpoint and can be homotoped to this boundary component, then we erase it. The remaining rays are grouped into two sets: the rays that are homotoped to the edges of the graph $\mathcal{G}$ that have their initial endpoints at the vertex $x$ (corresponding to pants $P_x$), and the rays that are homotoped to the edges of $\mathcal{G}$ that have their terminal endpoints at $x$. A ray can start and end at a cuff of $P_x$. However, the transverse measure of rays that start at a cuff of $P_x$ is larger by $1$ than the transverse measure of the rays that end at a cuff of $P_x$ by $\nabla^*u(x)=-1$. 

Consider the set of rays in each $X_n$ corresponding to the edges of $\mathcal{G}$ with the initial point at $x$. We define a flow on this set by orienting each ray so that it starts at the cuff. Then we have a well-defined flow on this set of rays that preserves $|\varphi_u|$-area. The Poincar\'e recurrence theorem implies that almost all rays hit the boundary of $X_n$ in the positive direction.  Since the transverse measure of the ray starting at the cuffs of $P_x$ is at least $1$ plus the transverse measure of the ray returning to the cuffs of $P_x$, it follows that the set of rays of transverse measure at least $1$ leaves the cuffs of $P_x$ and hits other parts of the boundary of $X_n$. By letting $n\to\infty$, we conclude that there is a set of rays of transverse measure at least $1$ starting at the cuffs of $P_x$ and going off to infinity. Therefore, the measured lamination $\mu_u$ has leaves of positive transverse measure that go to infinity of $X$, and we conclude that $X\notin O_G$.

\vskip .2 cm

To prove the opposite direction, assume that $X\notin O_G$. In \cite[Theorem 4.2]{Saric23}, a finite area holomorphic quadratic differential is constructed so that almost all the leaves connect a single geodesic pair of pants with infinity. We only consider the first part of this construction. Namely, let $P_x$ be the fixed pair of pants of the pants decomposition that corresponds to a fixed vertex $x\in V$ of the graph $\mathcal{G}$. Then there exists a harmonic function $g:X\setminus P_x$ such that $g|_{\partial P_x}=0$ and $0< g(z)<1$ for all $z\in X\setminus \bar{P}_x$ (see \cite{AhlforsSario}). Let $g^*(z)$ denote the local harmonic conjugate of $g$. Then $d(g+ig^*)$ is an Abelian differential on $X\setminus \bar{P}_x$, $[d(g+ig^*)]^2$ is integrable on $X\setminus P_x$,  and its horizontal foliation ${H}_g$ consists of curves $g^*(z)=const$. The function $g(z)$ monotonically increases along each horizontal trajectory starting at $\partial P_x$. This gives a global orientation on the horizontal leaves of ${H}_g$ that start at $\partial P_x$ and leave every finite area subset of $X\setminus P_x$ (we remove recurrent leaves). The transverse measure to the horizontal foliation ${H}_g$ is given by integrating $dg^*$ along transverse arcs. Let $\mu_g$ be a measured lamination obtained by extending ${H}_g$ to a proper partial measured foliation on $X$, straightening the leaves, and pushing forward the measure. By \cite[Theorem 4.2]{Saric23}, $\mu_g\in ML_{int}(X)$.  The leaves of the support of $\mu_g$ in $X\setminus P_x$ inherit the orientation from the leaves of the horizontal foliation of ${H}_g$.

We first choose an arbitrary orientation of all edges of $\mathcal{G}$. An edge $e$ connects $e^-=y_1$ to $e^+=y_2$, where $y_1$ and $y_2$ are pairs of pants. Let $\alpha$ be the cuff on the common boundary of $P_{y_1}$ and $P_{y_2}$. We define $u(e)$ to be the transverse measure of the geodesics of the support of $\mu_g$ that intersect $\alpha$ and have the same orientation as $e$ minus the transverse measure of the geodesics of the support of $\mu_g$ that intersect $\alpha$ and have opposite orientation to $e$. For every vertex $y\neq x$, the total transverse measure of the geodesics of the support of $\mu_g$ that enter the corresponding pair of pants is equal to the transverse measure of the geodesics exiting it. Thus, we have that (\ref{eq:transient_walk}) holds for $y\neq x$, and it remains to consider the edges with one endpoint at $x$. Orient all edges to start at $x$ and define $u$ on each such edge to be the transverse measure of the geodesics of the support of $\mu_g$ that intersect corresponding arcs. Then (\ref{eq:transient_walk}) holds for $y= x$ with an appropriate constant. 
Since $\mu_g$ is in $ML_{int}(X)$ it satisfies 
$$
\sum_n \Big{[}\frac{i(\alpha_n,\mu_g )^2}{\ell_X(\alpha_n)}+i(\beta_n,\mu_g )^2\ell_X(\alpha_n)\Big{]} <\infty.
$$
Since $|u(e_n)|\leq i(\alpha_n ,\mu_g )$ when $\alpha_n$ is the cuff corresponding to $e_n$, we have
$u\in\ell^2(E, r)$ by $\sum_n \frac{i(\alpha_n,\mu_g )^2}{\ell_X(\alpha_n)} =\sum_n {[u(e_n) ]^2}{r(e_n)}<\infty$. Moreover, $u$ is not identically zero because $u(e)\neq 0$ for at least one edge $e$ corresponding to a cuff of $P_x$ by the definition of $g$ (the minimum of $g$ is achieved on $\partial P_x$). Therefore, $\mathcal{G}$ is transient. This finishes the proof.
\end{proof}

\section{Rough isometries for Riemann surfaces with upper-bounded pants decompositions}

\label{sec:rough}

Kanai \cite{Kanai} and Varopoulos \cite{Var} proved that if two Riemannian manifolds with upper-bounded Ricci curvatures and positive injectivity radius are roughly isometric then they both either do not support a Green's function or they both do. Their methods seem to depend on the injectivity radius being positive. However, there are many infinite Riemann surfaces with cuff lengths going to zero over a subsequence, which is not addressed by the above two papers. 

In this section, we prove Theorem \ref{thm:rough-isom} and Theorem \ref{thm:rough-isom} stated in introduction. We also give a short proof of Kanai-Varopoulos theorem. We first use Theorem \ref{thm:graph} to give an example of two roughly isometric Riemann surfaces with upper-bounded cuff lengths such that the above theorem of Kanai and Varopoulos is false.

\begin{thm}
    \label{thm:kanai_ce}
    There exist two roughly isometric Riemann surfaces with upper-bounded pants decompositions such that one has recurrent Brownian motion and the other has transient Brownian motion.
\end{thm}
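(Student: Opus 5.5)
We construct two surfaces $X$ and $X'$ with the same underlying graph and use Theorem \ref{thm:graph} to separate their types. A natural choice is a surface whose dual graph $\mathcal G$ is recurrent in one conductance regime and transient in another. Take $\mathcal G$ to be the $\mathbb Z^3$ lattice, or more economically a "ladder" built from $\mathbb Z$ with some pants attached. Realize it twice by upper-bounded pants decompositions:
\begin{itemize}
\item $X'$ has all cuff lengths equal to $1$;
\item $X$ has the same combinatorics, but along selected cuffs the lengths are $\epsilon_n\to 0$.
\end{itemize}
The point is that as a cuff length goes to $0$ the surface geometry barely changes at large scale: the standard collar becomes long, but only logarithmically. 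Meanwhile the conductance of the corresponding graph edge drops to $\epsilon_n$, which can cut the graph.

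\textbf{Step 1: choice of the model.} Let $X'$ be a $\mathbb Z^3$-cover of a closed genus-$g$ surface with a pants decomposition lifted from the base. Then $\mathcal G'$ is roughly isometric to $\mathbb Z^3$ with conductances of unit order, so it is transient, and by Theorem \ref{thm:graph} the Brownian motion on $X'$ is transient.

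Now make the pants decomposition of $X$ coarser in an "effective" sense. Choose a family of cuffs forming, in the graph, cut-sets $\Pi_k$ separating a base vertex from infinity; for instance, in the $\mathbb Z^3$ picture, the edges crossing the sphere of radius $2^k$. Shrink the lengths of the cuffs in $\Pi_k$ so that $\ell(\Pi_k)=\sum_{e\in\Pi_k}\ell_X(e)\le 1$. Then $\sum_k \ell(\Pi_k)^{-1}=\infty$, and Nash-Williams gives recurrence of $\mathcal G$. Hence $X\in O_G$ by Theorem \ref{thm:graph}. The cardinality of $\Pi_k$ is about $4^k$, so each such cuff gets length about $4^{-k}$.

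\textbf{Step 2: rough isometry between $X$ and $X'$.} Define the map pants-to-pants, identifying corresponding hexagon decompositions. Pants with bounded cuffs are uniformly bi-Lipschitz to each other, so the only issue is near the shrunk cuffs. There the collar of length $\epsilon$ has width about $2\log(1/\epsilon)\approx 4k\log 2$. This grows with $k$, so a naive map is not a rough isometry.

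\textbf{Step 3: the main obstacle.} Step 2 shows that with $\mathbb Z^3$ the shrinking must be sublogarithmic in width, which fails. I would instead make the cut-sets small and sparse. Take $\mathcal G$ to be a graph whose cut-sets $\Pi_k$ have bounded cardinality while the graph is still transient. A typical example is two transient pieces, such as $\mathbb Z^3$-type blocks, joined in sequence by single cuffs, arranged so that every path to infinity must cross every $\Pi_k$. Then only finitely many cuffs per scale are shrunk, to lengths $\epsilon_k$ with $\sum\epsilon_k<\infty$ giving infinite total resistance. For example, take $\epsilon_k=e^{-k}$, whose collar width is about $2k$.

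To keep the rough isometry, the long collar in $X$ should correspond in $X'$ to a region of comparable length. So the combinatorics of $X'$ should insert a chain of about $k$ unit-length pants, a tube, at that location. $X$ then has one long thin collar where $X'$ has a long tube of bounded-geometry pants. The two are roughly isometric since both are quasi-isometric to a segment of length about $k$ times $S^1$, with collapsed bounded-diameter fibers.

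Recurrence of $X$ follows from Nash-Williams, since the single-cuff cut-sets have $\sum_k\epsilon_k^{-1}=\infty$. Transience of $X'$ needs care: in $\mathcal G'$ the tube of length $k$ has resistance about $k$, and $\sum k$ diverges. This would make $X'$ recurrent too, which is the key danger. The fix I would try is not to require that all paths cross the tubes. Let $\mathcal G'$ be transient through many parallel tubes, e.g. use $2^k$ tubes at scale $k$, each of resistance $k$, giving effective resistance $k2^{-k}$, which is summable. In $X$, each tube is replaced by one cuff of length $e^{-k}$, giving total conductance $2^ke^{-k}$, which is summable, so $X$ is recurrent. Rough isometry is checked tube by tube, with uniform constants.
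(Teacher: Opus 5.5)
Your final construction in Step 3, once the $\mathbb Z^3$ attempts are discarded, is essentially the paper's. The paper also takes the Cantor-tree surface with exponentially short cuffs at level $k$ (you use $e^{-k}$, the paper uses $2^{-k}$) and compares it with the surface in which each such cuff is replaced by a chain of $\asymp k$ unit-geometry pieces. Recurrence comes from Nash--Williams, transience from the resistance bound $\sum_k k2^{-k}<\infty$, and rough isometry from the collar width $\asymp k$ matching the tube length. You should make explicit that the $2^k$ ``parallel'' tubes at scale $k$ are the edges of a binary tree, so that symmetry (or a uniform flow) makes the resistance computation rigorous. You should also specify that the tubes are chains of two-holed tori, or of once-punctured pants, so that they are honest pants decompositions.
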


\begin{proof}
Let $X$ be the Cantor tree surface with $\ell (\alpha_e)=\frac{1}{2^n}$ for all $e\in E_n$, where $\alpha_e$ the corresponding cuff and $n\in\mathbb{N}$. By Theorem \ref{thm:pandazis-extended}, $X\in O_G$. 
\begin{figure}[h]
 	\leavevmode \SetLabels
	\endSetLabels
\begin{center}
\AffixLabels{\centerline{\epsfig{file =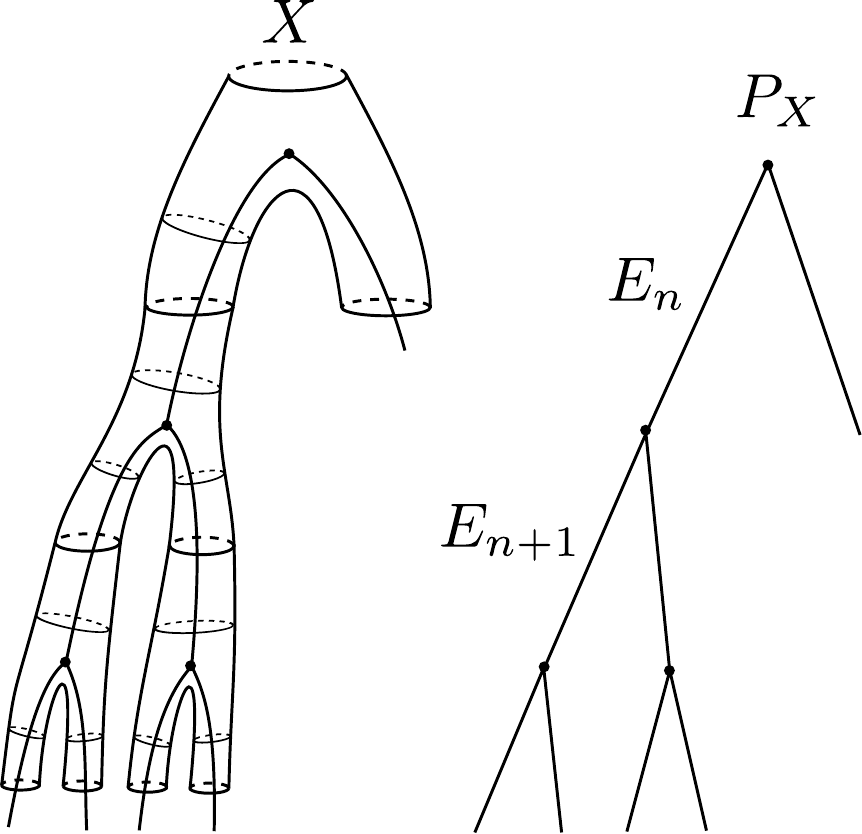, scale=.3} }}
\AffixLabels{\centerline{\epsfig{file =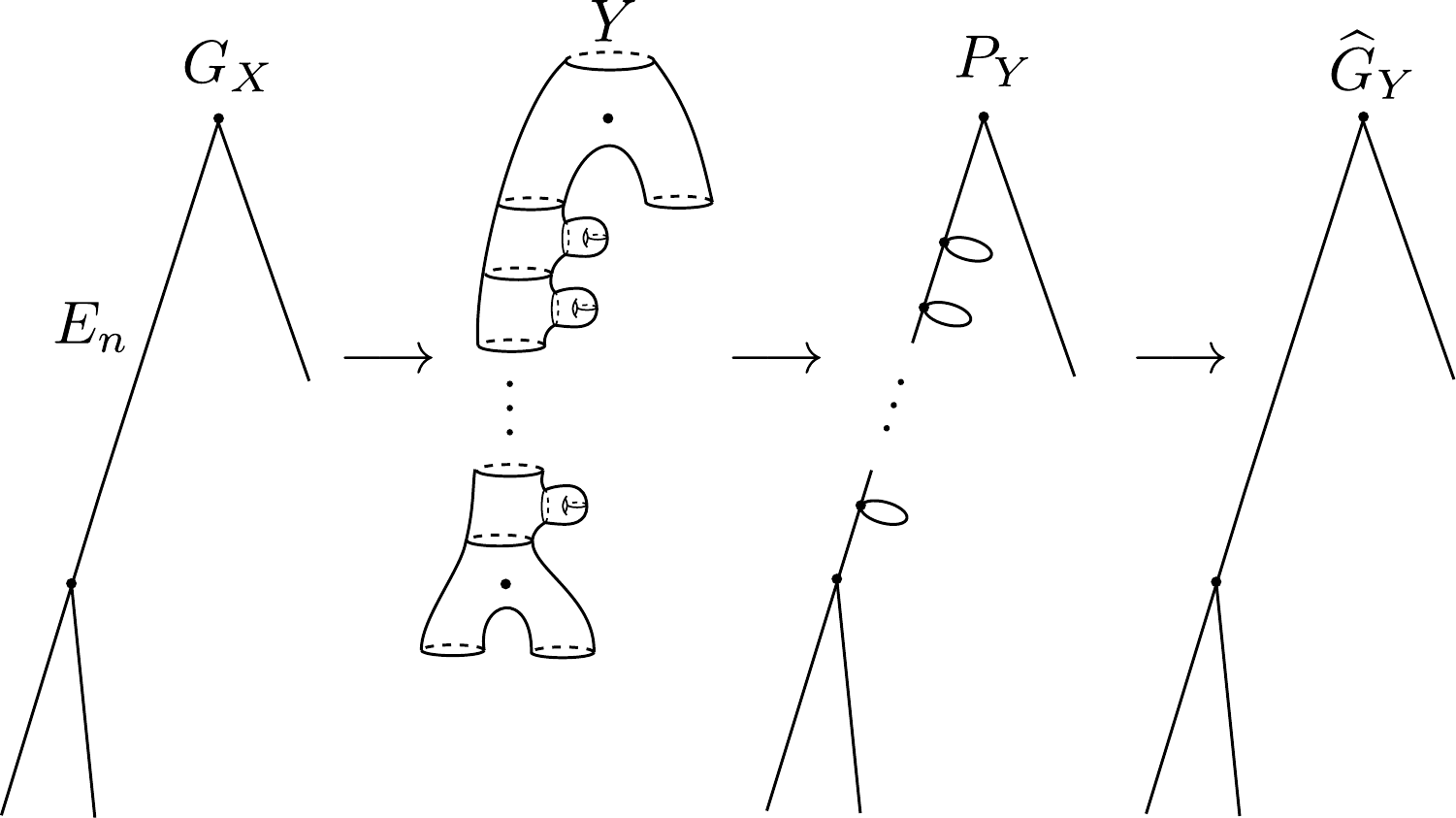, scale=.3} }}
 		\vspace{-20pt}
	\end{center}
 	\caption{From the surface $X$ to the surface $Y$.} 
    \label{fig:X-Y} 
\end{figure}

We form another Riemann surface $Y$ starting from the surface $X$. Note that the edges of the dual graph $G_X$ of $X$ in $E_n$ have resistance $2^n$. The standard collar on $X$ around $\alpha_e$ for $e\in E_n$ has width of the order $\log 1/2^{-n}\asymp n$. We replace each edge $e\in E_n$ by a chain of $n$ two holed tori that are glued linearly and have cuff lengths constantly equal to $1$. Each vertex is replaced by a pair of pants also with cuff lenghts $1$ (see Figure \ref{fig:X-Y}). The choice of twists is irrelevant since the existence of Green's function is a quasiconformal invariant.

We prove that $X$ and $Y$ are roughly isometric in the sense of Kanai \cite{Kanai}. For each pair of pants we fix a point in the complement of the standard collars and connect points of the adjacent pairs of pants by geodesic arcs (see Figure \ref{fig:X-Y}). The union of these arcs on $X$ and $Y$ form length spaces $P_X$ and $P_Y$, respectively. Because the lengths of the cuffs are bounded from the above, the length (metric) spaces $P_X$ and $P_Y$ are roughly isometric to $X$ and $Y$, respectively. Since the lengths of the edges of $P_X$ are asymptotically $\log 1/2^{-n}\asymp n$ and the number of two holed tori is $n$, the metric spaces $P_X$ and $P_Y$ are roughly isometric. Therefore, $X$ and $Y$ are roughly isometric. 
    
It remains to prove that $Y\notin O_G$. Consider the dual graph $G_Y$ to the pants decomposition of $Y$ as in Figure \ref{fig:X-Y}. The dual graph $G_Y$ is roughly isometric to $\widehat{G}_Y$ obtained by collapsing all loops. The recurrence/transience of the graph 
$\widehat{G}_Y$ is equivalent to that of the graph $\widetilde{G}_Y$, where $\widetilde{G}_Y$ is obtained by replacing the edges of $\widehat{G}_Y$ corresponding to $e\in E_n$ by a single edge whose resistance is the sum of the resistances (see  \cite[page 27, \S 2.3]{LyonsPeres}). Then $\widetilde{G}_Y$ is a rooted binary tree (same as $G_X$) but the resistance for an edge of $n$-th generation is a constant multiple of $n$. The random walk on such network is transient (in fact, it is ``very far'' from being recurrent as we need the resistance to be at least  $2^n/n$ for recurrence).   
\end{proof}

\begin{rem}
    The same idea of replacing a long collar with many twice holed tori for topological $\mathbb{Z}^d$-covers of a compact surface with $d\geq 3$ gives another example of two Riemann surfaces that are roughly isometric but one has recurrent Brownian motion and the other has transient.
\end{rem}

We are also able to give a short proof of Kanai and Varapoulos theorem for bounded pants decompositions.

\begin{thm}[Kanai \cite{Kanai} and Varopoulos \cite{Var}]
    \label{thm:kv}
    Let $X$ and $Y$ be two Riemann surfaces with bounded pants decompositions. If they are roughly isometric, then they are simultaneously in $O_G$ or not.
\end{thm}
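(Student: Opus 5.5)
The plan is to reduce everything to the dual graphs via Theorem \ref{thm:graph}. Here ``bounded pants decomposition'' means the cuff lengths lie in a fixed interval $[c,C]$ with $c>0$. By Theorem \ref{thm:graph}, $X\in O_G$ if and only if the random walk on $\mathcal{G}_X$ is recurrent, and similarly for $Y$. It therefore suffices to show that the two conductance graphs have the same type. For networks with conductances bounded above and below, and with bounded degree, the type is invariant under rough isometry of the underlying graphs (see \cite[Chapter 2]{LyonsPeres}). So the task is to show that $\mathcal{G}_X$ and $\mathcal{G}_Y$ are roughly isometric as graphs whenever $X$ and $Y$ are.

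First, the conductances $\ell_X(e)\in[c,C]$ are uniformly comparable to $1$. Hence, by Rayleigh monotonicity, the type of $(\mathcal{G}_X,\ell_X)$ equals the type of the simple random walk on $\mathcal{G}_X$. The degree is at most $3$, counting loops with multiplicity, so we may also pass to the simple graph $\hat{\mathcal{G}}_X$ without changing the type.

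Second, I would show that $X$ is roughly isometric to $\mathcal{G}_X$ with its graph metric. As in the proof of Theorem \ref{thm:kanai_ce}, pick in each pair of pants a point $p_v$ outside the standard collars. The cuff lengths lie in $[c,C]$, so every pair of pants has diameter bounded by a constant $D(c,C)$. Punctures are handled by truncating cusp neighborhoods, which lie within bounded distance of a compact core. Adjacent pants therefore give $d_X(p_v,p_w)\le 2D$. Conversely, by the collar lemma every cuff has a collar of width at least $w(C)>0$. Any path in $X$ of length $L$ crosses at most $L/w+1$ cuffs, so $d_{\mathcal{G}}(v,w)\le d_X(p_v,p_w)/w+1$. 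This shows that $v\mapsto p_v$ is a quasi-isometric embedding with $D$-dense image, i.e. a rough isometry in Kanai's sense. The same argument applies to $Y$.

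Composing $\mathcal{G}_X\to X\to Y\to\mathcal{G}_Y$ then gives a rough isometry $\mathcal{G}_X\to\mathcal{G}_Y$. By Kanai's theorem for graphs of bounded degree \cite{Kanai} (cf.\ \cite[Theorem 2.17]{LyonsPeres}), the simple random walks on $\mathcal{G}_X$ and $\mathcal{G}_Y$ are both recurrent or both transient, and Theorem \ref{thm:graph} finishes the proof.

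The main obstacle is the second step, and specifically the lower bound showing that distance in $X$ controls graph distance. This is where the lower bound $c$ on cuff lengths is essential. Without it, thin collars still force the graph distance to be small, but the collars are long (width of order $\log(1/\ell)$). The map $v\mapsto p_v$ then fails to be coarsely Lipschitz, which is exactly the phenomenon exploited in Theorem \ref{thm:kanai_ce}. With $\ell\ge c$, collar widths are bounded above, which gives the upper bound $d_X\le 2D\, d_{\mathcal{G}}$. Collar widths are also bounded below, which gives the reverse inequality.
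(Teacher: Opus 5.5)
Your proposal is correct and follows essentially the paper's route. You transfer the rough isometry $X\simeq Y$ to one between the dual graphs, then apply the graph version of Kanai's theorem \cite[Theorem 2.17]{LyonsPeres} together with Theorem \ref{thm:graph}. The paper uses Theorem \ref{thm:graph} only implicitly, and it passes through auxiliary length spaces $P_X,P_Y$, whereas you map vertices directly to points $p_v$ and check the coarse bi-Lipschitz bounds via pants diameters and collar widths.

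One aside is wrong but harmless. Cusp neighborhoods are \emph{not} within bounded distance of a compact core, so with punctures $v\mapsto p_v$ would not be coarsely onto. In the intended positive-injectivity-radius setting there are no cusps. If punctures are allowed, attach one ray to $\mathcal{G}_X$ per puncture: this keeps the degree bounded and does not change the type.
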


\begin{proof}
    Recall that the metric spaces $P_X$ and $P_Y$ are roughly isometric to $X$ and $Y$, respectively. Moreover, $P_X$ and $P_Y$ are roughly isometric to the dual graphs $G_X$ and $G_Y$ (as metric spaces), respectively. By post-compositions with additional rough isometries, we arrange that the vertices of $G_X$ ($G_Y$) are mapped onto vertices of $P_X$ ($P_Y$). Since $X$ and $Y$ are roughly isometric by the assumption, it follows that $G_X$ and $G_Y$ are roughly isometric as weighted graphs. Then \cite[\S 2.6, Theorem 2.17]{LyonsPeres} implies the theorem.
\end{proof}

We conclude this section with the proof of Theorem \ref{thm:rough-isom2} stated in introduction.

\begin{proof}[Proof of Theorem \ref{thm:rough-isom2}]
From Theorem \ref{thm:graph}, it suffices to prove that the random walk of $\mathcal G$ is recurrent if and only if the random walk on $\mathcal G'$ is recurrent. Since   $\mathcal G$ and $\mathcal G'$ are roughly isometric, this is the content of the graph version of Kanai's Theorem,  see \cite[\S 2.6, Theorem 2.17]{LyonsPeres}.
\end{proof}

\section{Applications to the type problem}

\label{sec:typeex}

In this section, we discuss some applications of Theorem \ref{thm:graph}.

\subsection{The type problem on conductance graphs}

A conductance graph $G = (V,E,\ell)$ is a graph with a countable vertex set $V$ and a countable non-oriented edge set $E$ equipped with a conductance function $\ell : E \to [0,\infty)$ such that for any $x$, 
$$
\sum_{e \in x} \ell(e) < \infty,
$$
where the sum is over all edges $e$ adjacent to $x$. Loops and multi-edges are allowed in $G$. If $(x,y) \in V^2$, we denote by $\ell(x,y) = \ell(y,x) = \sum_{e \in x \cap y } \ell(e)$ where the sum is over all edges $e$ adjacent to $x$ and $y$.

The Markov chain $(X_t)_{t \in \mathbb N}$ on $V$ associated to a conductance graph $G$ has transition probability defined for $(x,y) \in V^2$ by
\begin{equation}\label{eq:defP}
P(x,y) = \frac{\ell(x,y)}{\sum_{z \in V} \ell(x,z)},
\end{equation}
with the convention that if $\sum_{z \in V} \ell(x,z) = 0$ then $P(x,x) = 1$. We say that $x$ is recurrent/transient if $(X_t)$ started at $X_0 = x$ is recurrent/transient.

Theorem \ref{thm:graph} reduces the study of the type problem of an infinite Riemann surface $X$ with an upper-bounded pants decomposition to the study of the type problem of its pant graph $\mathcal G$. 

There are multiple classical tools to determine transience or recurrence of the Markov chain \eqref{eq:defP} such as Royden criterion, Rayleigh's monotonicity,  the Nash-Williams criterion, network reduction or rough isometries. We refer to \cite[Chapter 2]{LyonsPeres} for a modern exposition. We will use two results which we quote explicitly: 

\begin{thm}[Rayleigh’s Monotonicity Principle] \label{thm:R} Let $G = (V,E,\ell)$ and $G' = (V,E,\ell')$ be two conductance graphs with the same base graph. Assume that for all $e \in E$, we have $\ell(e) \leq \ell'(e)$. For any $x \in V$, if $x$ is transient in $G$ then $x$ is transient in $G'$. Conversely, if $x$ is recurrent in $G'$ then $x$ is recurrent in $G$.
\end{thm}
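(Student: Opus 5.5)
The plan is to prove Rayleigh's Monotonicity Principle through the variational characterization of transience by finite-energy unit flows. This is the same characterization used in the proof of Theorem \ref{thm:graph}, there quoted from \cite[Theorem (2.12)]{Woess}. A vertex $x$ is transient in a conductance graph $G=(V,E,\ell)$ if and only if there is a unit flow $\theta$ from $x$ to infinity with finite energy
$$\mathcal E_\ell(\theta)=\sum_{e\in E}\frac{\theta(e)^2}{\ell(e)}<\infty.$$
Here a unit flow is an antisymmetric edge function with divergence $\delta_x$ at every vertex, and we use the convention that $\theta(e)=0$ whenever $\ell(e)=0$. Equivalently, the effective resistance from $x$ to infinity, $\mathcal R_\ell(x\to\infty)=\inf_\theta \mathcal E_\ell(\theta)$, is finite. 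Multi-edges and loops cause no trouble. A loop carries no net flow, and parallel edges can be merged by the parallel law, exactly as in the proof of Theorem \ref{thm:graph}. The pointwise inequality $\ell\le\ell'$ is preserved under merging parallel edges, since conductances add.

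With this in hand, the first assertion is immediate. Suppose $x$ is transient in $G$, and let $\theta$ be a unit flow from $x$ to infinity with $\mathcal E_\ell(\theta)<\infty$. The flow condition involves only the underlying graph, not the conductances, so $\theta$ is also a unit flow in $G'$. An edge with $\ell(e)=0$ carries $\theta(e)=0$, so it contributes nothing to either energy. For every other edge, $\ell(e)\le \ell'(e)$ gives $\theta(e)^2/\ell'(e)\le \theta(e)^2/\ell(e)$. Summing over edges gives $\mathcal E_{\ell'}(\theta)\le \mathcal E_\ell(\theta)<\infty$, so $x$ is transient in $G'$. The converse statement is the contrapositive and needs no further argument.

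The only point requiring care is the flow criterion itself in this generality. It must cover locally finite sums of conductances rather than locally finite graphs, possibly zero conductances, and the paper's convention that $P(x,x)=1$ when a vertex has total conductance zero. A clean way to handle this is to first restrict to the connected component of $x$ in the subgraph of edges with positive conductance. Recurrence and transience of $x$ depend only on this component, and the monotonicity $\ell\le\ell'$ survives the restriction because the component of $x$ in $G$ is contained in its component in $G'$. On the component one applies the Lyons--Peres/Woess criterion \cite[Chapter 2]{LyonsPeres}. One can also argue via Dirichlet principles: $\mathcal R(x\to\infty)^{-1}=\inf\{\sum_e \ell(e)(f(e^+)-f(e^-))^2\}$ over finitely supported $f$ with $f(x)=1$. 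This quantity is visibly monotone in $\ell$, and transience is equivalent to its positivity. I expect verifying the applicability of the criterion in this generality to be the only real obstacle; the comparison itself is a one-line inequality.
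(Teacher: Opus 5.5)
The paper does not prove this statement; it quotes Rayleigh's principle from \cite[Chapter 2]{LyonsPeres}. Your argument is correct and is essentially the standard textbook proof found there. You restrict to the positive-conductance component of $x$, which handles zero conductances and the $P(x,x)=1$ convention, and merge parallel edges. You then note that a finite-energy unit flow for $\ell$ is a unit flow for $\ell'$ with no larger energy, and invoke T.~Lyons' flow criterion in both directions.
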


Interestingly, Theorem \ref{thm:graph} allows to transfer this monotonicity for surfaces with a graph-isomorphic pant decomposition and monotone cuff-lengths.

The second classical tool that we shall use is the so-called Nash-Williams criterion which is a powerful tool to prove recurrence. Let $G= (V,E,\ell)$ and $x \in V$, we say that $\Pi \subset E$ is a cutset for $x$ if every infinite simple path from $x$ intersects $\Pi$. If $\Pi \subset E$, we set $\ell(\Pi) = \sum_{e \in \Pi} \ell(e)$.

\begin{thm}[Nash-Williams Criterion] \label{thm:NW} Let $G = (V,E,\ell)$ be an infinite  conductance graph and $x \in V$. Let $(\Pi_n)_{ n \in \mathbb N}$ be a sequence of pairwise disjoint cutsets for $x$. Then $x$ is recurrent if 
$$
\sum_{n=1}^\infty \ell(\Pi_n) ^{-1}= \infty. 
$$
\end{thm}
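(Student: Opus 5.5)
The natural route is through unit-current flows and Thomson's principle. Recall the standard fact, already invoked in the proof of Theorem \ref{thm:graph} via \cite[Theorem (2.12)]{Woess}: $x$ is transient if and only if there is a unit flow $\theta$ from $x$ to infinity of finite energy $\sum_{e} \theta(e)^2/\ell(e)$. So we assume $x$ is transient, fix such a flow $\theta$, and derive a contradiction with $\sum_n \ell(\Pi_n)^{-1}=\infty$. If some $\ell(e)=0$, that edge carries no flow of finite energy, so we may discard it (set $\theta(e)=0$) and regard it as absent.

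\textbf{Step 1: each cutset carries the whole flow.} For each $n$, since $\Pi_n$ separates $x$ from infinity, the total flow through $\Pi_n$ is at least the net flow out of $x$:
$$
\sum_{e \in \Pi_n} |\theta(e)| \geq 1 .
$$
To justify this, let $W_n$ be the set of vertices reachable from $x$ by paths avoiding $\Pi_n$. If $W_n$ is finite, summing the divergence condition of $\theta$ over $W_n$ shows that the net outflow across the edge boundary of $W_n$, which is contained in $\Pi_n$, equals $1$. If $W_n$ is infinite, we cannot sum directly. Instead we use that a finite-energy unit flow decomposes into a superposition of paths from $x$ to infinity of total mass $1$ plus cycles, and each such path meets $\Pi_n$. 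This is the one point needing care. An alternative is to approximate $\theta$ by flows on finite exhaustions.

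\textbf{Step 2: Cauchy--Schwarz on each cutset.} For each $n$,
$$
1 \leq \Big(\sum_{e\in\Pi_n}|\theta(e)|\Big)^2 \leq \Big(\sum_{e\in \Pi_n}\ell(e)\Big)\Big(\sum_{e\in\Pi_n}\frac{\theta(e)^2}{\ell(e)}\Big),
$$
so $\sum_{e\in\Pi_n}\theta(e)^2/\ell(e) \geq \ell(\Pi_n)^{-1}$.

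\textbf{Step 3: sum over $n$.} Since the cutsets are pairwise disjoint,
$$
\infty > \sum_e \frac{\theta(e)^2}{\ell(e)} \geq \sum_n \frac{1}{\ell(\Pi_n)} = \infty ,
$$
a contradiction. Hence $x$ is recurrent.

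The main obstacle is the rigorous form of Step 1 for infinite $W_n$, namely showing that a finite-energy unit flow to infinity sends mass $1$ through every cutset. One can avoid it by working instead with the effective resistance $R(x\to\infty)=\lim_k R(x\to Z_k)$ over a finite exhaustion with complements $Z_k$. For finite networks the same Cauchy--Schwarz computation applies, using only the finitely many cutsets $\Pi_1,\dots,\Pi_N$ that separate $x$ from $Z_k$ for $k$ large. This gives $R(x\to Z_k)\geq \sum_{n\le N}\ell(\Pi_n)^{-1}$, so the resistance to infinity is infinite and $x$ is recurrent.
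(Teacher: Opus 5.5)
\textbf{Overall.} The paper does not prove this criterion; it quotes it from \cite[Chapter 2]{LyonsPeres}. The argument there is essentially yours: Cauchy--Schwarz on the flow through each of the disjoint cutsets. Steps 2 and 3 are fine.

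\textbf{The gap is Step 1, which you flag but do not close.} Neither of your two patches works as written.
\begin{enumerate}
\item The exhaustion variant does not avoid the issue. $\Pi_n$ separates $x$ from $Z_k$ exactly when $W_n$ lies inside the $k$-th finite stage of the exhaustion. So ``$\Pi_n$ separates $x$ from $Z_k$ for $k$ large'' is equivalent to $W_n$ being finite, which is the same point in disguise.
\item The path decomposition is only asserted, and as stated it does not yield the inequality. For an arbitrary finite-energy flow, such as the one supplied by Woess's criterion, the divergence-free remainder on an infinite graph need not consist of finite cycles; a bi-infinite path is an example. You need the decomposition to be conformal, meaning every piece runs in the direction of $\theta$, so that the path mass through $e$ is at most $|\theta(e)|$. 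Without that, cancellation on edges of $\Pi_n$ prevents you from concluding $\sum_{e\in\Pi_n}|\theta(e)|\geq 1$.
\end{enumerate}

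\textbf{The missing observation (locally finite case).} In the locally finite setting, infinite $W_n$ never occurs. This is the setting of \cite{LyonsPeres}, and the only one used in the paper: pants graphs have degree at most $3$, and $\mathbb Z^d$ is locally finite. Consider the graph on $W_n$ whose edges are the edges of $G$ not in $\Pi_n$ joining vertices of $W_n$. It is connected and locally finite. If $W_n$ were infinite, K\"onig's lemma would give an infinite simple path from $x$ in it, i.e.\ one avoiding $\Pi_n$, contradicting the cutset property. Hence $W_n$ is finite and every edge leaving $W_n$ lies in $\Pi_n$. Your finite-case computation then gives $\sum_{e\in\Pi_n}|\theta(e)|\geq 1$, and Steps 2--3 (or your exhaustion version) finish the proof.

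\textbf{The general case.} The paper's definition of a conductance graph formally allows infinite degree. To cover that, do not use an arbitrary flow. Take for $\theta$ the unit current flow $i(\vec e)=\ell(e)\bigl(v(e^-)-v(e^+)\bigr)$, where $v$ is the voltage (the Green function $G(x,\cdot)$ divided by the total conductance); it has finite energy when $x$ is transient.
\begin{enumerate}
\item It is acyclic, since $v$ strictly decreases along every edge carrying positive current.
\item Run the Markov chain from $x$ that leaves each vertex $u$ along $\vec e$ with probability proportional to $\max\{i(\vec e),0\}$. By acyclicity it is a random infinite simple path from $x$.
\item An induction on the number of steps shows it uses each edge $e$ with probability at most $|i(e)|$.
\end{enumerate}
Since this path must meet $\Pi_n$, you get $\sum_{e\in\Pi_n}|i(e)|\geq 1$ with no finiteness assumption on $W_n$.
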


Establishing transience is usually more difficult. A classical method consists in defining a flow going to infinity, this is the content of Royden criterion,  see \cite{Lyons83,LyonsPeres} for a precise statement.

\subsection{The type problem for the Cantor tree surface}

Let $X$ be an infinite Riemann surface that is planar and whose space of ends is a Cantor set. We fix a topological pants decomposition of $X$ as in Figure \ref{fig:cantor}.

We denote by $V$ the set of pants, $E$ the set of cuffs. For integer $n \geq 1$, we denote by $E_n$ the set the cuffs of the pants decomposition that are boundaries on the level $n\geq 2$. We have $|E_1| = 1$ and $|E_n| = 2^n $ for $n \geq 2$ (where $| \cdot |$ denotes the cardinality of a set). The question that we are interested is to determine whether $X\in O_G$ or not from the data of the Fenchel-Nielsen parameters on the pants decomposition.

We are now ready to prove Theorem \ref{thm:pandazis-extended} stated in introduction.

\begin{proof}[Proof of Theorem \ref{thm:pandazis-extended}]
The last claim follows immediately by applying the first two claims with $\psi(n)$ equal to $C$ or $1/C$ times $\ell_X(E_n)$ (recall that $|E_n| = 2^n$).

	By Theorem \ref{thm:graph}, it is enough to determine whether the random walk $(X_t)_{t \in \mathbb N}$ on the graph of the Cantor tree surface $G = (V,E,\ell_X)$ is recurrent or transient. Moreover by Rayleigh's principle, Theorem \ref{thm:R}, it is sufficient to assume that $\ell_X(e) = \psi(n)/2^n$ for all $e \in E_n$. By transitivity, we can assume that $X_1 = x_1$ is a root vertex adjacent to $e \in E_1$.
    
Then the Nash-Williams criterion, Theorem \ref{thm:NW}, applied to $\Pi_n = E_n$ implies immediately that $X \in O_G$ if $\sum_{n \geq 1} (\psi(n))^{-1} = \infty$. 

Conversely, assume that $\sum_n \psi(n)^{-1} < \infty$. Let $D_t -1 \in \{0,1,2 , \cdots \}$ be the distance between $X_t$ and $x_1$. In order to prove the transience of $(X_t)$, it is sufficient to prove that the transience of $(D_t)$. We note that if $D_t = n $, the probability that $D_{t+1} = n+1$ given $(X_1,\ldots,X_t)$ is equal to 
$$
\frac{2 \psi(n+1)/2^{n+1}}{2 \psi(n+1) / 2^{n+1} + \psi(n)/2^n } = \frac{\psi(n+1)}{\psi(n+1) + \psi(n)} .
$$
It follows that $D_t$ is a Markov chain on $\mathbb N$ associated to the conductances $\ell(n,n+1) = \psi(n+1)$. It is well-known that this Markov chain is transient if and only if $\sum_n \psi(n)^{-1} < \infty$ (see for example \cite[XV.8]{Feller}). It concludes the proof. \end{proof}


We can also prove Theorem \ref{prop:shorten}  stated in introduction.

\begin{proof} [Proof of Theorem \ref{prop:shorten} ]
We start with the proof of (i). We apply  again the Nash-Williams criterion to $\Pi_n = E_n$. We have for $n$ large enough
$$
\ell_X( \Pi_n) = \sum_{e \in \Pi_n} \ell_X (e) \leq |E_n| \psi(n)/2^{n} + C |A_n| \leq 2 C K(n),
$$
where we have used that $\psi(n) \leq K(n)$ for all $n$ large enough. Since $\alpha \leq 1$, $\sum_n K(n)^{-1} = \infty$ and the conclusion follows from Theorem \ref{thm:NW}.

We next prove (iii) by another application of Nash-Williams. This time we cannot take $\Pi_n = E_n$ as cutset. Recall that a matching $M \subset E$ is a subset of edges  such that no two edges in $M$ share a common adjacent vertex in $V$.  We first claim that for any matching $M$ if $A_n  \subset M \cap E_n$ then $X \in O_G$.

First, since $\sum_{n} \psi(n)^{-1} = \infty$, either $\sum_{n} \psi(2n)^{-1} = \infty$ or $\sum_{n} \psi(2n+1)^{-1} = \infty$. Let us assume for example $\sum_{n} \psi(2n)^{-1} = \infty$.  We then define $\Pi_n = (E_{2n} \cup E_{2n+1}) \backslash  M$ (if $\sum_{n} \psi(2n+1)^{-1} = \infty$ we would have taken $\Pi_n = (E_{2n-1} \cup E_{2n}) \backslash  M$).

By construction $\Pi_n$ is a pairwise disjoint collection of cutsets. Indeed, any infinite simple path visits at least two adjacent edges in $E_{2n} \cup E_{2n+1}$. At least one of them is not in $M$ by the definition of matching, it is thus in $\Pi_n$. Moreover, since $\ell_X(e) \leq \psi(n)/2^n$ for all $e \in E_n \backslash A_n$, we find
$$
\ell_X (\Pi_n)   = \sum_{e \in \Pi_n} \ell_X (e)  \leq 2^{2n} \psi(2n) / 2^{2n} + 2^{2n+1} \psi(2n+1) / 2^{2n+1}\leq C \psi(2n),
$$
where we have used that $\psi(n+1)  \leq (C -1)\psi(n)$ for some $C >1$. Then $\sum_{n} \psi(2n)^{-1} = \infty$ and Theorem \ref{thm:NW} imply that $X \in O_G$.

To conclude the proof of (iii), it remains to prove that there exists a matching $M$ such that satisfies $|M_n| \sim (1/3) |E_n|$ where $M_n = M \cap E_n$. Indeed, then the claim follows by taking $A = M$. We say that a  matching is perfect if for all $v \in V$, there exists $e \in M$ which is adjacent to $v$. Let $\{ x_1,y_1\}$ be the vertices in $V$ adjacent to $ e_1 \in E_1$. We can construct iteratively by levels a perfect matching. For $n \geq 1$, let $V_n$ be the set of vertices at distance $n-1$ from $\{x_1,y_1\}$ in a perfect matching which are adjacent to an edge in $E_n$. We have $|M_n| = |M \cap E_n| = |V_n|$. For any $n \geq 1$, there are $2^{n}$ vertices at distance $n-1$ from $\{ x_1,y_1\}$. We get $|M_{n+1}| + |M_{n}| = 2^{n} $. Therefore $|M_{n}| \sim (1/3) 2^{n}  = (1/3) |E_n|$ for any perfect matching. Statement (iii) follows.

It remains to prove (ii). For some constant $\theta >0$ to be defined and any integer $k \geq 1$, we set $L_k = \theta 2^k / k^\alpha $. We build a subtree $T$ of the conductance graph of $X$ and define $A$ as the set of edges of $T$. This subtree is defined as follows: we pick a vertex $v_1$ at depth $n_1$  in the Cantor tree with $n_1$ to be fixed later. From each of the two offspring of $v_1$, we include in $T$ a line segment  of length $L_1$ and denote by, say  $v_{21},v_{22}$, the end vertices of the line segment. We then branch: we include in $T$ the two offspring of $v_{2i}$, $i = 1,2$  and we draw from there $2^2$ line segments of length $L_2$. We then branch again and repeat with $2^3$ line segments of length $L_3$ and so on. After $h$ steps, we have $2^h$ line segments and we have reached depth 
$$
n_{h+1} = n_1 + \sum_{k=1}^h L_h = n_1 + \theta \sum_{k=1}^h \frac{2^k}{k^\alpha}.
$$
For some $c_\alpha >0$, we have $n_{h+1} \geq n_1 + c_\alpha \theta 2^{h+3}/h^\alpha$. In particular, if $\theta$ and $n_1$ were chosen large enough, we have that $n_h (\log n_h)^{\alpha} \geq 2^{h}$ for all $h \geq 1$. Therefore, if $A$ is the set of edges of $T$, we have that $|A \cap E_n| \leq K(n)$ for all $n \geq 1$.

It remains to check that  if $\ell_X (e) = 1$ for all $e \in A$, then  $X \notin O_G$. By Theorem  \ref{thm:graph} and Theorem \ref{thm:R}, it suffices to prove that the simple random walk on $T$ is transient. The series law of electric networks implies that a discrete line segment of length $L$ with unit conductance has global conductance $1/L$, see \cite[Section 2.3]{LyonsPeres}. Hence, we may reduce the graph $T$ into an infinite $2$-ary tree $\hat T$ where all edges at depth $k$ have conductance $\varphi(k) := 1/L_k$, see \cite[Section 2.3]{LyonsPeres}. As already shown in the last paragraph of the proof of Theorem \ref{thm:pandazis-extended}, the associated Markov chain on $\hat T$ will be transient if (and only if) 
$$\sum_{k=1}^\infty (2^k \varphi(k))^{-1} < \infty.$$
Since $2^k \varphi(k) = k^\alpha $, the above sum is indeed finite if $\alpha > 1$.
\end{proof}


\begin{rem}\label{rk:line}
  We note that there is an improvement of statement (iii) where for any $p  < 2/3$, $|A \cap E_n| \geq p |E_n|$ for all $n$ large enough.  Indeed, instead of considering a perfect matching in the proof of Statement (iii), we could instead consider line ensembles. For integer $k \geq 1$, we say that $L = \cup_{i} L_i$ is a $k$-line ensemble if $L_i$ is a subset of $k$ edges forming a line segment and $L_i \cap L_j = \emptyset$ for all $i \ne j$. For $k=1$, we retrieve a matching. A $k$-line ensemble is perfect if all vertices are on some line segment. Arguing as in the proof of Theorem \ref{prop:shorten} , the Nash-Williams criterion implies that if $A_n  \subset L \cap E_n$ then $X \in O_G$. Moreover, we may check that any perfect $k$-line ensemble satisfies $|L \cap E_n| \sim (2 k / (3(k+1)))|E_n|$.
\end{rem}

\subsection{The type problem for the topological $\mathbb{Z}^d$-covers}

Consider a $\mathbb{Z}^d$-lattice in $\mathbb{R}^d$ and connect the points of $\mathbb{Z}^d$ that are at a distance $1$ by Euclidean arcs to make an embedded graph. Let $X$ be an infinite Riemann surface obtained by taking a uniform $\epsilon$-neighborhood of the above graph. For $d\geq 3$, the surface $X$ is not parabolic (see Lyons-Sullivan \cite{LyonsSullivan} or \cite{Saric-ft} for a different proof).

More generally, we consider in this subsection an infinite Riemann surface $X$ which has a pant decomposition which is roughly equivalent to a conductance graph on the $\mathbb Z^d$ lattice, for the definition of rough equivalence, see \cite[Section 2.6]{LyonsPeres}. The following theorem improves significantly on \cite[Corollary 9.6]{BHS}.

 \begin{thm}
 	\label{thm:BHS-extended}
Let $X$ be the infinite Riemann surface which has a pant decomposition $\mathcal G = (V_X,E_X,\ell_X)$ such that  $\sup_{e \in E_X} \ell_X(e)  < \infty$. For some $d \geq 3$, assume that $\mathcal G$ is roughly equivalent to the standard $\mathbb Z^d$-lattice in $\mathbb R^d$ whose edges are denoted by $E$ and equipped with conductances $(\ell(e))_{e \in E}$. For integer $n \geq 1$, let $E_n \subset E$ be the subset of edges whose mid-point has $L^2$-norm in $[n-1,n)$. If there exists a positive sequence $(\psi(n))_{n \in \mathbb N}$ with $\sum_n \psi(n)^{-1} < \infty$ such that  
  $$
  \hbox{ for all $n \geq 1$ and $e \in E_n$, }  \quad  \frac{\psi(n)}{n^{d-1}} \leq \ell(e) 
  $$  
    then $X \notin O_G$. Conversely, $X \in O_G$ if  $\sum_n \psi(n)^{-1} = \infty$ and 
       $$
  \hbox{ for all $n \geq 1$ and $e \in E_n$, }  \quad  \ell(e) \leq \frac{\psi(n)}{n^{d-1}}.
  $$    
  In particular, if some $C \geq 1$, for all $n \geq 1$, $e ,  f \in E_n$, $$ \frac 1 C   \leq   \frac{\ell(e)}{\ell(f)} \leq C$$ then $X \in O_G$ if and only if $\sum_n \ell(E_n)^{-1} = \infty$.
 \end{thm}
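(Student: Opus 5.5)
By Theorem \ref{thm:graph}, the statement reduces to the recurrence or transience of the random walk on the pants graph $\mathcal G$. By Theorem \ref{thm:rough-isom2} and its proof (the graph version of Kanai's theorem, \cite[Theorem 2.17]{LyonsPeres}), recurrence is invariant under rough equivalence of conductance graphs. So it suffices to decide recurrence of the walk on $\mathbb Z^d$ with conductances $(\ell(e))_{e\in E}$. The last claim then follows as in the proof of Theorem \ref{thm:pandazis-extended}: take $\psi(n)$ equal to $C$ or $C^{-1}$ times $\ell(E_n)$. This uses $|E_n|\asymp n^{d-1}$, which holds because $E_n$ consists of the edges whose midpoints lie in a spherical shell of width one. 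By Rayleigh's principle (Theorem \ref{thm:R}), in each direction we may assume $\ell(e)=\psi(n)/n^{d-1}$ exactly on $E_n$.

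For recurrence, assume $\sum_n\psi(n)^{-1}=\infty$ and apply the Nash-Williams criterion (Theorem \ref{thm:NW}) with $\Pi_n=E_n$, taking the origin as base vertex. We need that $E_n$ is a cutset. An infinite simple path from $0$ has vertex norms tending to infinity, and consecutive vertices change norm by at most $1$. Hence some edge of the path has midpoint norm in $[n-1,n)$: the midpoint norms of consecutive edges differ by at most $1$ and eventually exceed $n$. The sets $E_n$ are pairwise disjoint and $\ell(E_n)\le |E_n|\psi(n)/n^{d-1}\le C\psi(n)$, so $\sum_n \ell(\Pi_n)^{-1}=\infty$ gives recurrence.

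For transience, assume $\sum_n\psi(n)^{-1}<\infty$. Here the radial reduction used for the tree does not apply, since $|X_t|$ is not a Markov chain on $\mathbb Z^d$. Instead I would use Royden's criterion / Thomson's principle and build a unit flow $\theta$ from $0$ to infinity with finite energy $\sum_e \theta(e)^2/\ell(e)$. I would take the standard radial flow on $\mathbb Z^d$ used to prove transience for $d\ge3$, for instance the one obtained by discretizing the uniform flow on $\mathbb R^d$ along random straight rays, or the Lyons–Peres flow via random paths as in \cite[Section 2.5]{LyonsPeres}. Its key property is $|\theta(e)|\le C n^{-(d-1)}$ for $e\in E_n$. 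Then the energy is bounded by
\[
\sum_n |E_n|\,\frac{C^2 n^{-2(d-1)}}{\psi(n)/n^{d-1}}\le C'\sum_n \psi(n)^{-1}<\infty,
\]
which gives transience.

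The main obstacle is making this flow rigorous with the uniform bound $|\theta(e)|\lesssim n^{1-d}$ on each shell; a flow that is merely $\ell^2$-small on average does not suffice. I would first try the random-path construction: pick a uniformly random direction and follow a lattice path staying within bounded distance of the ray. The expected number of crossings of $e$ is then of order the solid angle subtended by $e$, which is $\asymp n^{1-d}$. This expectation defines a unit flow with the required pointwise bound. The hypothesis $d\ge3$ enters only through the existence of such a flow and through $|E_n|\asymp n^{d-1}$; the argument gives no new information for $d\le 2$.
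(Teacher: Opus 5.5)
Your proposal is correct and follows essentially the same route as the paper: reduction to $(\mathbb Z^d,\ell)$ via Theorem~\ref{thm:graph} and Kanai's theorem, Rayleigh monotonicity, Nash--Williams with $\Pi_n=E_n$ for recurrence, and the random-ray path method with $\mathbb P(e\in\Pi)\lesssim n^{1-d}$ for transience. You also spell out details the paper leaves implicit (why $E_n$ is a cutset, and the role of $|E_n|\asymp n^{d-1}$ in the final claim); only your closing remark is off, since the same computation works verbatim for $d\le 2$ and so $d\ge 3$ is never actually used, but this does not affect the proof.
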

\begin{proof}
The proof is similar to the proof to Theorem \ref{thm:pandazis-extended}. By Theorem \ref{thm:graph} and Kanai's Theorem \cite[Theorem 2.17]{LyonsPeres}, it suffices to establish the type of the random walk on $\mathbb Z^d$ equipped with the conductances $(\ell(e))_{e \in E}$. Also by Rayleigh's principle, Theorem \ref{thm:R} it is sufficient to assume that $\ell(e) = \psi(n)/n^{d-1}$ for all $e \in E_n$. In what follows $C ,C'> 0$ are constants that can change from line to line.

If $\sum_n \psi(n)^{-1} = \infty$ then the claim is a direct consequence of Nash-Williams criterion, Theorem \ref{thm:NW}, applied to $\Pi_n = E_n$. Indeed, $E_n$ is a cutset since all edges of $e \in E$ embedded in $\mathbb R^d$ have Euclidean length $1$. Also, there are at most $C n^{d-1}$ elements in $E_n$. 

We now assume that $\sum_n \psi(n)^{-1} < \infty$. We give a proof using the random path method as explained in \cite{LyonsPeres}. Let $\theta \in S^{d-1}$ be a uniform Haar distributed random point of the sphere $S^{d-1}$ and consider the infinite random ray $R = \{ t \theta : t \geq 0 \} \subset \mathbb R^d$. Let $\Pi$ be a simple random path in $\mathbb Z^d$ which stays within distance $d$ of $R$. We may choose $\Pi$ measurably (for example, closest in Hausdorff distance). As explained in \cite[pp 40-41]{LyonsPeres}, a sufficient condition for transience is the condition: 
$$
\sum_{e \in E} \frac{\mathbb P (e \in \Pi)^2 }{\ell(e)}  < \infty,
$$
where $\mathbb P( \cdot) $ is the underlying probability measure. To this end, we observe that for $n \geq 1$ and $e = \{ x,y\} \in E_n$, if $u = x / \|x\|_2 \in S^{d-1}$, we have $\mathbb P ( e \in \Pi) \leq \mathbb P ( \| n \theta - n u \|_2 \leq C ) \leq C' / n^{d-1}$. Indeed, $\mathbb P ( \| \theta - u \|_2 \leq C/n )$ is independent of $u$ and we have $n^{d-1} /C'$ disjoints spherical caps of radius $C/n$ in $S^{d-1}$. Since $|E_n| \leq C n^{d-1}$, we get
$$
\sum_{e \in E} \frac{\mathbb P (e \in \Pi)^2 }{\ell(e)} = \sum_n \sum_{e \in E_n} \mathbb P (e \in \Pi)^2 \frac{ n^{d-1}}{\psi(n)} \leq  \sum_n C n^{d-1} \cdot \left(\frac{C'}{n^{d-1}} \right)^2 \cdot \frac{n^{d-1}}{\psi(n)}. 
$$
The conclusion follows.\end{proof}

We can also look for an analog of Theorem \ref{prop:shorten}  in our new setting. 

\begin{thm}
\label{prop:shorten2}
Let $d\geq 3$, $X, E, (E_n)_{n \in \mathbb N}$ and $(\ell(e))_{e \in E}$ be as in Theorem \ref{thm:BHS-extended}. Let $(\psi(n))_n$ be a positive sequence such that $\sum_n \psi(n)^{-1} = \infty$. For some $\alpha>0$ and all $n \in \mathbb N$, let $K(n) = \lceil n ( \log n )^\alpha \rceil$.  Finally, for $n \geq 1$, we denote by $A_n \subset E_n$ the set of $e \in  E_n$ such that 
$$
\ell_X( e)   > \frac{\psi(n)}{n^{d-1}}.
$$
The following holds: 
\begin{enumerate}[(i)]
\item If $\alpha \leq 1 $ and $ \psi(n) + |A_n| \leq K(n)$ for all $n$ large enough, then $X \in O_G$.

\item If $\alpha >  1$, there exists $A \subset E$ with $|A \cap E_n| \leq K(n)$ such that if $\ell(e) = 1$ for all $e \in A$ then $X \notin O_G$.

\item There exist $p > 0 $ and $A \subset E$  with $|A \cap E_n| \geq p |E_n |$ for all $n$ large enough such that, if $\limsup_n \psi(n+1)/\psi(n) < \infty$ and $A_n \subset A \cap E_n$, then $X \in O_G$.
\end{enumerate}
\end{thm}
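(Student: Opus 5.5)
In all three parts we first use Theorem \ref{thm:graph} together with the graph version of Kanai's theorem \cite[Theorem 2.17]{LyonsPeres}, exactly as in the proof of Theorem \ref{thm:BHS-extended}. This reduces the type of $X$ to the type of the random walk on $\mathbb Z^d$ with conductances $(\ell(e))_{e\in E}$. We then copy the three arguments from the proof of Theorem \ref{prop:shorten}, adapting the combinatorics to the lattice.

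For (i), apply the Nash-Williams criterion (Theorem \ref{thm:NW}) with $\Pi_n=E_n$. As observed in the proof of Theorem \ref{thm:BHS-extended}, $E_n$ is a cutset and $|E_n|\le Cn^{d-1}$. Since $\sup_e\ell(e)<\infty$, we get
\[
\ell(E_n)\le Cn^{d-1}\cdot\frac{\psi(n)}{n^{d-1}}+C|A_n|\le C'K(n).
\]
For $\alpha\le1$ we have $\sum_n K(n)^{-1}=\infty$, which gives recurrence and hence $X\in O_G$.

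For (ii), embed in $\mathbb Z^d$ a copy of the subtree $T$ built in the proof of Theorem \ref{prop:shorten}(ii) and set $A=E(T)$. After $h$ branchings, $T$ consists of $2^h$ straight lattice segments of length $L_h=\theta 2^h/h^\alpha$, laid out as a binary branching structure.
\begin{itemize}
\item[$\bullet$] For $d\ge3$ there is plenty of room: place the segments along rays spreading in different directions so that they stay disjoint. Two segments of the same generation start at the same point, so they are separated by a fixed angle and diverge linearly.
\item[$\bullet$] Radial distance grows like the depth in $T$, so each level set $E_n$ meets only boundedly many segments per generation active at radius $n$. The number of active generations is $O(\log n)$ and the number of segments alive at radius $\sim n$ is $\lesssim n(\log n)^\alpha$. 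One has to check that this matches the count in Theorem \ref{prop:shorten}(ii), possibly after adjusting $\theta$ and $n_1$, so that $|A\cap E_n|\le K(n)$.
\item[$\bullet$] By Rayleigh's principle (Theorem \ref{thm:R}) and the series law, $T$ with unit conductances reduces to a binary tree with conductances $1/L_k$ at depth $k$. This is transient because $\sum_k k^\alpha/ 2^k\cdot 2^k/k^{2\alpha}$, i.e. $\sum_k k^{-\alpha}$, is finite for $\alpha>1$.
\end{itemize}
The main obstacle is this embedding: keeping the segments disjoint while controlling their count in each spherical shell. My first attempt would be a dyadic-angle construction on the sphere $S^{d-1}$, exploiting $d-1\ge2$.

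For (iii), take a perfect matching $M$ of $\mathbb Z^d$, namely all edges $\{x,x+e_1\}$ with $x_1$ even. Every shell contains a positive fraction of these edges, which gives $p>0$. Any infinite path crossing a thick shell $S_n$ must use two consecutive edges of $\bigcup_{j=n}^{n+c}E_j$, and at most one of them lies in $M$. So $\Pi_n=\bigl(\bigcup_{j\in S_n}E_j\bigr)\setminus M$ are disjoint cutsets, with $\ell(\Pi_n)\le C\psi(n)$ by $\limsup\psi(n+1)/\psi(n)<\infty$. Split into residues mod $c+1$ and choose a residue class along which $\sum\psi^{-1}$ diverges. Nash-Williams then gives recurrence.
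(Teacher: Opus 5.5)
Part (i) is exactly the paper's argument. Part (iii) is the paper's matching-plus-Nash--Williams argument, with the random matching replaced by the explicit perfect matching $M=\{\{x,x+e_1\}: x_1 \text{ even}\}$. The paper itself remarks that a deterministic matching would work. Your window argument is correct: midpoints of adjacent edges are at distance at most $1$, so already $E_N\cup E_{N+1}$ forces two adjacent edges on every infinite simple path from the base point. The one unproved claim is that every shell contains a fixed fraction of $M$. It needs a short count. For instance, if $x'\in\mathbb Z^{d-1}$ satisfies $0.9n\le|x'|\le 0.95n$, the line $\{(t,x'):t\in\mathbb R\}$ meets $\{n-1\le|y|<n\}$ in an interval of length $>2$ for large $n$. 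That interval therefore contains a midpoint of an $M$-edge, which gives $|M\cap E_n|\gtrsim n^{d-1}\gtrsim|E_n|$. The paper's random matching gets this for free, since $\mathbb P(e\in M)=q$ for every edge; it then needs only Chebyshev and Borel--Cantelli.

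The genuine gap is in (ii). Its entire content is the existence of one edge set $A$ that is transient with unit conductances and satisfies $|A\cap E_n|\le K(n)$ in every shell, and you do not construct it. You leave both the disjointness of the embedded segments and the shell count as ``one has to check''.

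The mechanism you do give, siblings leaving a branch point at a fixed angle, only separates siblings from each other. It says nothing about the $\asymp 2^h$ segments (of order $n(\log n)^\alpha$) from different subtrees, which must all cross the shell at radius $n\asymp n_h$ without meeting. The dyadic-angle construction you mention could handle this, by confining each generation-$h$ subtree to its own cone of angular size $\asymp 2^{-h/(d-1)}$ over $S^{d-1}$. But then several things remain to be done:
\begin{itemize}
\item The branching angle at generation $h$ must be $\asymp 2^{-h/(d-1)}$, not fixed.
\item Lattice paths must be routed disjointly near branch points, where siblings stay within $O(1)$ of each other over a length $\asymp 2^{h/(d-1)}$.
\item Each path must contribute $O(1)$ edges to each $E_n$. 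This is what has to be bounded, not the number of segments met by $E_n$, which is all of the active ones.
\item $\theta$ must be taken large to absorb the constants into $K(n)$. This is harmless, since the reduced tree still has $\sum_k L_k2^{-k}=\theta\sum_k k^{-\alpha}<\infty$.
\end{itemize}
None of this is carried out.

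The paper avoids the embedding problem entirely. It takes $A$ to be the edges inside the thickened ray $U=\{(n+n_1,k_2,\dots,k_d): n\ge 0,\ |k_i|\le \epsilon K(n)^{1/(d-1)}\}$, whose cross-section at distance $n$ has $\asymp K(n)$ vertices. Transience for $\alpha>1$, i.e.\ $\sum_n K(n)^{-1}<\infty$, comes from T.~Lyons's criterion \cite[Section 6]{Lyons83}. The bound $|A\cap E_n|\le K(n)$ is then a direct count, with extra care for $d=3$, where a unit shell meets several cross-sections of $U$.
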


\begin{proof}
The proof is very similar to the proof of Theorem \ref{prop:shorten} . Statement (i) is an immediate consequence of Nash-Williams criterion. For Statement (iii), it is enough to consider a matching $M =A$ such that $|A \cap E_n| \geq p |E_n |$ for all $n$ large enough. We should prove the existence of such matching for $p$ small enough. We can use the probabilistic method to do this, that is produce a random matching $M$ which satisfies with positive probability the event: for all $n$ large enough, $|M \cap E_n| \geq p |E_n|$.

Let $(U_e)_{e \in E}$ be independent and uniformly distributed random variable on $[0,1]$. We build a random matching $M$ by putting $ e \in M$ if $U_e  > \max_{ f \in N_e} U_f$ where $N_e$ is the set of edges which share an incident vertex with $e$. Since there are $|N_e| = 2 (2d-1)$ such edges, we have $\mathbb P ( e \in M) = q$ with $q = 1/(1 + (2(2d-1))$ and $\mathbb P(\cdot)$ is our underlying probability measure. If $\mathbb E(\cdot)$ denotes the expectation and $m_n = |M \cap E_n| = \sum_{e \in E_n} 1 ( e \in M)$ we deduce that $\mathbb E[ m_n ] = q |E_n|$. We can also compute the variance of $m_n$: 
\begin{align*}
\mathrm{Var}(m_n) & = \mathbb E (m_n^2)  - (\mathbb E m_n)^2  \\
& = \sum_{e,f \in E_n} \left( \mathbb P ( e \in M , f \in M)  - \mathbb P ( e \in M ) \mathbb P (  f \in M) \right).
    \end{align*}
Note that if $N_e \cap N_f = \emptyset$ then the events $\{e \in M\}$  and $\{f \in M \}$ are independent. Hence, 
\begin{align*}
\mathrm{Var}(m_n) & = \sum_{e \in E_n} \sum_{f \in E_n : N_e \cap N_f \ne \emptyset} \left( \mathbb P ( e \in M , f \in M)  - \mathbb P ( e \in M ) \mathbb P (  f \in M) \right)\\
& \leq C |E_n|, 
\end{align*}
where $C  = | \{ f : N_e \cap N_f \ne \emptyset \} | \leq 2 (2d)^2$. From Bienaymé–Chebyshev inequality, we find
$$
\mathbb P \left( m_n \leq \frac{\mathbb E[ m_n ]}{2} \right) \leq \mathbb P \left( | m_n - \mathbb E[ m_n ] | \geq  \frac{\mathbb E[ m_n ]}{2} \right) \leq \frac{ \mathrm{Var}(m_n) } { (\mathbb E[ m_n ]/2 )^2} \leq \frac{4 C}{q^2 |E_n|}.
$$
Since $|E_n| \geq  n^{d-1} / C$ for some $C >0$ and $d\geq 3$, we get that 
$$
\sum_n \mathbb P \left( m_n \leq \frac{\mathbb E[ m_n ]}{2} \right)  < \infty.
$$
From Borel-Cantelli lemma, it follows that with probability one, for all $n$ large enough, $m_n = |M_n \cap E_n| \geq (q/2) |E_n|$. It concludes the proof of Statement (iii) with $p = q/2$.


We finally prove Statement (ii). For some $n_1 \in \mathbb N$ and $0 < \epsilon < 1$ to be defined later, we set $$U = \left\{ (n + n_1, k_2,\ldots, k_d) : n \geq 0 , | k_i | \leq  \epsilon K(n)^{\frac{1}{d-1}}   \right\} \subset \mathbb Z^d.$$ We define $A \subset E$ as the set of edges such that both endpoints are in $U$. From \cite[Section 6]{Lyons83} and Rayleigh's principle, Theorem \ref{thm:R}, if $\alpha >1$ and $\ell(e) = 1$ for all $e \in A$ then $X \notin O_G$.

It thus remains to check that $|A \cap E_n | \leq K(n)$ if $n_1$ large enough. To this end, note that the norm of $(n + n_1, k_2,\ldots, k_d)\in U$ is in the interval $I = [n+n_1, n + n_1 +  d  \epsilon K(n)^{2/(d-1)} / ( n +n_1) ]$. If $d \geq 4$, we  have $I \subset [n+n_1 , n+n_1 + 1)$ if $n_1$ is large enough. We then deduce that for $\epsilon = 1/4$,  $|A \cap E_{n+n_1}| \leq 2 \lfloor 2 \epsilon K(n+1)^{1/(d-1)} \rfloor ^{d-1}  \leq 2 ( 2 \epsilon ) ^{d-1} K(n+1) \leq K(n+n_1)$. Also, for $m < n_1$, $A \cap E_m = \emptyset$. It concludes the proof for $d \geq 4$.

If $d = 3$, the above interval $I$ is contained in $[n+n_1,n+n_1 + m )$ with $m = \lceil d \epsilon (\log n)^\alpha \rceil $. Since $A$ is flat in $\mathbb R^3$, if $n_1$ is large enough for any $ n \geq n_1$ and $e  \in A \cap E_n$ we have that $e + (k,0,\ldots,0) \notin E_n$ for all $k \geq 2$.  We thus deduce that  $|A \cap E_{n+n_1}| \leq  2 \lfloor 2 \epsilon K(n+m)^{1/2} \rfloor ^{2} \leq 2 (2 \epsilon)^2 K(n+m) $. If $\epsilon =1/8$ and $n_1$ is large enough, $2 (2 \epsilon)^2 K(n+m)  \leq K (n+n_1)$ as requested.
\end{proof}

\begin{rem}\label{rk:pm}
We have used in the proof of Theorem \ref{prop:shorten2}(iii) the probabilistic method. It could have been possible to design an explicit deterministic matching. An advantage of the probabilistic method is that it extends to any other sequences $(E_n)$ of cut-sets such that $ |E_n| \sim C n^{d-1}$ (for example, for $p \in [1,\infty]$, $E_n$ could be the subset of edges whose mid-point has $L^p$-norm in $[n-1,n)$).  
\end{rem}

\vline

\author{Charles Bordenave, Institut de Mathématiques de Marseille, CNRS \& Aix-Marseille Université,  \em{charles.bordenave@cnrs.fr}}

\vspace{3pt}
\author{Xinlong Dong, Department of Mathematics,  Kingsborough Community College, CUNY, {\em Xinlong.Dong@kbcc.cuny.edu}}

\vspace{3pt}

\address{Dragomir \v Sari\' c, Current address: Institute for Advanced Study, 1 Einstein Drive, Princeton, NJ 08540, USA, {\em dsaric@ias.edu}}

\address{Dragomir \v Sari\' c, PhD Program in Mathematics, The Graduate Center, CUNY, 365 Fifth Ave., N.Y., N.Y., 10016 and 
Department of Mathematics, Queens College, CUNY, 65--30 Kissena Blvd., Flushing, NY 11367, USA, {\em Dragomir.Saric@qc.cuny.edu}}

\end{document}